\documentclass[leqno,12pt]{article}
\usepackage{amsmath}
\usepackage[latin1]{inputenc}
\usepackage{epsfig}
\usepackage{amssymb}
\usepackage{supertabular}
\usepackage{graphicx}
\usepackage{color}

\setcounter{MaxMatrixCols}{10}

\setlength{\parskip}{2mm}
\textheight21cm
\textwidth 16.7cm
\hoffset-0.5cm
\voffset-2cm


\typeout{TCILATEX Macros for Scientific Word and Scientific WorkPlace 5.5 <06 Oct 2005>.}
\typeout{NOTICE:  This macro file is NOT proprietary and may be 
freely copied and distributed.}
\makeatletter

\ifx\pdfoutput\relax\let\pdfoutput=\undefined\fi
\newcount\msipdfoutput
\ifx\pdfoutput\undefined
\else
 \ifcase\pdfoutput
 \else 
    \msipdfoutput=1
    \ifx\paperwidth\undefined
    \else
      \ifdim\paperheight=0pt\relax
      \else
        \pdfpageheight\paperheight
      \fi
      \ifdim\paperwidth=0pt\relax
      \else
        \pdfpagewidth\paperwidth
      \fi
    \fi
  \fi  
\fi

%

%
\newcount\@hour\newcount\@minute\chardef\@x10\chardef\@xv60
\def\tcitime{
\def\@time{%
  \@minute\time\@hour\@minute\divide\@hour\@xv
  \ifnum\@hour<\@x 0\fi\the\@hour:%
  \multiply\@hour\@xv\advance\@minute-\@hour
  \ifnum\@minute<\@x 0\fi\the\@minute
  }}%


\def\x@hyperref#1#2#3{%
   \catcode`\~ = 12
   \catcode`\$ = 12
   \catcode`\_ = 12
   \catcode`\# = 12
   \catcode`\& = 12
   \catcode`\% = 12
   \y@hyperref{#1}{#2}{#3}%
}

\def\y@hyperref#1#2#3#4{%
   #2\ref{#4}#3
   \catcode`\~ = 13
   \catcode`\$ = 3
   \catcode`\_ = 8
   \catcode`\# = 6
   \catcode`\& = 4
   \catcode`\% = 14
}

\@ifundefined{hyperref}{\let\hyperref\x@hyperref}{}
\@ifundefined{msihyperref}{\let\msihyperref\x@hyperref}{}

\@ifundefined{qExtProgCall}{\def\qExtProgCall#1#2#3#4#5#6{\relax}}{}
%
%
%
%
\def\QCTOpt[#1]#2{%
  \def\QCTOptB{#1}
  \def\QCTOptA{#2}
}
\def\QCTNOpt#1{%
  \def\QCTOptA{#1}
  \let\QCTOptB\empty
}
\def\Qct{%
  \@ifnextchar[{%
    \QCTOpt}{\QCTNOpt}
}
\def\QCBOpt[#1]#2{%
  \def\QCBOptB{#1}%
  \def\QCBOptA{#2}%
}
\def\QCBNOpt#1{%
  \def\QCBOptA{#1}%
  \let\QCBOptB\empty
}
\def\Qcb{%
  \@ifnextchar[{%
    \QCBOpt}{\QCBNOpt}%
}
\def\PrepCapArgs{%
  \ifx\QCBOptA\empty
    \ifx\QCTOptA\empty
      {}%
    \else
      \ifx\QCTOptB\empty
        {\QCTOptA}%
      \else
        [\QCTOptB]{\QCTOptA}%
      \fi
    \fi
  \else
    \ifx\QCBOptA\empty
      {}%
    \else
      \ifx\QCBOptB\empty
        {\QCBOptA}%
      \else
        [\QCBOptB]{\QCBOptA}%
      \fi
    \fi
  \fi
}
\newcount\GRAPHICSTYPE
\GRAPHICSTYPE=\z@
\def\GRAPHICSPS#1{%
 \ifcase\GRAPHICSTYPE
   \special{ps: #1}%
 \or
   \special{language "PS", include "#1"}%
 \fi
}%
%
%
%

\def\graffile#1#2#3#4{%
    \bgroup
	   \@inlabelfalse
       \leavevmode
       \@ifundefined{bbl@deactivate}{\def~{\string~}}{\activesoff}%
        \raise -#4 \BOXTHEFRAME{%
           \hbox to #2{\raise #3\hbox to #2{\null #1\hfil}}}%
    \egroup
}%
%
\def\draftbox#1#2#3#4{%
 \leavevmode\raise -#4 \hbox{%
  \frame{\rlap{\protect\tiny #1}\hbox to #2%
   {\vrule height#3 width\z@ depth\z@\hfil}%
  }%
 }%
}%
\newcount\@msidraft
\@msidraft=\z@
\let\nographics=\@msidraft
\newif\ifwasdraft
\wasdraftfalse

\def\GRAPHIC#1#2#3#4#5{%
   \ifnum\@msidraft=\@ne\draftbox{#2}{#3}{#4}{#5}%
   \else\graffile{#1}{#3}{#4}{#5}%
   \fi
}
\def\addtoLaTeXparams#1{%
    \edef\LaTeXparams{\LaTeXparams #1}}%
%

\newif\ifBoxFrame \BoxFramefalse
\newif\ifOverFrame \OverFramefalse
\newif\ifUnderFrame \UnderFramefalse

\def\BOXTHEFRAME#1{%
   \hbox{%
      \ifBoxFrame
         \frame{#1}%
      \else
         {#1}%
      \fi
   }%
}

\def\doFRAMEparams#1{\BoxFramefalse\OverFramefalse\UnderFramefalse\readFRAMEparams#1\end}%
\def\readFRAMEparams#1{%
 \ifx#1\end%
  \let\next=\relax
  \else
  \ifx#1i\dispkind=\z@\fi
  \ifx#1d\dispkind=\@ne\fi
  \ifx#1f\dispkind=\tw@\fi
  \ifx#1t\addtoLaTeXparams{t}\fi
  \ifx#1b\addtoLaTeXparams{b}\fi
  \ifx#1p\addtoLaTeXparams{p}\fi
  \ifx#1h\addtoLaTeXparams{h}\fi
  \ifx#1X\BoxFrametrue\fi
  \ifx#1O\OverFrametrue\fi
  \ifx#1U\UnderFrametrue\fi
  \ifx#1w
    \ifnum\@msidraft=1\wasdrafttrue\else\wasdraftfalse\fi
    \@msidraft=\@ne
  \fi
  \let\next=\readFRAMEparams
  \fi
 \next
 }%
%

\def\IFRAME#1#2#3#4#5#6{%
      \bgroup
      \let\QCTOptA\empty
      \let\QCTOptB\empty
      \let\QCBOptA\empty
      \let\QCBOptB\empty
      #6%
      \parindent=0pt
      \leftskip=0pt
      \rightskip=0pt
      \setbox0=\hbox{\QCBOptA}%
      \@tempdima=#1\relax
      \ifOverFrame
          \typeout{This is not implemented yet}%
          \show\HELP
      \else
         \ifdim\wd0>\@tempdima
            \advance\@tempdima by \@tempdima
            \ifdim\wd0 >\@tempdima
               \setbox1 =\vbox{%
                  \unskip\hbox to \@tempdima{\hfill\GRAPHIC{#5}{#4}{#1}{#2}{#3}\hfill}%
                  \unskip\hbox to \@tempdima{\parbox[b]{\@tempdima}{\QCBOptA}}%
               }%
               \wd1=\@tempdima
            \else
               \textwidth=\wd0
               \setbox1 =\vbox{%
                 \noindent\hbox to \wd0{\hfill\GRAPHIC{#5}{#4}{#1}{#2}{#3}\hfill}\\%
                 \noindent\hbox{\QCBOptA}%
               }%
               \wd1=\wd0
            \fi
         \else
            \ifdim\wd0>0pt
              \hsize=\@tempdima
              \setbox1=\vbox{%
                \unskip\GRAPHIC{#5}{#4}{#1}{#2}{0pt}%
                \break
                \unskip\hbox to \@tempdima{\hfill \QCBOptA\hfill}%
              }%
              \wd1=\@tempdima
           \else
              \hsize=\@tempdima
              \setbox1=\vbox{%
                \unskip\GRAPHIC{#5}{#4}{#1}{#2}{0pt}%
              }%
              \wd1=\@tempdima
           \fi
         \fi
         \@tempdimb=\ht1
         \advance\@tempdimb by -#2
         \advance\@tempdimb by #3
         \leavevmode
         \raise -\@tempdimb \hbox{\box1}%
      \fi
      \egroup%
}%
%
\def\DFRAME#1#2#3#4#5{%
  \vspace\topsep
  \hfil\break
  \bgroup
     \leftskip\@flushglue
	 \rightskip\@flushglue
	 \parindent\z@
	 \parfillskip\z@skip
     \let\QCTOptA\empty
     \let\QCTOptB\empty
     \let\QCBOptA\empty
     \let\QCBOptB\empty
	 \vbox\bgroup
        \ifOverFrame 
           #5\QCTOptA\par
        \fi
        \GRAPHIC{#4}{#3}{#1}{#2}{\z@}%
        \ifUnderFrame 
           \break#5\QCBOptA
        \fi
	 \egroup
  \egroup
  \vspace\topsep
  \break
}%
%
\def\FFRAME#1#2#3#4#5#6#7{%
  \@ifundefined{floatstyle}
    {
     \begin{figure}[#1]%
    }
    {
	 \ifx#1h
      \begin{figure}[H]%
	 \else
      \begin{figure}[#1]%
	 \fi
	}
  \let\QCTOptA\empty
  \let\QCTOptB\empty
  \let\QCBOptA\empty
  \let\QCBOptB\empty
  \ifOverFrame
    #4
    \ifx\QCTOptA\empty
    \else
      \ifx\QCTOptB\empty
        \caption{\QCTOptA}%
      \else
        \caption[\QCTOptB]{\QCTOptA}%
      \fi
    \fi
    \ifUnderFrame\else
      \label{#5}%
    \fi
  \else
    \UnderFrametrue%
  \fi
  \begin{center}\GRAPHIC{#7}{#6}{#2}{#3}{\z@}\end{center}%
  \ifUnderFrame
    #4
    \ifx\QCBOptA\empty
      \caption{}%
    \else
      \ifx\QCBOptB\empty
        \caption{\QCBOptA}%
      \else
        \caption[\QCBOptB]{\QCBOptA}%
      \fi
    \fi
    \label{#5}%
  \fi
  \end{figure}%
 }%
%
%
%
%
%
\newcount\dispkind%

\def\makeactives{
  \catcode`\"=\active
  \catcode`\;=\active
  \catcode`\:=\active
  \catcode`\'=\active
  \catcode`\~=\active
}
\bgroup
   \makeactives
   \gdef\activesoff{%
      \def"{\string"}%
      \def;{\string;}%
      \def:{\string:}%
      \def'{\string'}%
      \def~{\string~}%
    }
\egroup

\def\FRAME#1#2#3#4#5#6#7#8{%
 \bgroup
 \ifnum\@msidraft=\@ne
   \wasdrafttrue
 \else
   \wasdraftfalse%
 \fi
 \def\LaTeXparams{}%
 \dispkind=\z@
 \def\LaTeXparams{}%
 \doFRAMEparams{#1}%
 \ifnum\dispkind=\z@\IFRAME{#2}{#3}{#4}{#7}{#8}{#5}\else
  \ifnum\dispkind=\@ne\DFRAME{#2}{#3}{#7}{#8}{#5}\else
   \ifnum\dispkind=\tw@
    \edef\@tempa{\noexpand\FFRAME{\LaTeXparams}}%
    \@tempa{#2}{#3}{#5}{#6}{#7}{#8}%
    \fi
   \fi
  \fi
  \ifwasdraft\@msidraft=1\else\@msidraft=0\fi{}%
  \egroup
 }%
%

\def\TEXUX#1{"texux"}

%
%
%
%
%
%
%
%
\def\func#1{\mathop{\rm #1}\nolimits}%
%

%
\long\def\QQQ#1#2{%
     \long\expandafter\def\csname#1\endcsname{#2}}%
\@ifundefined{QTP}{\def\QTP#1{}}{}
\@ifundefined{QEXCLUDE}{\def\QEXCLUDE#1{}}{}
\@ifundefined{Qlb}{}{}
\@ifundefined{Qlt}{}{}
\long\def\QQA#1#2{}%
\def\QTR#1#2{{\csname#1\endcsname {#2}}}%

%
%
\def\EXPAND#1[#2]#3{}%
\def\NOEXPAND#1[#2]#3{}%
\def\LaTeXparent#1{}%
\def\ChildStyles#1{}%
\def\ChildDefaults#1{}%
\def\QTagDef#1#2#3{}%

\@ifundefined{correctchoice}{}{}
\@ifundefined{HTML}{\def\HTML#1{\relax}}{}
\@ifundefined{TCIIcon}{\def\TCIIcon#1#2#3#4{\relax}}{}
\if@compatibility
  \typeout{Not defining UNICODE  U or CustomNote commands for LaTeX 2.09.}
\else
  \providecommand{\UNICODE}[2][]{\protect\rule{.1in}{.1in}}
  \providecommand{\U}[1]{\protect\rule{.1in}{.1in}}
  
\fi

\@ifundefined{lambdabar}{
      
   }{}

%
\@ifundefined{StyleEditBeginDoc}{}{}
%
\def\QQfnmark#1{\footnotemark}

%
%
\@ifundefined{TCIMAKEINDEX}{}{\makeindex}%
%
\@ifundefined{abstract}{%
 \def\abstract{%
  \if@twocolumn
   \section*{Abstract (Not appropriate in this style!)}%
   \else \small 
   \begin{center}{\bf Abstract\vspace{-.5em}\vspace{\z@}}\end{center}%
   \quotation 
   \fi
  }%
 }{%
 }%
\@ifundefined{endabstract}{\def\endabstract
  {\if@twocolumn\else\endquotation\fi}}{}%
\@ifundefined{maketitle}{\def\maketitle#1{}}{}%
\@ifundefined{affiliation}{\def\affiliation#1{}}{}%
\@ifundefined{proof}{}{}%
\@ifundefined{endproof}{}{}%
\@ifundefined{newfield}{\def\newfield#1#2{}}{}%
\@ifundefined{chapter}{\def\chapter#1{\par(Chapter head:)#1\par }%
 \newcount\c@chapter}{}%
\@ifundefined{part}{\def\part#1{\par(Part head:)#1\par }}{}%
\@ifundefined{section}{\def\section#1{\par(Section head:)#1\par }}{}%
\@ifundefined{subsection}{\def\subsection#1%
 {\par(Subsection head:)#1\par }}{}%
\@ifundefined{subsubsection}{\def\subsubsection#1%
 {\par(Subsubsection head:)#1\par }}{}%
\@ifundefined{paragraph}{\def\paragraph#1%
 {\par(Subsubsubsection head:)#1\par }}{}%
\@ifundefined{subparagraph}{\def\subparagraph#1%
 {\par(Subsubsubsubsection head:)#1\par }}{}%
\@ifundefined{therefore}{}{}%
\@ifundefined{backepsilon}{}{}%
\@ifundefined{yen}{}{}%
\@ifundefined{registered}{%
   \def\registered{\relax\ifmmode{}\r@gistered
                    \else$\m@th\r@gistered$\fi}%
 \def\r@gistered{^{\ooalign
  {\hfil\raise.07ex\hbox{$\scriptstyle\rm\text{R}$}\hfil\crcr
  \mathhexbox20D}}}}{}%
\@ifundefined{Eth}{}{}%
\@ifundefined{eth}{}{}%
\@ifundefined{Thorn}{}{}%
\@ifundefined{thorn}{}{}%
%
\@ifundefined{degree}{}{}%
%
\newdimen\theight
\@ifundefined{Column}{\def\Column{%
 \vadjust{\setbox\z@=\hbox{\scriptsize\quad\quad tcol}%
  \theight=\ht\z@\advance\theight by \dp\z@\advance\theight by \lineskip
  \kern -\theight \vbox to \theight{%
   \rightline{\rlap{\box\z@}}%
   \vss
   }%
  }%
 }}{}%
\@ifundefined{qed}{\def\qed{%
 \ifhmode\unskip\nobreak\fi\ifmmode\ifinner\else\hskip5\p@\fi\fi
 \hbox{\hskip5\p@\vrule width4\p@ height6\p@ depth1.5\p@\hskip\p@}%
 }}{}%
\@ifundefined{cents}{}{}%
\@ifundefined{tciLaplace}{}{}%
\@ifundefined{tciFourier}{}{}%
\@ifundefined{textcurrency}{}{}%
\@ifundefined{texteuro}{}{}%
\@ifundefined{euro}{}{}%
\@ifundefined{textfranc}{}{}%
\@ifundefined{textlira}{}{}%
\@ifundefined{textpeseta}{}{}%
\@ifundefined{miss}{\def\miss{\hbox{\vrule height2\p@ width 2\p@ depth\z@}}}{}%
\@ifundefined{vvert}{}{}
\@ifundefined{tcol}{\def\tcol#1{{\baselineskip=6\p@ \vcenter{#1}} \Column}}{}%
\@ifundefined{dB}{}{}
\@ifundefined{mB}{}{}
\@ifundefined{nB}{}{}
\@ifundefined{note}{}{}%
\def\newfmtname{LaTeX2e}
%
\ifx\fmtname\newfmtname
  \DeclareOldFontCommand{\rm}{\normalfont\rmfamily}{\mathrm}
  \DeclareOldFontCommand{\sf}{\normalfont\sffamily}{\mathsf}
  \DeclareOldFontCommand{\tt}{\normalfont\ttfamily}{\mathtt}
  \DeclareOldFontCommand{\bf}{\normalfont\bfseries}{\mathbf}
  \DeclareOldFontCommand{\it}{\normalfont\itshape}{\mathit}
  \DeclareOldFontCommand{\sl}{\normalfont\slshape}{\@nomath\sl}
  \DeclareOldFontCommand{\sc}{\normalfont\scshape}{\@nomath\sc}
\fi

%

\def\alpha{{\Greekmath 010B}}%
\def\beta{{\Greekmath 010C}}%
\def\gamma{{\Greekmath 010D}}%
\def\delta{{\Greekmath 010E}}%
\def\epsilon{{\Greekmath 010F}}%
\def\zeta{{\Greekmath 0110}}%
\def\eta{{\Greekmath 0111}}%
\def\theta{{\Greekmath 0112}}%
\def\iota{{\Greekmath 0113}}%
\def\kappa{{\Greekmath 0114}}%
\def\lambda{{\Greekmath 0115}}%
\def\mu{{\Greekmath 0116}}%
\def\nu{{\Greekmath 0117}}%
\def\xi{{\Greekmath 0118}}%
\def\pi{{\Greekmath 0119}}%
\def\rho{{\Greekmath 011A}}%
\def\sigma{{\Greekmath 011B}}%
\def\tau{{\Greekmath 011C}}%
\def\upsilon{{\Greekmath 011D}}%
\def\phi{{\Greekmath 011E}}%
\def\chi{{\Greekmath 011F}}%
\def\psi{{\Greekmath 0120}}%
\def\omega{{\Greekmath 0121}}%
\def\varepsilon{{\Greekmath 0122}}%
\def\vartheta{{\Greekmath 0123}}%
\def\varpi{{\Greekmath 0124}}%
\def\varrho{{\Greekmath 0125}}%
\def\varsigma{{\Greekmath 0126}}%
\def\varphi{{\Greekmath 0127}}%

\def\nabla{{\Greekmath 0272}}
\def\FindBoldGroup{%
   {\setbox0=\hbox{$\mathbf{x\global\edef\theboldgroup{\the\mathgroup}}$}}%
}

\def\Greekmath#1#2#3#4{%
    \if@compatibility
        \ifnum\mathgroup=\symbold
           \mathchoice{\mbox{\boldmath$\displaystyle\mathchar"#1#2#3#4$}}%
                      {\mbox{\boldmath$\textstyle\mathchar"#1#2#3#4$}}%
                      {\mbox{\boldmath$\scriptstyle\mathchar"#1#2#3#4$}}%
                      {\mbox{\boldmath$\scriptscriptstyle\mathchar"#1#2#3#4$}}%
        \else
           \mathchar"#1#2#3#4%
        \fi 
    \else 
        \FindBoldGroup
        \ifnum\mathgroup=\theboldgroup 
           \mathchoice{\mbox{\boldmath$\displaystyle\mathchar"#1#2#3#4$}}%
                      {\mbox{\boldmath$\textstyle\mathchar"#1#2#3#4$}}%
                      {\mbox{\boldmath$\scriptstyle\mathchar"#1#2#3#4$}}%
                      {\mbox{\boldmath$\scriptscriptstyle\mathchar"#1#2#3#4$}}%
        \else
           \mathchar"#1#2#3#4%
        \fi     	    
	  \fi}

\newif\ifGreekBold  \GreekBoldfalse
\let\SAVEPBF=\pbf
\def\pbf{\GreekBoldtrue\SAVEPBF}%

\@ifundefined{theorem}{\newtheorem{theorem}{Theorem}}{}
\@ifundefined{lemma}{\newtheorem{lemma}[theorem]{Lemma}}{}
\@ifundefined{corollary}{\newtheorem{corollary}[theorem]{Corollary}}{}
\@ifundefined{conjecture}{}{}
\@ifundefined{proposition}{}{}
\@ifundefined{axiom}{}{}
\@ifundefined{remark}{\newtheorem{remark}{Remark}}{}
\@ifundefined{example}{}{}
\@ifundefined{exercise}{}{}
\@ifundefined{definition}{}{}

\@ifundefined{mathletters}{%
  \newcounter{equationnumber}  
  \def\mathletters{%
     \addtocounter{equation}{1}
     \edef\@currentlabel{\theequation}%
     \setcounter{equationnumber}{\c@equation}
     \setcounter{equation}{0}%
     \edef\theequation{\@currentlabel\noexpand\alph{equation}}%
  }
  
}{}

\@ifundefined{BibTeX}{%
    \def\BibTeX{{\rm B\kern-.05em{\sc i\kern-.025em b}\kern-.08em
                 T\kern-.1667em\lower.7ex\hbox{E}\kern-.125emX}}}{}%
\@ifundefined{AmS}%
    {\def\AmS{{\protect\usefont{OMS}{cmsy}{m}{n}%
                A\kern-.1667em\lower.5ex\hbox{M}\kern-.125emS}}}{}%
\@ifundefined{AmSTeX}{}{}%
%

\def\@@eqncr{\let\@tempa\relax
    \ifcase\@eqcnt \def\@tempa{& & &}\or \def\@tempa{& &}%
      \else \def\@tempa{&}\fi
     \@tempa
     \if@eqnsw
        \iftag@
           \@taggnum
        \else
           \@eqnnum\stepcounter{equation}%
        \fi
     \fi
     \global\tag@false
     \global\@eqnswtrue
     \global\@eqcnt\z@\cr}

\def\TCItag{\@ifnextchar*{\@TCItagstar}{\@TCItag}}
\def\@TCItag#1{%
    \global\tag@true
    \global\def\@taggnum{(#1)}%
    \global\def\@currentlabel{#1}}
\def\@TCItagstar*#1{%
    \global\tag@true
    \global\def\@taggnum{#1}%
    \global\def\@currentlabel{#1}}
%
%
%
%
%
%
%
%
%
%
%
%
%
%
%
%
%
%
%

\def\tint{\msi@int\textstyle\int}%
\def\tiint{\msi@int\textstyle\iint}%
\def\tiiint{\msi@int\textstyle\iiint}%
\def\tiiiint{\msi@int\textstyle\iiiint}%
\def\tidotsint{\msi@int\textstyle\idotsint}%
\def\toint{\msi@int\textstyle\oint}%

%
%
%
%
%
%
%
%
%
%
%
%
%
%
%

\newtoks\temptoksa
\newtoks\temptoksb
\newtoks\temptoksc

\def\msi@int#1#2{%
 \def\@temp{{#1#2\the\temptoksc_{\the\temptoksa}^{\the\temptoksb}}}%
 \futurelet\@nextcs
 \@int
}

\def\@int{%
   \ifx\@nextcs\limits
      \typeout{Found limits}%
      \temptoksc={\limits}%
	  \let\@next\@intgobble%
   \else\ifx\@nextcs\nolimits
      \typeout{Found nolimits}%
      \temptoksc={\nolimits}%
	  \let\@next\@intgobble%
   \else
      \typeout{Did not find limits or no limits}%
      \temptoksc={}%
      \let\@next\msi@limits%
   \fi\fi
   \@next   
}%

\def\@intgobble#1{%
   \typeout{arg is #1}%
   \msi@limits
}

\def\msi@limits{%
   \temptoksa={}%
   \temptoksb={}%
   \@ifnextchar_{\@limitsa}{\@limitsb}%
}

\def\@limitsa_#1{%
   \temptoksa={#1}%
   \@ifnextchar^{\@limitsc}{\@temp}%
}

\def\@limitsb{%
   \@ifnextchar^{\@limitsc}{\@temp}%
}

\def\@limitsc^#1{%
   \temptoksb={#1}%
   \@ifnextchar_{\@limitsd}{\@temp}%
}

\def\@limitsd_#1{%
   \temptoksa={#1}%
   \@temp
}

\def\dint{\msi@int\displaystyle\int}%
\def\diint{\msi@int\displaystyle\iint}%
\def\diiint{\msi@int\displaystyle\iiint}%
\def\diiiint{\msi@int\displaystyle\iiiint}%
\def\didotsint{\msi@int\displaystyle\idotsint}%
\def\doint{\msi@int\displaystyle\oint}%

\def\dsum{\mathop{\displaystyle \sum }}%

\if@compatibility\else
  \RequirePackage{amsmath}
\fi

\def\ExitTCILatex{\makeatother }

\bgroup
\ifx\ds@amstex\relax
   \message{amstex already loaded}\aftergroup\ExitTCILatex
\else
   \@ifpackageloaded{amsmath}%
      {\if@compatibility\message{amsmath already loaded}\fi\aftergroup\ExitTCILatex}
      {}
   \@ifpackageloaded{amstex}%
      {\if@compatibility\message{amstex already loaded}\fi\aftergroup\ExitTCILatex}
      {}
   \@ifpackageloaded{amsgen}%
      {\if@compatibility\message{amsgen already loaded}\fi\aftergroup\ExitTCILatex}
      {}
\fi
\egroup


\typeout{TCILATEX defining AMS-like constructs in LaTeX 2.09 COMPATIBILITY MODE}
%
%
\let\DOTSI\relax
\def\RIfM@{\relax\ifmmode}%
\def\FN@{\futurelet\next}%
\newcount\intno@
\def\iint{\DOTSI\intno@\tw@\FN@\ints@}%
\def\iiint{\DOTSI\intno@\thr@@\FN@\ints@}%
\def\iiiint{\DOTSI\intno@4 \FN@\ints@}%
\def\idotsint{\DOTSI\intno@\z@\FN@\ints@}%
\def\ints@{\findlimits@\ints@@}%
\newif\iflimtoken@
\newif\iflimits@
\def\findlimits@{\limtoken@true\ifx\next\limits\limits@true
 \else\ifx\next\nolimits\limits@false\else
 \limtoken@false\ifx\ilimits@\nolimits\limits@false\else
 \ifinner\limits@false\else\limits@true\fi\fi\fi\fi}%
\def\multint@{\int\ifnum\intno@=\z@\intdots@                          
 \else\intkern@\fi                                                    
 \ifnum\intno@>\tw@\int\intkern@\fi                                   
 \ifnum\intno@>\thr@@\int\intkern@\fi                                 
 \int}
\def\multintlimits@{\intop\ifnum\intno@=\z@\intdots@\else\intkern@\fi
 \ifnum\intno@>\tw@\intop\intkern@\fi
 \ifnum\intno@>\thr@@\intop\intkern@\fi\intop}%
\def\intic@{%
    \mathchoice{\hskip.5em}{\hskip.4em}{\hskip.4em}{\hskip.4em}}%
\def\negintic@{\mathchoice
 {\hskip-.5em}{\hskip-.4em}{\hskip-.4em}{\hskip-.4em}}%
\def\ints@@{\iflimtoken@                                              
 \def\ints@@@{\iflimits@\negintic@
   \mathop{\intic@\multintlimits@}\limits                             
  \else\multint@\nolimits\fi                                          
  \eat@}
 \else                                                                
 \def\ints@@@{\iflimits@\negintic@
  \mathop{\intic@\multintlimits@}\limits\else
  \multint@\nolimits\fi}\fi\ints@@@}%
\def\intkern@{\mathchoice{\!\!\!}{\!\!}{\!\!}{\!\!}}%
\def\plaincdots@{\mathinner{\cdotp\cdotp\cdotp}}%
\def\intdots@{\mathchoice{\plaincdots@}%
 {{\cdotp}\mkern1.5mu{\cdotp}\mkern1.5mu{\cdotp}}%
 {{\cdotp}\mkern1mu{\cdotp}\mkern1mu{\cdotp}}%
 {{\cdotp}\mkern1mu{\cdotp}\mkern1mu{\cdotp}}}%
%
%
%
\def\RIfM@{\relax\protect\ifmmode}
\def\text{\RIfM@\expandafter\text@\else\expandafter\mbox\fi}
\let\nfss@text\text
\def\text@#1{\mathchoice
   {\textdef@\displaystyle\f@size{#1}}%
   {\textdef@\textstyle\tf@size{\firstchoice@false #1}}%
   {\textdef@\textstyle\sf@size{\firstchoice@false #1}}%
   {\textdef@\textstyle \ssf@size{\firstchoice@false #1}}%
   \glb@settings}

\def\textdef@#1#2#3{\hbox{{%
                    \everymath{#1}%
                    \let\f@size#2\selectfont
                    #3}}}
\newif\iffirstchoice@
\firstchoice@true
%
%
\def\Let@{\relax\iffalse{\fi\let\\=\cr\iffalse}\fi}%
\def\vspace@{\def\vspace##1{\crcr\noalign{\vskip##1\relax}}}%
\def\multilimits@{\bgroup\vspace@\Let@
 \baselineskip\fontdimen10 \scriptfont\tw@
 \advance\baselineskip\fontdimen12 \scriptfont\tw@
 \lineskip\thr@@\fontdimen8 \scriptfont\thr@@
 \lineskiplimit\lineskip
 \vbox\bgroup\ialign\bgroup\hfil$\m@th\scriptstyle{##}$\hfil\crcr}%
\def\Sb{_\multilimits@}%
\def\endSb{\crcr\egroup\egroup\egroup}%
\def\Sp{^\multilimits@}%

%
%
%
\newdimen\ex@
\ex@.2326ex
\def\rightarrowfill@#1{$#1\m@th\mathord-\mkern-6mu\cleaders
 \hbox{$#1\mkern-2mu\mathord-\mkern-2mu$}\hfill
 \mkern-6mu\mathord\rightarrow$}%
\def\leftarrowfill@#1{$#1\m@th\mathord\leftarrow\mkern-6mu\cleaders
 \hbox{$#1\mkern-2mu\mathord-\mkern-2mu$}\hfill\mkern-6mu\mathord-$}%
\def\leftrightarrowfill@#1{$#1\m@th\mathord\leftarrow
\mkern-6mu\cleaders
 \hbox{$#1\mkern-2mu\mathord-\mkern-2mu$}\hfill
 \mkern-6mu\mathord\rightarrow$}%
\def\overrightarrow{\mathpalette\overrightarrow@}%
\def\overrightarrow@#1#2{\vbox{\ialign{##\crcr\rightarrowfill@#1\crcr
 \noalign{\kern-\ex@\nointerlineskip}$\m@th\hfil#1#2\hfil$\crcr}}}%

\def\overleftarrow{\mathpalette\overleftarrow@}%
\def\overleftarrow@#1#2{\vbox{\ialign{##\crcr\leftarrowfill@#1\crcr
 \noalign{\kern-\ex@\nointerlineskip}$\m@th\hfil#1#2\hfil$\crcr}}}%
\def\overleftrightarrow{\mathpalette\overleftrightarrow@}%
\def\overleftrightarrow@#1#2{\vbox{\ialign{##\crcr
   \leftrightarrowfill@#1\crcr
 \noalign{\kern-\ex@\nointerlineskip}$\m@th\hfil#1#2\hfil$\crcr}}}%
\def\underrightarrow{\mathpalette\underrightarrow@}%
\def\underrightarrow@#1#2{\vtop{\ialign{##\crcr$\m@th\hfil#1#2\hfil
  $\crcr\noalign{\nointerlineskip}\rightarrowfill@#1\crcr}}}%

\def\underleftarrow{\mathpalette\underleftarrow@}%
\def\underleftarrow@#1#2{\vtop{\ialign{##\crcr$\m@th\hfil#1#2\hfil
  $\crcr\noalign{\nointerlineskip}\leftarrowfill@#1\crcr}}}%
\def\underleftrightarrow{\mathpalette\underleftrightarrow@}%
\def\underleftrightarrow@#1#2{\vtop{\ialign{##\crcr$\m@th
  \hfil#1#2\hfil$\crcr
 \noalign{\nointerlineskip}\leftrightarrowfill@#1\crcr}}}%

\def\qopnamewl@#1{\mathop{\operator@font#1}\nlimits@}
\let\nlimits@\displaylimits
\def\setboxz@h{\setbox\z@\hbox}

\def\varlim@#1#2{\mathop{\vtop{\ialign{##\crcr
 \hfil$#1\m@th\operator@font lim$\hfil\crcr
 \noalign{\nointerlineskip}#2#1\crcr
 \noalign{\nointerlineskip\kern-\ex@}\crcr}}}}

 \def\rightarrowfill@#1{\m@th\setboxz@h{$#1-$}\ht\z@\z@
  $#1\copy\z@\mkern-6mu\cleaders
  \hbox{$#1\mkern-2mu\box\z@\mkern-2mu$}\hfill
  \mkern-6mu\mathord\rightarrow$}
\def\leftarrowfill@#1{\m@th\setboxz@h{$#1-$}\ht\z@\z@
  $#1\mathord\leftarrow\mkern-6mu\cleaders
  \hbox{$#1\mkern-2mu\copy\z@\mkern-2mu$}\hfill
  \mkern-6mu\box\z@$}

\def\projlim{\qopnamewl@{proj\,lim}}
\def\injlim{\qopnamewl@{inj\,lim}}
\def\varinjlim{\mathpalette\varlim@\rightarrowfill@}
\def\varprojlim{\mathpalette\varlim@\leftarrowfill@}
\def\varliminf{\mathpalette\varliminf@{}}
\def\varliminf@#1{\mathop{\underline{\vrule\@depth.2\ex@\@width\z@
   \hbox{$#1\m@th\operator@font lim$}}}}
\def\varlimsup{\mathpalette\varlimsup@{}}
\def\varlimsup@#1{\mathop{\overline
  {\hbox{$#1\m@th\operator@font lim$}}}}

%
%
%
%
%
%
\begingroup \catcode `|=0 \catcode `[= 1
\catcode`]=2 \catcode `\{=12 \catcode `\}=12
\catcode`\\=12 
|gdef|@alignverbatim#1\end{align}[#1|end[align]]
|gdef|@salignverbatim#1\end{align*}[#1|end[align*]]

|gdef|@alignatverbatim#1\end{alignat}[#1|end[alignat]]
|gdef|@salignatverbatim#1\end{alignat*}[#1|end[alignat*]]

|gdef|@xalignatverbatim#1\end{xalignat}[#1|end[xalignat]]
|gdef|@sxalignatverbatim#1\end{xalignat*}[#1|end[xalignat*]]

|gdef|@gatherverbatim#1\end{gather}[#1|end[gather]]
|gdef|@sgatherverbatim#1\end{gather*}[#1|end[gather*]]

|gdef|@gatherverbatim#1\end{gather}[#1|end[gather]]
|gdef|@sgatherverbatim#1\end{gather*}[#1|end[gather*]]

|gdef|@multilineverbatim#1\end{multiline}[#1|end[multiline]]
|gdef|@smultilineverbatim#1\end{multiline*}[#1|end[multiline*]]

|gdef|@arraxverbatim#1\end{arrax}[#1|end[arrax]]
|gdef|@sarraxverbatim#1\end{arrax*}[#1|end[arrax*]]

|gdef|@tabulaxverbatim#1\end{tabulax}[#1|end[tabulax]]
|gdef|@stabulaxverbatim#1\end{tabulax*}[#1|end[tabulax*]]

|endgroup

\def\align{\@verbatim \frenchspacing\@vobeyspaces \@alignverbatim
You are using the "align" environment in a style in which it is not defined.}

\@namedef{align*}{\@verbatim\@salignverbatim
You are using the "align*" environment in a style in which it is not defined.}
\expandafter\let\csname endalign*\endcsname =\endtrivlist

\def\alignat{\@verbatim \frenchspacing\@vobeyspaces \@alignatverbatim
You are using the "alignat" environment in a style in which it is not defined.}

\@namedef{alignat*}{\@verbatim\@salignatverbatim
You are using the "alignat*" environment in a style in which it is not defined.}
\expandafter\let\csname endalignat*\endcsname =\endtrivlist

\def\xalignat{\@verbatim \frenchspacing\@vobeyspaces \@xalignatverbatim
You are using the "xalignat" environment in a style in which it is not defined.}

\@namedef{xalignat*}{\@verbatim\@sxalignatverbatim
You are using the "xalignat*" environment in a style in which it is not defined.}
\expandafter\let\csname endxalignat*\endcsname =\endtrivlist

\def\gather{\@verbatim \frenchspacing\@vobeyspaces \@gatherverbatim
You are using the "gather" environment in a style in which it is not defined.}

\@namedef{gather*}{\@verbatim\@sgatherverbatim
You are using the "gather*" environment in a style in which it is not defined.}
\expandafter\let\csname endgather*\endcsname =\endtrivlist

\def\multiline{\@verbatim \frenchspacing\@vobeyspaces \@multilineverbatim
You are using the "multiline" environment in a style in which it is not defined.}

\@namedef{multiline*}{\@verbatim\@smultilineverbatim
You are using the "multiline*" environment in a style in which it is not defined.}
\expandafter\let\csname endmultiline*\endcsname =\endtrivlist

\def\arrax{\@verbatim \frenchspacing\@vobeyspaces \@arraxverbatim
You are using a type of "array" construct that is only allowed in AmS-LaTeX.}

\def\tabulax{\@verbatim \frenchspacing\@vobeyspaces \@tabulaxverbatim
You are using a type of "tabular" construct that is only allowed in AmS-LaTeX.}

\@namedef{arrax*}{\@verbatim\@sarraxverbatim
You are using a type of "array*" construct that is only allowed in AmS-LaTeX.}
\expandafter\let\csname endarrax*\endcsname =\endtrivlist

\@namedef{tabulax*}{\@verbatim\@stabulaxverbatim
You are using a type of "tabular*" construct that is only allowed in AmS-LaTeX.}
\expandafter\let\csname endtabulax*\endcsname =\endtrivlist


 \def\endequation{%
     \ifmmode\ifinner 
      \iftag@
        \addtocounter{equation}{-1} 
        $\hfil
           \displaywidth\linewidth\@taggnum\egroup \endtrivlist
        \global\tag@false
        \global\@ignoretrue   
      \else
        $\hfil
           \displaywidth\linewidth\@eqnnum\egroup \endtrivlist
        \global\tag@false
        \global\@ignoretrue 
      \fi
     \else   
      \iftag@
        \addtocounter{equation}{-1} 
        \eqno \hbox{\@taggnum}
        \global\tag@false%
        $$\global\@ignoretrue
      \else
        \eqno \hbox{\@eqnnum}
        $$\global\@ignoretrue
      \fi
     \fi\fi
 } 

 \newif\iftag@ \tag@false
 
 \def\TCItag{\@ifnextchar*{\@TCItagstar}{\@TCItag}}
 \def\@TCItag#1{%
     \global\tag@true
     \global\def\@taggnum{(#1)}%
     \global\def\@currentlabel{#1}}
 \def\@TCItagstar*#1{%
     \global\tag@true
     \global\def\@taggnum{#1}%
     \global\def\@currentlabel{#1}}

  \@ifundefined{tag}{
     \def\tag{\@ifnextchar*{\@tagstar}{\@tag}}
     \def\@tag#1{%
         \global\tag@true
         \global\def\@taggnum{(#1)}}
     \def\@tagstar*#1{%
         \global\tag@true
         \global\def\@taggnum{#1}}
  }{}

\def\binom#1#2{{#1 \choose #2}}%
%
%

\makeatother

\begin{document}

\title{ An $hp$ Finite Element Method for singularly perturbed transmission
problems in smooth domains}
\author{Serge Nicaise \\
Universit\'e de Valenciennes et du Hainaut Cambr\'esis\\
LAMAV, FR CNRS 2956, \\
Institut des Sciences et Techniques de Valenciennes\\
F-59313 - Valenciennes Cedex 9 France\\
Serge.Nicaise@univ-valenciennes.fr\\
\\
Christos Xenophontos\\
Department of Mathematics and Statistics\\
University of Cyprus\\
P.O. BOX 20537\\
Nicosia 1678 Cyprus\\
xenophontos@ucy.ac.cy\\
}
\maketitle

\begin{abstract}
We consider a two-dimensional singularly perturbed transmission problem with
two different diffusion coefficients, in a domain with smooth (analytic)
boundary. The solution will contain boundary layers only in the part of the
domain where the diffusion coefficient is high and interface layers along
the interface. Utilizing existing and newly derived regularity results for
the exact solution, we design a robust $hp$ finite element method for its
approximation. Under the assumption of analytic input data, we show that the
method converges at an e{xponential} rate, provided the mesh and polynomial
degree distribution are chosen appropriately. Numerical results illustrating
our theoretical findings are also included.
\end{abstract}

\textbf{2000 Mathematics Subject Classification:} 65N30

\textbf{Keywords and Phrases:} Transmission problems, boundary layers,
interface layer, $hp$ finite element method

\section{Introduction}

The approximation of singularly perturbed problems has retained the
attention of many authors in recent years. Let us mention \cite%
{melenk,miller:96,morton:96,roos:96,schwab,ss} and the references quoted
there. However, in all references quoted no analysis is carried out for
differential operators with piecewise constant or piecewise smooth
coefficients. On the other hand, in many real life applications, the
differential operators have such piecewise coefficients that may have a very
large discrepancy. In that case, the solution of the problem will contain
boundary layers near the exterior boundary (as usual) but will also contain
interface layers along the interface where the coefficients have a large
jump. We refer to \cite{mn} for the description of this phenomenon in one
and two dimensions and to \cite{nx} for several numerical methods for the
robust approximation of such problems in one-dimension.

The goal of the present paper is to extend certain results from \cite{nx} to
two-dimensions. \ In particular, we consider a singularly perturbed
transmission problem in a domain with analytic boundary. \ Under the
assumption of the data also being analytic, we provide an asymptotic
expansion for the solution (in the style of \cite{melenkB}) that provides
the necessary information for the design of a robust finite element method
that converges at an exponential rate as the degree $p$ of the approximating
polynomials is increased. The expansion of the solution includes an outer
(smooth) part, an inner (boundary layer) part, an interface layer and a
(smooth) remainder. \ \ The regularity of each compoment is studied and
known results from \cite{ms1} allow us to treat the outer and inner parts,
as well as the remainder (defined on one part of the domain). The results
obtained for the regularity of the interface layer (and the remainder
defined on the other part of the domain) are new and in line with those
reported in \cite{nx} for the one-dimensional analog of our model problem.
Our work closely follows what was done in \cite{ms} but also includes the
additional analysis for the interface layer.

The paper is organized as follows: In Section \ref{sect1} we present the
singularly perturbed problem and describe the typical phenomena. Section \ref%
{expansion} is devoted to the expansion of the solution of our model problem
into the parts mentioned above (i.e. outer, inner, interface and remainder).
The regularity of each component is also described in that section. Section {%
\ref{approx}} gives the main approximation result and in Section \ref%
{computations} we show the results of numerical computations illustrating
our theoretical findings. We end with some conclusions in Section \ref%
{conclusions}.

Throughout the paper the spaces $H^{s}(\Omega )$, with $s\geq 0$, are the
standard Sobolev spaces on the domain $\Omega \subset \mathbb{R}^{2}$, with
norm $\Vert \cdot \Vert _{s,\Omega }$ and semi-norm $|\cdot |_{s,\Omega }$.
The space $H_{0}^{1}(\Omega )$ is defined, as usual, by $H_{0}^{1}(\Omega
):=\{v\in H^{1}(\Omega ):\left. v\right\vert _{\partial \Omega }=0\}$. $%
L^{p}(\Omega )$, $p>1$, are the usual Lebesgue spaces with norm $\Vert \cdot
\Vert _{0,p,\Omega }$ (we drop the index $p$ for $p=2$). Finally, the
notation $A\lesssim B$ means the existence of a positive constant $C$, which
is independent of the quantities $A$ and $B$ under consideration and of the
parameter ${\varepsilon }$, such that $A\leq CB$.


\section{The model problem\label{sect1}}

\noindent Let $\Omega _{+}$ and $\Omega _{-}$ be smooth domains in $\mathbb{R%
}^{2}$, with respective boundaries $\partial \Omega _{+}$ and $\partial
\Omega _{-},$ such that $\partial \Omega _{+}\cap \partial \Omega
_{-}=\Sigma $; an example is shown in Figure 1 below. We assume that $%
\partial \Omega$ is an analytic curve, i.e. $\partial \Omega_{\pm}$ and $%
\Sigma$ are analytic curves. Moreover, we assume that $\partial \Omega_+
\backslash \Sigma$, as well as $\Sigma$ are connected. We will write $%
\Omega=\Omega _{+}\cup \Omega _{-}$, and for any function $u$ defined on $%
\Omega $ we will denote by $u_{+} $ (resp. $u_{-}$) the restriction of $u$
to $\Omega_{+}$ (resp. $\Omega _{-}$) and we will write $u\equiv \left(
u_{+},u_{-}\right)$.

\begin{figure}[h]
\setlength{\unitlength}{1cm}
\par
\begin{center}
\hspace{2cm} \psfig{figure=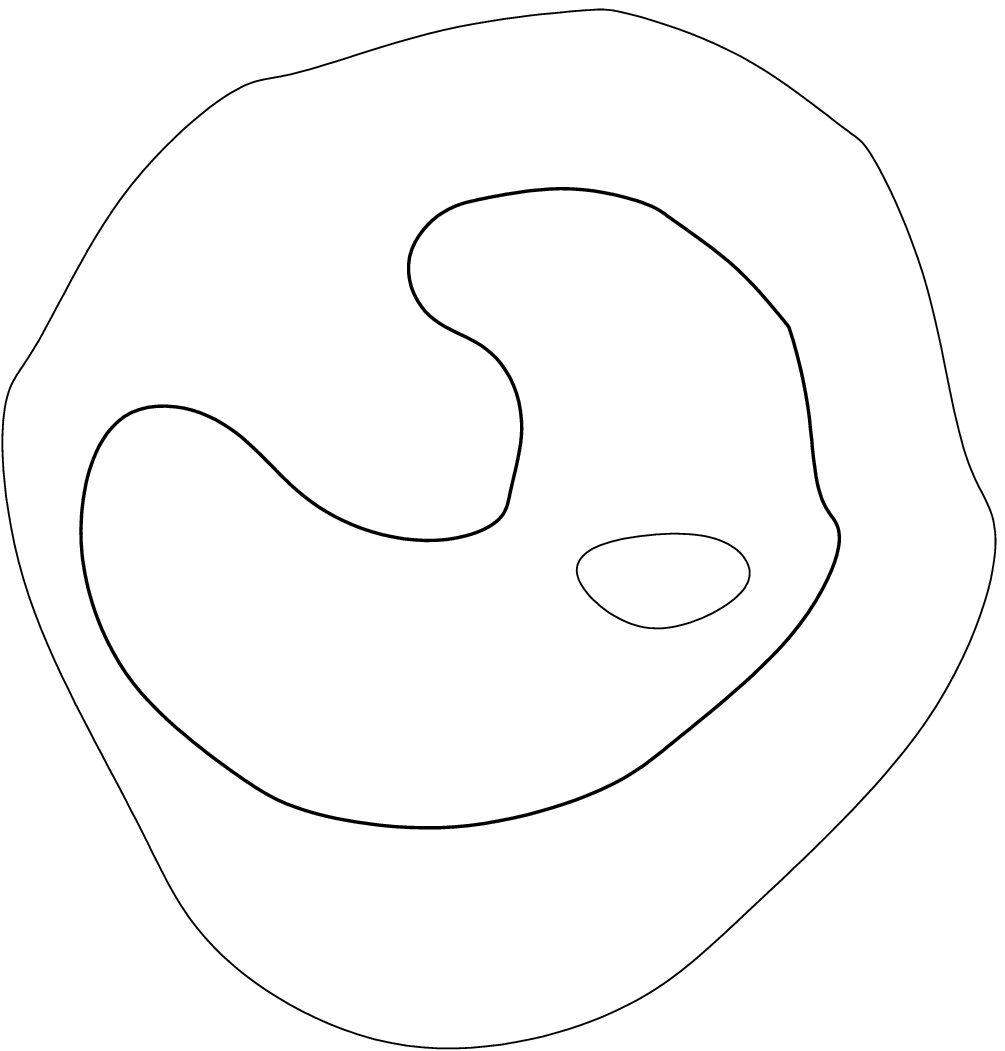,width=4cm} 
\begin{picture}(2,3)
\put (-2.5,1.25) {\shortstack[c] {$\Omega_{+}$}}
\put (-3.2,2.95) {\shortstack[c] {$\Omega_{-}$}}
\put (-1.2,3.25) {\shortstack[c] {$\Sigma$}}
\end{picture}
\end{center}
\par
\vspace{-0.5cm}
\caption{Example of the domains $\Omega_+$ and $\Omega_-$.}
\end{figure}

We consider the following singularly perturbed transmission problem: Find $%
u^{\varepsilon }=\left( u_{+}^{\varepsilon },u_{-}^{\varepsilon }\right) $
such that 
\begin{eqnarray}  \label{bvp}
-\varepsilon ^{2}\Delta u_{+}^{\varepsilon }+u_{+}^{\varepsilon } &=&f_{+}%
\text{ in }\Omega _{+},  \label{bvpa} \\
-\Delta u_{-}^{\varepsilon }+u_{-}^{\varepsilon } &=&f_{-}\text{ in }\Omega
_{-},  \label{bvpb} \\
u_{+}^{\varepsilon } &=&0\text{ on }\partial \Omega _{+}\backslash \Sigma ,
\label{bvpc} \\
u_{-}^{\varepsilon } &=&0\text{ on }\partial \Omega _{-}\backslash \Sigma ,
\label{bvpd} \\
u_{+}^{\varepsilon }-u_{-}^{\varepsilon } &=&0\text{ on }\Sigma ,
\label{bvpe} \\
\varepsilon ^{2}\frac{\partial u_{+}^{\varepsilon }}{\partial \nu }-\frac{%
\partial u_{-}^{\varepsilon }}{\partial \nu } &=&h\text{ on }\Sigma ,
\label{bvpf}
\end{eqnarray}
where $\Delta $ denotes the Laplacian operator, $\varepsilon \in (0,1]$ is a
given parameter, $f_{\pm },h$ are given smooth functions and $\nu $ denotes
the outward normal vector along $\Sigma $ oriented outside $\Omega_+$. The
formal limit problem of (\ref{bvpa})--(\ref{bvpf}), as $\varepsilon
\rightarrow 0$, is 
\begin{eqnarray*}
u_{+}^{0} &=&f_{+}\text{ in }\Omega _{+}, \\
-\Delta u_{-}^{0}+u_{-}^{0} &=&f_{-}\text{ in }\Omega _{-}, \\
u_{+}^{0} &=&0\text{ on }\partial \Omega _{+}\backslash \Sigma , \\
u_{-}^{0} &=&0\text{ on }\partial \Omega _{-}\backslash \Sigma , \\
u_{+}^{0}-u_{-}^{0} &=&0\text{ on }\Sigma , \\
-\frac{\partial u_{-}^{0}}{\partial \nu } &=&h\text{ on }\Sigma .
\end{eqnarray*}
Since, in general, $f_{+}$ does not satisfy the boundary conditions $%
f_{+}=u_{+}^{0}$ on $\partial \Omega _{+}\backslash \Sigma $ and $%
f_{+}=u_{-}^{0}$ on $\Sigma $, we expect that the solution $u^{\varepsilon }$
will contain boundary layers along $\partial \Omega _{+}\backslash \Sigma $
and an interface layer along $\Sigma $.

We assume that the data of our problem is analytic and satisfies 
\begin{equation}
\left\Vert \nabla ^{p}f_{\pm }\right\Vert _{\infty ,\Omega _{\pm }}\leq
C_{f_{\pm }}\gamma _{f_{\pm }}^{p}p!\ \forall \ p=0,1,2,...,  \label{assf}
\end{equation}
\begin{equation}
\left\Vert \nabla_{\Sigma}^{p}h\right\Vert _{\infty ,\Sigma }\leq
C_{h}\gamma _{h}^{p}p!\ \forall \ p=0,1,2,...,  \label{assh}
\end{equation}
for some positive constants $C_{f_{\pm }},\gamma _{f_{\pm }},C_{h},\gamma
_{h}$, where $\nabla_{\Sigma}$ denotes the tangential derivative along $%
\Sigma$. The following theorem gives bounds on the derivatives of the
solution to (\ref{bvpa})--(\ref{bvpf}) that are explicit in terms of the
order of differentiation as well as the singular perturbation parameter $%
\varepsilon$.

\begin{theorem}
\label{thm_asy}
Let $u^{\varepsilon }=\left( u_{+}^{\varepsilon },u_{-}^{\varepsilon
}\right) $ be the solution to (\ref{bvpa})--(\ref{bvpf}) with the data
satysfying (\ref{assf}), (\ref{assh}). \ Then there are constants $C,K>0$ 
depending only of the data such that
\begin{equation}
\varepsilon\left\Vert D^{\alpha }u_{+}^{\varepsilon }\right\Vert _{0,\Omega _{+}}+
\left\Vert D^{\alpha }u_{-}^{\varepsilon }\right\Vert _{0,\Omega _{-}}\leq
C \varepsilon K^{\left\vert \alpha \right\vert }\max \left\{ \left\vert \alpha
\right\vert ,\varepsilon ^{-1}\right\} ^{\left\vert \alpha \right\vert
}\;\forall \;\alpha = 1,2,... .  \label{Duserge}
\end{equation}
%
%
\end{theorem}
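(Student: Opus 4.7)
The plan is to derive (\ref{Duserge}) by reducing the transmission problem to known interior and near-boundary regularity results for singularly perturbed second order elliptic problems on smooth domains, plus a new argument at the interface $\Sigma$. I would begin with the energy estimate obtained by testing the variational form of (\ref{bvpa})--(\ref{bvpf}) against $u^\varepsilon$ itself; since $u_+^\varepsilon|_\Sigma = u_-^\varepsilon|_\Sigma$, the interface terms combine into $-\int_\Sigma h\,u_-^\varepsilon\,d\sigma$, and coercivity yields
\begin{equation*}
\varepsilon\|\nabla u_+^\varepsilon\|_{0,\Omega_+} + \|u_+^\varepsilon\|_{0,\Omega_+} + \|u_-^\varepsilon\|_{1,\Omega_-} \lesssim \|f_+\|_{0,\Omega_+} + \|f_-\|_{0,\Omega_-} + \|h\|_{-1/2,\Sigma},
\end{equation*}
which settles $|\alpha|\le 1$ and establishes the induction base.

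Next I would localize via a finite analytic partition of unity subordinate to three types of neighborhoods: interior patches of $\Omega_+$ and $\Omega_-$, a tubular neighborhood of $\partial\Omega_+\setminus\Sigma$, and a tubular neighborhood of $\Sigma$. On the interior patches and on the tubular neighborhood of $\partial\Omega_+\setminus\Sigma$, the cited results of Melenk--Schwab \cite{ms1} apply directly to (\ref{bvpa}) and give exactly the $\varepsilon$-explicit estimate $\varepsilon\|D^\alpha u_+^\varepsilon\|_{0}\lesssim \varepsilon K^{|\alpha|}\max\{|\alpha|,\varepsilon^{-1}\}^{|\alpha|}$. Similarly, interior analytic regularity for the unperturbed equation (\ref{bvpb}) produces the corresponding bound on $u_-^\varepsilon$ away from $\Sigma$ (there is no factor $\varepsilon^{-1}$ since $\varepsilon$ does not appear in (\ref{bvpb})).

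The core new work is on the tubular neighborhood of $\Sigma$. I would flatten $\Sigma$ by an analytic diffeomorphism, introducing tangential/normal coordinates $(s,\rho)$ with $\rho>0$ in $\Omega_+$ and $\rho<0$ in $\Omega_-$, and work in the resulting transmission problem on a strip with analytic coefficients. Because $\partial_s$ is tangent to $\Sigma$, differentiating the system $k$ times in $s$ preserves the transmission conditions (\ref{bvpe})--(\ref{bvpf}): the new unknowns $w^\varepsilon:=\partial_s^k u^\varepsilon$ again satisfy a coupled system of the same form with right-hand sides $\partial_s^k f_\pm$ and $\partial_s^k h$ plus commutator terms involving lower-order $s$-derivatives, the latter being controlled by the inductive hypothesis together with (\ref{assf})--(\ref{assh}). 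Applying the energy estimate to $w^\varepsilon$ then yields tangential bounds of the form $\varepsilon\|\partial_s^k u_+^\varepsilon\|_0+\|\partial_s^k u_-^\varepsilon\|_0 \lesssim \varepsilon K^k\max\{k,\varepsilon^{-1}\}^k$. Normal derivatives are recovered from the equations themselves: (\ref{bvpa}) gives $\partial_\rho^2 u_+^\varepsilon = \varepsilon^{-2}(u_+^\varepsilon - f_+) - \partial_s^2 u_+^\varepsilon$ plus curvature terms (accounting for the factor $\varepsilon^{-1}$ in the envelope), and (\ref{bvpb}) gives $\partial_\rho^2 u_-^\varepsilon = u_-^\varepsilon - f_- - \partial_s^2 u_-^\varepsilon$ (no $\varepsilon^{-1}$). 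Iterating this exchange of two normal derivatives for two tangential ones plus a factor $\varepsilon^{-2}$ on the $+$ side produces the exact powers of $\max\{|\alpha|,\varepsilon^{-1}\}$ asserted in (\ref{Duserge}).

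The main obstacle is the bookkeeping of $\varepsilon$-powers across the interface in the tangential induction. One must verify that even though $\partial_\nu u_+^\varepsilon$ on $\Sigma$ is of order $\varepsilon^{-1}$, the jump condition (\ref{bvpf}) forces $\partial_\nu u_-^\varepsilon|_\Sigma = \varepsilon^2 \partial_\nu u_+^\varepsilon|_\Sigma - h$ to remain uniformly bounded in $\varepsilon$; this is precisely what allows the $\Omega_-$ estimate in (\ref{Duserge}) to carry no prefactor $\varepsilon^{-1}$. Establishing this stability uniformly in $k$ requires a careful induction in which one simultaneously controls $\|\partial_s^k u_\pm^\varepsilon\|_0$, the tangential trace on $\Sigma$, and the normal flux trace, and it is here that the analyticity hypotheses (\ref{assf})--(\ref{assh}) are used to close the factorial bound at each step.
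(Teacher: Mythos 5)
Your plan and the paper's proof share the same skeleton: settle $|\alpha|\le 1$ by the energy estimate, localize by an analytic partition of unity, flatten $\Sigma$ (and $\partial\Omega_+\setminus\Sigma$), and then run a Morrey--Nirenberg argument (tangential derivatives by difference quotients, normal derivatives recovered from the equations, factorial bookkeeping to get analyticity). Away from $\Sigma$ you, like the paper, simply invoke the Melenk--Schwab analytic regularity from \cite{ms1} for the perturbed operator and standard analytic elliptic regularity for the unperturbed one. The genuine divergence is in how the interface is handled. The paper does not run the tangential/normal induction directly on the coupled transmission system; instead it first establishes a local $\varepsilon$-weighted $H^2$ a priori estimate on half-balls centred on $\Sigma$,
\begin{equation*}
\varepsilon\,|u_{+}^{\varepsilon}|_{2,B_{x_0}\cap\Omega_+}+|u_{-}^{\varepsilon}|_{2,B_{x_0}\cap\Omega_-}
\lesssim \varepsilon^{-1}\|f_+\|_{0}+\|f_-\|_{0}+\|h\|_{\Sigma}
+\varepsilon\|u_{+}^{\varepsilon}\|_{1}+\|u_{-}^{\varepsilon}\|_{1},
\end{equation*}
obtained by flattening $\Sigma$ and then using \emph{reflections} across the flattened interface to decouple the transmission problem into a Dirichlet problem and a Neumann problem on half-balls, after which the standard Morrey--Nirenberg machinery for non-transmission problems (Theorem~2.1 in \cite{ms}, or \cite{GLC}) can be cited directly. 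Your proposal keeps the coupled transmission system and tracks the $\varepsilon$-weights through the tangential induction and the normal-derivative recovery by hand, using the flux jump $\partial_\nu u_-^\varepsilon|_\Sigma=\varepsilon^2\partial_\nu u_+^\varepsilon|_\Sigma-h$ to explain why $\Omega_-$ carries no $\varepsilon^{-1}$ factor. This is a sound alternative route and arguably more self-contained, but it trades the reflection trick for considerably heavier bookkeeping: in particular, your base case from the energy estimate is only $H^1$, and to launch the normal-derivative recovery you still need the $\varepsilon$-weighted $H^2$ estimate near $\Sigma$ with the correct powers (the displayed inequality above), which you gesture at but do not state; the reflection argument is precisely how the paper gets it essentially for free from known half-ball results. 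If you choose to proceed without reflections, you should isolate and prove that local $H^2$ estimate explicitly before starting the induction.
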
%
\begin{proof} 
This follows from the local estimates
\begin{eqnarray*}
\varepsilon |u_{+}^{\varepsilon }| _{2, B_{x_0}\cap \Omega _{+}}+
|u_{-}^{\varepsilon }|_{2, B_{x_0}\cap \Omega _{-}}\leq
C (\varepsilon^{-1} \|f_+\|_{0, B'_{x_0}\cap \Omega _{+}}\\
+\|f_-\|_{0, B'_{x_0}\cap \Omega _{-}}+\|h\|_{B'_{x_0}\cap\Sigma}
+\varepsilon \|u_{+}^{\varepsilon }\| _{1, B'_{x_0}\cap \Omega _{+}}+
\|u_{-}^{\varepsilon }\|_{1, B'_{x_0}\cap \Omega _{-}}),
\end{eqnarray*}
for all sufficiently small balls $\bar B_{x_0}\subset B'_{x_0}$ centred at $x_0\in \Sigma$ (proved by a local change of variables and some reflexions to reduce the transmission problem into a Dirichlet problem
and a Neumann one in  half-balls)
and the use of Morrey-Nirenberg techniques (see Theorem 2.1 in \cite{ms} or Theorems 5.2.2 and 5.3.8  in \cite{GLC}).
\end{proof}

It should be noted that (\ref{Duserge}) gives sufficient information for the
approximation to $u^{\varepsilon }$ in the so-called asymptotic case, i.e.
when the degree $p$ of the approximating polynomials satisfies $%
p>O(\varepsilon ^{-1})$. \ For the pre-asymptotic case, i.e. when $p\leq
O(\varepsilon ^{-1})$, we will need the regularity results provided in the
next section.


\section{Expansion of the solution\label{expansion}}


The solution of (\ref{bvpa})--(\ref{bvpf}) may be decomposed as 
\begin{equation}
u^{\varepsilon }=w^{\varepsilon }+\chi _{BL}u_{BL}^{\varepsilon }+\chi
_{IL}u_{IL}^{\varepsilon }+r^{\varepsilon },  \label{decomp}
\end{equation}%
where $w^{\varepsilon }$ denotes the outer (smooth) part, $%
u_{BL}^{\varepsilon }$ denotes the boundary layer along $\partial \Omega
_{+}\backslash \Sigma $, $u_{IL}^{\varepsilon }$ denotes the interface layer
along $\Sigma $ and $r^{\varepsilon }$ denotes the remainder. The functions $%
\chi _{BL}$,$\chi _{IL}$ denote smooth cut-off functions (see equations (\ref%
{chi_BL}), (\ref{chi_IL}) ahead) in order to account for the fact that the
aforementioned components do not have support in the entire domain $\Omega $.

In order to define the inner (boundary layer) expansion we introduce \emph{%
boundary fitted coordinates} as follows: Let $\left( X(\theta ),Y(\theta
)\right) ,\theta \in \lbrack 0,L]$ be an analytic $L-$periodic
parametrization of $\partial \Omega _{+}\backslash \Sigma $ (by arc length),
such that the normal vector $\left( -Y^{\prime }(\theta ),X^{\prime }(\theta
)\right) $ always points into the domain $\Omega _{+}$. \ Let $%
\kappa_{+}(\theta )$ denote the curvature of $\partial \Omega _{+}\backslash
\Sigma $ and denote by $\mathbb{T}_{L}$ the one-dimensional torus of length $%
L$. By the analyticity of $\partial \Omega $ we have that the functions $X,Y$
and $\kappa $ are analytic. We also let $\rho _{0}>0$ be a fixed constant
satisfying 
\begin{equation}
0<\rho _{0}<\frac{1}{\left\Vert \kappa _{+}\right\Vert _{L^{\infty }\left(
[0,L)\right) }}.  \label{rho_0}
\end{equation}
Then the mapping $\psi :[0,\rho _{0}]\times \mathbb{T}_{L}\rightarrow 
\overline{\Omega }_{+}$ given by 
\begin{equation}
\psi :(\rho ,\theta )\rightarrow \left( X(\theta )-\rho Y^{\prime }(\theta
),Y(\theta )+\rho X^{\prime }(\theta )\right)  \label{psi}
\end{equation}
is real analytic on $[0,\rho _{0}]\times \mathbb{T}_{L}$. \ The function $%
\psi $ maps the rectangle $(0,\rho _{0})\times \lbrack 0,L)$ onto a
half-tubular neighborhood $\Omega _{+}^{0}$ of $\partial
\Omega_{+}\backslash \Sigma $, which may be described as 
\begin{equation}
\Omega _{+}^{0}=\left\{ z-\rho \mathbf{n}_{z}:z\in \partial \Omega
_{+}\backslash \Sigma ,0<\rho <\rho _{0}\right\} ,  \label{Omega_0}
\end{equation}
with $z=z(\theta )=\left( X(\theta ),Y(\theta )\right) $ and $\mathbf{n}_{z}$
the outward unit normal at $z\in \partial \Omega _{+}\backslash \Sigma $.

The interface layer will also be defined in a neighborhood of the interface $%
\Sigma $. Quite analogously, let $\left( X_{\Sigma }(\theta ),Y_{\Sigma
}(\theta )\right) ,\theta \in \lbrack 0,L_{\Sigma }]$ be an analytic $%
L_{\Sigma }-$periodic parametrization of $\Sigma $ (as above), let $\kappa
_{\Sigma }(\theta )$ denote the curvature of $\Sigma $ and denote by $%
\mathbb{T}_{L_{\Sigma }}$ the one-dimensional torus of length $L_{\Sigma }$.
With $\rho _{\Sigma }>0$ a fixed constant satisfying 
\begin{equation}
0<\rho _{\Sigma }<\frac{1}{\left\Vert \kappa _{\Sigma }\right\Vert
_{L^{\infty }\left( [0,L_{\Sigma })\right) }},  \label{rho_sigma}
\end{equation}
we define, analogously to (\ref{Omega_0}), 
\begin{equation}
\Omega _{\Sigma }^{0}=\left\{ z-\rho \mathbf{n}_{\Sigma }:z\in \Sigma
,0<\rho <\rho _{\Sigma }\right\} ,  \label{Omega_sigma}
\end{equation}
with $z=z(\theta )=\left( X_{\Sigma }(\theta ),Y_{\Sigma }(\theta )\right) $
and $\mathbf{n}_{\Sigma }$ the outward unit normal at $z\in \Sigma $.

The smooth cut-off functions $\chi _{BL},\chi _{IL}$ appearing in (\ref%
{decomp}) are defined as follows: Let $\rho _{1},\rho _{2}$ be given
satisfying 
\begin{equation}
0<\rho _{1}<\rho _{0}\;,\;0<\rho _{2}<\rho _{\Sigma }  \label{rho_i}
\end{equation}%
and let $\chi _{BL},\chi _{IL}$ be defined on $\overline{\Omega }_{+}$ via 
\begin{equation}
\chi _{BL}(x)=\left\{ 
\begin{array}{cc}
1 & \text{for }0\leq \text{dist}(x,\partial \Omega _{+}\backslash \Sigma
)\leq \rho _{1}\text{ \ \ \ \ \ \ \ } \\ 
0 & \text{for dist}(x,\partial \Omega _{+}\backslash \Sigma )\geq (\rho
_{1}+\rho _{0})/2%
\end{array}%
\right. ,  \label{chi_BL}
\end{equation}
\begin{equation}
\chi _{IL}(x)=\left\{ 
\begin{array}{cc}
1 & \text{for }0\leq \text{dist}(x,\Sigma )\text{ }\leq \rho _{2}\text{\ \ \
\ \ \ \ } \\ 
0 & \text{for dist}(x,\Sigma )\geq (\rho _{2}+\rho _{\Sigma })/2%
\end{array}%
\right. .  \label{chi_IL}
\end{equation}
The above will be utilized in sections \ref{BL_constr} and \ref{IL_constr}
ahead.

\subsection{Construction and regularity of the outer part \label{smooth}}

We begin by constructing the outer part $w^{\varepsilon }$ in (\ref{decomp}%
). To this end, we expand the solution $u^{\varepsilon }=\left(
u_{+}^{\varepsilon },u_{-}^{\varepsilon }\right) $ as a formal series in
powers of $\varepsilon$, 
\begin{equation}
u_{\pm }^{\varepsilon }\;=u_{0}^{\pm }+\varepsilon u_{1}^{\pm }+\varepsilon
^{2}u_{2}^{\pm }+...  \label{uj}
\end{equation}
and insert it in the differential equations (\ref{bvpa})--(\ref{bvpf}),
equating like powers of $\varepsilon $. This allows us to get expressions
for the functions $u_{j}^{\pm },j=0,1,2,...$. In particular, for $u_{j}^{+}$
we obtain 
\begin{equation}
u_{0}^{+}=f_{+},\;u_{2j}^{+}=\Delta
^{(2j+2)}f_{+},\;u_{2j-1}^{+}=0,\;j=1,2,...  \label{uj+}
\end{equation}
where $\Delta ^{(i)}$ denotes the iterated Laplacian. For $u_{0}^{-}$ we
obtain 
\begin{eqnarray}
-\Delta u_{0}^{-}+u_{0}^{-} &=&f_{-}\text{ in }\Omega _{-},  \label{u0-a} \\
u_{0}^{-} &=&0\text{ on }\partial \Omega _{-}\backslash \Sigma ,
\label{u0-b} \\
\frac{\partial u_{0}^{-}}{\partial \nu } &=&-h\text{ on }\Sigma .
\label{u0-c}
\end{eqnarray}
For $j\geq 1$ we find $u_{2j-1}^{-}=0$ and 
\begin{eqnarray}
-\Delta u_{2j}^{-}+u_{2j}^{-} &=&0\text{ in }\Omega _{-},  \label{uj-a} \\
u_{2j}^{-} &=&0\text{ on }\partial \Omega _{-}\backslash \Sigma ,
\label{uj-b} \\
\frac{\partial u_{2j}^{-}}{\partial \nu } &=&\frac{\partial u_{2j-2}^{+}}{%
\partial \nu }\text{ on }\Sigma .  \label{uj-c}
\end{eqnarray}
Note that $u_{2j}^{-}$ is not explicitly known but is solution of a
Dirichlet-Neumann problem in $\Omega _{-}$. Due to the analyticity
assumption, $u_{2j}^{-}$ is analytic as well (see equation (\ref{eq13})
ahead).

Using the above, we can define the \emph{outer expansion} as 
\begin{equation}
w^{\pm }\equiv w_{M}^{\pm }=\overset{M}{\underset{j=0}{\dsum }}\varepsilon
^{2j}u_{2j}^{\pm },  \label{wM}
\end{equation}
where $M$ is the order of the expansion (i.e. the number of terms that we
will include) and will ultimately be taken to be proportional to $%
1/\varepsilon $ (cf. \cite{melenk}, \cite{ms}). It is not difficult to see
that 
\begin{equation}
\left( -\varepsilon ^{2}\Delta w_{M}^{+}+w_{M}^{+}\right) -f_{+}=\varepsilon
^{2M+2}\Delta ^{(M+1)}f_{+}  \label{eq11}
\end{equation}
and 
\begin{equation}
\left( -\Delta w_{M}^{-}+w_{M}^{-}\right) -f_{-}=0.  \label{eq18}
\end{equation}
Moreover, we have the following theorem.

\begin{theorem}
Let $w_{M}^{\pm }$ be defined by (\ref{wM}). Then there exist positive
constants $K_{1}$ and $C$ depending only on the data of the problem, such
that if $\varepsilon M$ is sufficiently small then
\begin{equation}
\left\Vert D^{\alpha }w_{M}^{+}\right\Vert _{\infty ,\Omega _{+}}\lesssim
K_{1}^{|\alpha |}|\alpha |!\;\forall \;\alpha \in \mathbb{N}_{0}^{2},
\label{eq14a}
\end{equation}
\begin{equation}
\left\Vert w_{M}^{-}\right\Vert _{k,\Omega _{-}}\lesssim C^{k+1}k!\;.
\label{eq14}
\end{equation}
\end{theorem}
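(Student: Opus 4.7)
The plan is to prove the two estimates separately, since $w_M^+$ is known explicitly as a truncated series of iterated Laplacians of $f_+$, whereas each $u_{2j}^-$ is defined implicitly through an elliptic boundary value problem.

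For estimate (\ref{eq14a}), I would use the explicit formula giving $u_{2j}^+$ as an iterated Laplacian applied to $f_+$. The analyticity hypothesis (\ref{assf}) yields
$$\|D^\alpha u_{2j}^+\|_{\infty,\Omega_+} \lesssim \gamma_{f_+}^{|\alpha|+2j}\,(|\alpha|+2j)!\,,$$
and splitting the factorial via the combinatorial inequality $(n+m)!\le 2^{n+m}n!\,m!$ gives
$$\varepsilon^{2j}\|D^\alpha u_{2j}^+\|_{\infty,\Omega_+} \lesssim (2\gamma_{f_+})^{|\alpha|}|\alpha|!\,(2\varepsilon\gamma_{f_+})^{2j}(2j)!\,.$$
The sum $\sum_{j=0}^{M}(2\varepsilon\gamma_{f_+})^{2j}(2j)!$ is then bounded by a geometric series via $(2j)!\le(2j)^{2j}$ provided $\varepsilon M$ is small enough that $4\varepsilon\gamma_{f_+} M\le 1/2$, which gives (\ref{eq14a}) with $K_1\sim 2\gamma_{f_+}$ and a constant independent of $M$.

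For estimate (\ref{eq14}), the strategy is to prove a sharp analytic bound of the form $\|u_{2j}^-\|_{k,\Omega_-}\lesssim C_1^{k+2j+1}(k+2j)!$ for every $j$, and then sum as above. The base case $j=0$ is the mixed Dirichlet--Neumann problem (\ref{u0-a})--(\ref{u0-c}) for $u_0^-$ in a domain with analytic boundary, with analytic right-hand side $f_-$ and analytic Neumann datum $-h$; standard analytic regularity for such problems (Morrey--Nirenberg techniques, as invoked for Theorem~\ref{thm_asy} and \cite{ms,GLC}) yields $\|u_0^-\|_{k,\Omega_-}\lesssim C_0^{k+1}k!$. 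For $j\ge 1$, the datum in (\ref{uj-c}) is $\partial u_{2j-2}^+/\partial\nu$, which by (\ref{assf}) and the iterated-Laplacian formula admits analytic bounds on $\Sigma$ of the form $\gamma^{2j-1+m}(2j-1+m)!$. Applying the same analytic shift theorem with these bounds as data produces the claimed $k$- and $j$-dependent estimate. Summing $\varepsilon^{2j}C_1^{k+2j+1}(k+2j)!$ over $0\le j\le M$, and splitting $(k+2j)!\le 2^{k+2j}k!(2j)!$, the $j$-sum is handled exactly as in Part 1 under the smallness condition on $\varepsilon M$, yielding (\ref{eq14}).

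The main obstacle is the second part: obtaining a Sobolev-regularity bound on $u_{2j}^-$ with the correct joint dependence on the differentiation order $k$ and the expansion index $j$, so that the dependence on $j$ inherited from the Neumann data (which itself grows like $(2j)!$) does not destroy the summation. This requires invoking an analytic elliptic regularity theorem for the mixed Dirichlet--Neumann problem on a domain whose boundary $\partial\Omega_-\setminus\Sigma$ and interface $\Sigma$ are both analytic, with quantitative control of the constant in the analytic estimate in terms of the analytic bounds on the boundary data; the interface geometry and the nonhomogeneous Neumann condition make this more delicate than the standard purely Dirichlet case, but it can be reduced to that case by the same local change-of-variables and reflection argument sketched in the proof of Theorem~\ref{thm_asy}.
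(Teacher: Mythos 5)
Your proposal follows essentially the same route as the paper's proof: the first estimate is the content of Theorem 2.2 in \cite{ms1} (which packages exactly the iterated-Laplacian bound plus the geometric-series summation under $\varepsilon M$ small that you work out directly), and the second estimate is obtained in the paper by the same strategy of applying a quantitative analytic elliptic regularity estimate (eq.~(\ref{eq12a}) from \cite{GLC}) to the mixed Dirichlet--Neumann problems for $u_{2j}^-$, tracking how the analytic bounds on the Neumann data $\partial_\nu(\Delta^{(j)}f_+)$ propagate into a factored bound $|u_{2j}^-|_{k,\Omega_-}\lesssim C^{k+1}k!\,(2j)!\,\gamma^{2j}$ (equivalent to your combined $(k+2j)!$ form after splitting), and then summing the $j$-series exactly as you describe. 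The only cosmetic differences are that the paper cites \cite{ms1} instead of redoing the $w_M^+$ computation, and carries the $k$- and $j$-factorials separately rather than merged.
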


\begin{proof} From Theorem 2.2 of \cite{ms1} we have that
\begin{equation*}
\left\Vert D^{\alpha }w_{M}^{+}\right\Vert _{\infty ,\Omega _{+}}\lesssim
K_{1}^{|\alpha |}|\alpha |!\left( 1+\left( 2M\varepsilon K_{2}\right)
^{2M}\right) \;\forall \;\alpha \in \mathbb{N}_{0}^{2},
\end{equation*}
so if $2M\varepsilon K_{2}<1$ we get (\ref{eq14a}) . \ In order to establish
(\ref{eq14}), we first consider $u_{0}^{-}$, which satisfies the
Dirichlet-Neumann problem (\ref{u0-a})--(\ref{u0-c}). Since the data of this
problem are analytic, we have that $u_{0}^{-}$ is also analytic \cite{GLC},
and moreover
\begin{equation*}
\left\vert u_{0}^{-}\right\vert _{k,\Omega _{-}}\leq C^{k+1}k!\;\forall
\;k\in \mathbb{N}_0.
\end{equation*}
Next, we consider $u_{2j}^{-},$ $j=0,1,...,$ defined by (\ref{uj-a})--(\ref{uj-c}), 
with again the data being analytic. \ Casting (\ref{uj-a})--(\ref{uj-c}) 
into a variational formulation, allows us to write
\begin{equation*}
\left\Vert u_{2j}^{-}\right\Vert _{1,\Omega _{-}}^{2}=\underset{\Sigma }{%
\dint }\frac{\partial u_{2j-2}^{+}}{\partial \nu }u_{2j}^{-}\lesssim
\left\Vert \frac{\partial u_{2j-2}^{+}}{\partial \nu }\right\Vert _{0,\Sigma
}\left\Vert u_{2j}^{-}\right\Vert _{1,\Omega _{-}},
\end{equation*}
which, using (\ref{uj+}), gives
\begin{equation}
\left\Vert u_{2j}^{-}\right\Vert _{1,\Omega _{-}}\lesssim \left\Vert \frac{%
\partial u_{2j-2}^{+}}{\partial \nu }\right\Vert _{0,\Sigma }\lesssim
\left\Vert \frac{\partial \left( \Delta ^{2j}f_{+}\right) }{\partial \nu }%
\right\Vert _{0,\Sigma }\lesssim \left\Vert \nabla ^{4j+1}f_{+}\right\Vert
_{\infty ,\Omega _{+}}.  \label{eq12}
\end{equation}
From \cite{GLC} we have that there exists $C\in \mathbb{R}^{+}$ such that 
\begin{equation}
\frac{1}{k!}\left\vert u_{2j}^{-}\right\vert _{k,\Omega _{-}}\leq
C^{k+1}\left\{ \underset{\ell =1}{\overset{k}{\dsum }}\frac{1}{\ell !}%
\left\Vert \frac{\partial u_{2j}^{-}}{\partial \nu }\right\Vert _{\ell +%
\frac{1}{2},\Sigma }+\left\Vert u_{2j}^{-}\right\Vert _{1,\Omega
_{-}}\right\} ,  \label{eq12a}
\end{equation}
and we note that (see eq. (\ref{uj-c})), 
\begin{eqnarray*}
\left\Vert \frac{\partial u_{2j}^{-}}{\partial \nu }\right\Vert _{\ell +%
\frac{1}{2},\Sigma }^{2} &=&\left\Vert \frac{\partial u_{2j-2}^{+}}{\partial
\nu }\right\Vert _{\ell +\frac{1}{2},\Sigma }^{2}=\left\Vert \frac{\partial
\left( \Delta ^{2j}f_{+}\right) }{\partial \nu }\right\Vert _{\ell +\frac{1}{%
2},\Sigma }^{2}\lesssim \left\Vert \Delta ^{2j}f_{+}\right\Vert _{\ell
+1,\Omega _{+}}^{2} \\
&\lesssim &\underset{\left\vert \alpha \right\vert \leq \ell +1}{\dsum }\ 
\underset{\Omega _{+}}{\dint }\left\vert D^{\alpha }\Delta
^{2j}f_{+}\right\vert ^{2}dx\lesssim \underset{\left\vert \alpha \right\vert
\leq \ell +1}{\dsum }\left\Vert D^{\alpha }\Delta ^{2j}f_{+}\right\Vert
_{\infty ,\Omega _{+}}^{2}.
\end{eqnarray*}
Hence, (\ref{eq12a}) becomes (with the aid of (\ref{assf}) and (\ref{eq12}))
\begin{eqnarray*}
\frac{1}{k!}\left\vert u_{2j}^{-}\right\vert _{k,\Omega _{-}} &\leq
&C^{k+1}\left\{ \underset{\ell =1}{\overset{k}{\dsum }}\frac{1}{\ell !}%
\underset{\left\vert \alpha \right\vert \leq \ell +1}{\dsum }\left\Vert
D^{\alpha }\Delta ^{2j}f_{+}\right\Vert _{\infty ,\Omega _{+}}+\left\Vert
\Delta ^{2j}f_{+}\right\Vert _{\infty ,\Sigma }\right\}  \\
&\lesssim &C^{k+1}\left\{ \underset{\ell =1}{\overset{k}{\dsum }}\frac{1}{%
\ell !}\underset{\left\vert \alpha \right\vert \leq \ell +1}{\dsum }\gamma
^{\left\vert \alpha \right\vert +2j}\left( \left\vert \alpha \right\vert
+2j\right) !+\gamma ^{2j}\left( 2j\right) !\right\}  \\
&\lesssim &C^{k+1}\underset{\ell =1}{\overset{k}{\dsum }}\ell ^{2}\gamma
^{\ell +2j}\left( \ell +2j\right) ! \\
&\lesssim &C^{k+1}k^{2}\left( 2j\right) !\underset{\ell =1}{\overset{k}{%
\dsum }}\gamma ^{k+2j}\binom{\ell +2j}{\ell } \\
&\lesssim &C_{1}^{k+1}\left( 2j\right) !\left( 1+\gamma \right)
^{k+2j}\gamma ^{2j} \\
&\lesssim &C_{1}^{k+1}\left( 2j\right) !\gamma _{1}^{2j}
\end{eqnarray*}
for a suitable $C_{1},\gamma _{1}>0.$ \ This shows that $u_{2j}^{-}$ are
analytic and $\forall \;j=0,1,...$ 
\begin{equation}
\left\vert u_{2j}^{-}\right\vert _{k,\Omega _{-}}\lesssim
C_{1}^{k+1}k!\left( 2j\right) !\gamma _{1}^{2j}\;,\;k\in \mathbb{N}.
\label{eq13}
\end{equation}
Thus, from the definition of $w_{M}^{-}$ we have
\begin{eqnarray*}
\left\vert w_{M}^{-}\right\vert _{k,\Omega _{-}} &\leq &\underset{j=0}{%
\overset{M}{\dsum }}\varepsilon ^{2j}\left\vert u_{2j}^{-}\right\vert
_{k,\Omega _{-}}\lesssim C_{1}^{k+1}k!\underset{j=0}{\overset{M}{\dsum }}%
\varepsilon ^{2j}\left( 2j\right) !\gamma _{1}^{2j} \\
&\lesssim &C_{1}^{k+1}k!\underset{j=0}{\overset{M}{\dsum }}\varepsilon
^{2j}\left( 2M\right) ^{2j}\gamma _{1}^{2j}\lesssim C_{1}^{k+1}k!\underset{%
j=0}{\overset{M}{\dsum }}\left( 2\varepsilon M\gamma _{1}\right) ^{2j} \\
&\lesssim &C^{k+1}k!,
\end{eqnarray*}
\emph{provided} $2\varepsilon M\gamma _{1}<1\,\ $(so that the above sum can
be estimated by a converging geometric series). \ Estimate (\ref{eq14})
follows.
\end{proof}

\begin{remark}
The above theorem gives bounds on the smooth (outer) part of the solution to
(\ref{bvpa})--(\ref{bvpf}) under the assumption that $\varepsilon M$ is
sufficiently small. In the complementary case, the asymptotic expansion loses its meaning.
\end{remark}

\subsection{Construction and regularity of the boundary layers along $%
\partial \Omega _{+}\backslash \Sigma \label{BL_constr}$}

Boundary layers are introduced in order to account for the fact that the
function $w_{M}^{+}$ does not satisfy the boundary condition on $\partial
\Omega _{+}\backslash \Sigma $ (cf. (\ref{eq11})). These are precisely the
ones constructed and analyzed in \cite{ms1}, so we will only outline the
procedure and quote the relevant results from \cite{ms1}. The boundary layer
correction $u_{BL}^{\varepsilon }$ of $w_{M}^{+}$ is defined as the solution
of 
\begin{eqnarray}
L_{\varepsilon }u_{BL}^{\varepsilon } &=&0\text{ in }\Omega _{+},
\label{LuBL} \\
u_{BL}^{\varepsilon } &=&-w_{M}^{+}\text{ on }\partial \Omega _{+}\backslash
\Sigma ,  \label{uBL_BC}
\end{eqnarray}
where $L_{\varepsilon }$ is defined as 
\begin{equation}
L_{\varepsilon }u:=-\varepsilon ^{2}\Delta u+u.  \label{operator}
\end{equation}
With $\kappa _{+}(\theta )$ the curvature of $\partial \Omega _{+}\backslash
\Sigma $ we set 
\begin{equation*}
\sigma _{+}(\rho ,\theta )=\frac{1}{1-\kappa _{+}(\theta )\rho },
\end{equation*}
and we have (see, e.g. \cite{AF}) 
\begin{equation*}
\Delta u(\rho ,\theta )=\partial _{\rho }^{2}u-\kappa _{+}(\theta )\sigma
_{+}(\rho ,\theta )\partial _{\rho }u+\sigma _{+}^{2}(\rho ,\theta )\partial
_{\theta }^{2}u+\rho \kappa _{+}^{\prime }(\theta )\sigma _{+}^{3}(\rho
,\theta )\partial _{\theta }u.
\end{equation*}
Introducing the stretched variable $\widehat{\rho }=\rho /\varepsilon $, the
operator $L_{\varepsilon }$ becomes 
\begin{equation}
L_{\varepsilon }=-\partial _{\widehat{\rho }}^{2}+\text{Id}+\varepsilon
\kappa _{+}(\theta )\sigma _{+}(\varepsilon \widehat{\rho },\theta )\partial
_{\widehat{\rho }}-\varepsilon ^{2}\sigma _{+}^{2}(\varepsilon \widehat{\rho 
},\theta )-\varepsilon ^{3}\widehat{\rho }\kappa _{+}^{\prime }(\theta
)\sigma _{+}^{3}(\varepsilon \widehat{\rho },\theta )\partial _{\theta }.
\label{operator_coords}
\end{equation}
Expanding the above in power series of $\varepsilon $, we can formally write 
\begin{equation}
L_{\varepsilon }=\underset{i=0}{\overset{\infty }{\dsum }}\varepsilon
^{i}L_{i},  \label{Lepsilon}
\end{equation}
where the operators $L_{i}$ have the form (see equations (2.12)--(2.14) in 
\cite{ms1}) 
\begin{equation}
L_{0}=-\partial _{\widehat{\rho }}^{2}+\text{Id},\;L_{i}=-\widehat{\rho }%
^{i-1}a_{1}^{i-1}\partial _{\widehat{\rho }}-\widehat{\rho }%
^{i-2}a_{2}^{i-2}\partial _{\theta }^{2}-\widehat{\rho }^{i-2}a_{3}^{i-3}%
\partial _{\theta },\;i\geq 1,  \label{2.12}
\end{equation}
and the coefficients $a_{j}^{i}$ are given by 
\begin{equation}
a_{1}^{i}=-\left[ \kappa _{+}(\theta )\right] ^{i+1},\;a_{2}^{i}=\left(
i+1\right) \left[ \kappa _{+}(\theta )\right] ^{i},\;a_{3}^{i}=\frac{%
(i+1)(i+2)}{2}\left[ \kappa _{+}(\theta )\right] ^{i}\kappa _{+}^{\prime
}(\theta ),\;i\in \mathbb{N}_{0},  \label{2.13}
\end{equation}
\begin{equation}
a_{1}^{i}=a_{2}^{i}=a_{3}^{i}=0\text{ for }i<0.  \label{2.14}
\end{equation}
We next make the formal ansatz 
\begin{equation*}
u_{BL}^{\varepsilon }=\underset{i=0}{\overset{\infty }{\dsum }}\varepsilon
^{i}\widehat{U}_{i}(\widehat{\rho },\theta ),
\end{equation*}
and insert it into (\ref{LuBL}). This yields 
\begin{equation}
\underset{i=0}{\overset{\infty }{\dsum }}\varepsilon ^{i}\underset{j=0}{%
\overset{i}{\dsum }}L_{j}\widehat{U}_{i-j}=0,  \label{sumLepsilon}
\end{equation}
allowing us to find the following problem for the functions $\widehat{U}_{i}(%
\widehat{\rho },\theta ),i=0,1,2,...$: 
\begin{equation}
-\partial _{\widehat{\rho }}^{2}\widehat{U}_{i}+\widehat{U}_{i}=\widehat{F}%
_{i}=:\widehat{F}_{i}^{1}+\widehat{F}_{i}^{2}+\widehat{F}_{i}^{3},
\label{eq2.15}
\end{equation}
\begin{equation}
\widehat{F}_{i}^{1}=\underset{k=0}{\overset{i-1}{\dsum }}\widehat{\rho }%
^{k}a_{1}^{k}\partial _{\widehat{\rho }}\widehat{U}_{i-1-k}\;,\;\widehat{F}%
_{i}^{2}=\underset{k=0}{\overset{i-2}{\dsum }}\widehat{\rho }%
^{k}a_{2}^{k}\partial _{\theta }^{2}\widehat{U}_{i-2-k}\;,\;\widehat{F}%
_{i}^{3}=\underset{k=0}{\overset{i-3}{\dsum }}\widehat{\rho }%
^{k+1}a_{3}^{k}\partial _{\theta }\widehat{U}_{i-3-k},  \label{eq2.16}
\end{equation}
where empty sums are assumed to be zero. (See, also, equations
(2.15)--(2.16) in \cite{ms1}). The above are supplemented with boundary
conditions 
\begin{eqnarray*}
\widehat{U}_{i} &\rightarrow &0\text{ as }\rho \rightarrow \infty , \\
\left[ \widehat{U}_{i}\right] _{\partial \Omega \backslash \Sigma }
&=&\left\{ 
\begin{array}{ccc}
-\left[ f\right] _{\partial \Omega \backslash \Sigma } & \text{if} & i=0, \\ 
-\left[ \Delta ^{(i/2)}f\right] _{\partial \Omega \backslash \Sigma } & 
\text{if} & i\in \mathbb{N}\text{ is even,} \\ 
0 & \text{if} & i\in \mathbb{N}\text{ is odd.}%
\end{array}%
\right.
\end{eqnarray*}
The boundary layer (inner) expansion in (\ref{decomp}) is then defined as 
\begin{equation}
u_{BL}^{\varepsilon }\equiv u_{BL}^{M}(\rho ,\theta )=\underset{j=0}{\overset%
{2M+1}{\dsum }}\varepsilon ^{j}\widehat{U}_{j}(\widehat{\rho },\theta )=%
\underset{j=0}{\overset{2M+1}{\dsum }}\varepsilon ^{j}\widehat{U}_{j}(\rho
/\varepsilon ,\theta ),  \label{uBL_M}
\end{equation}
and by construction, it satisfies the boundary condition 
\begin{equation*}
\left[ u_{BL}^{M}\right] _{\partial \Omega \backslash \Sigma }=-\underset{i=0%
}{\overset{2M+1}{\dsum }}\varepsilon ^{2i}\left[ \Delta ^{(i)}f\right]
_{\partial \Omega \backslash \Sigma }.
\end{equation*}
By Theorem 2.2 of \cite{ms1} we have that for every $\alpha \in \lbrack 0,1)$
and all $p,m\in \mathbb{N}_{0}$, 
\begin{equation}
\left\vert \partial _{\rho }^{p}\partial _{\theta }^{m}u_{BL}^{M}(\rho
,\theta )\right\vert \lesssim \left( 1+\left( \frac{\varepsilon (2M+1)K_{2}}{%
1-\alpha }\right) ^{2M+1}\right) m!K_{1}^{m+p}\varepsilon ^{-p}e^{-\alpha
\rho /\varepsilon },\;  \label{uBL_bound}
\end{equation}
for $\theta \in \mathbb{T}_{L},\rho \in \lbrack 0,\rho _{0}],$ with $%
K_{1},K_{2}>0$ independent of $\varepsilon ,p$ and $m$. \ Moreover, by Lemma
2.12 of \cite{ms1}, there exist constants $K,\Theta >0$ independent of $%
\varepsilon $ such that 
\begin{equation}
\left\vert L_{\varepsilon }u_{BL}^{M}\left( \rho ,\theta \right) \right\vert
\lesssim K^{2M+2}\left( \varepsilon (2M+2)+\left\vert \rho \right\vert
\right) ^{2M+2}e^{-\rho /\varepsilon }\;\forall \;(\rho ,\theta )\in B_{\rho
_{0}}(0)\times S(\Theta ),  \label{LuBL_bound}
\end{equation}
where $B_{\delta }(z)$ denotes the (open) disc in the complex plane of
radius $\delta $ centered at $z$, and 
\begin{equation}
S(\Theta )=\left\{ \theta \in \mathbb{C}:\func{Im}(\theta )<\Theta \right\} .
\label{Stheta}
\end{equation}

\subsection{Construction and regularity of the interface layer on $\Sigma 
\label{IL_constr}$}

For a function $\omega =\left( \omega _{+},\omega _{-}\right) $ we denote
the jump $\left[ [\omega ]\right] _{\Sigma }$ on $\Sigma $ as 
\begin{equation}
\lbrack \left[ \omega ]\right] _{\Sigma }:=\left. \left( \omega _{+}\right)
\right\vert _{\Sigma }-\left. \left( \omega _{-}\right) \right\vert _{\Sigma
}.  \label{eq26}
\end{equation}%
We define the function $v_{I}(\rho ,\theta ):=\left(
v_{I}^{-},v_{I}^{+}\right) $ as the solution of the following problem: 
\begin{equation}
\left\{ 
\begin{array}{c}
\begin{array}{c}
\begin{array}{c}
-\varepsilon ^{2}\Delta v_{I}^{+}+v_{I}^{+}=0\text{ in }\Omega _{+} \\ 
-\Delta v_{I}^{-}+v_{I}^{-}=0\text{ in }\Omega _{-}%
\end{array}
\\ 
\lbrack \left[ v_{I}\right] ]_{\Sigma }=-\underset{j=0}{\overset{\infty }{%
\dsum }}\varepsilon ^{2j}[\left[ u_{2j}\right] ]_{\Sigma }%
\end{array}
\\ 
\left. \left( \varepsilon ^{2}\frac{\partial v_{I}^{+}}{\partial \rho }-%
\frac{\partial v_{I}^{-}}{\partial \rho }\right) \right\vert _{\Sigma }=-%
\underset{j=0}{\overset{\infty }{\dsum }}\varepsilon ^{2j}\left. \left(
\varepsilon ^{2}\frac{\partial u_{2I}^{+}}{\partial \rho }-\frac{\partial
u_{2j}^{-}}{\partial \rho }\right) \right\vert _{\Sigma }%
\end{array}%
\right. .  \label{eq27}
\end{equation}%
With $\widehat{\rho }=\rho /\varepsilon $ as before, we write 
\begin{equation*}
\widehat{v}_{I}^{+}(\rho ,\theta )=v_{I}^{+}\left( \widehat{\rho },\theta
\right) ,
\end{equation*}%
and problem (\ref{eq27}) becomes 
\begin{equation}
\left\{ 
\begin{array}{c}
\begin{array}{c}
\left( -\partial _{\widehat{\rho }}^{2}+\text{Id}\right) \widehat{v}%
_{I}^{+}+\varepsilon \kappa _{+}(\theta )\sigma (\varepsilon \widehat{\rho }%
,\theta )\partial _{\widehat{\rho }}\widehat{v}_{I}^{+}-\varepsilon
^{2}\sigma ^{2}(\varepsilon \widehat{\rho },\theta )\widehat{v}_{I}^{+}- \\ 
-\varepsilon ^{3}\widehat{\rho }\kappa _{+}^{\prime }(\theta )\sigma
^{3}(\varepsilon \widehat{\rho },\theta )\partial _{\theta }\widehat{v}%
_{I}^{+}=0\text{ in }\Omega _{+}%
\end{array}
\\ 
-\Delta v_{I}^{-}+v_{I}^{-}=0\text{ in }\Omega _{-} \\ 
\left( \widehat{v}_{I}^{+}-v_{I}^{-}\right) |_{\Sigma }=-\underset{j=0}{%
\overset{\infty }{\dsum }}\varepsilon ^{2j}\left[ [u_{2j}]\right] _{\Sigma }
\\ 
\left. \left( \varepsilon \frac{\partial \widehat{v}_{I}^{+}}{\partial 
\widehat{\rho }}-\frac{\partial v_{I}^{-}}{\partial \rho }\right)
\right\vert _{\Sigma }=-\underset{j=0}{\overset{\infty }{\dsum }}\varepsilon
^{2j}\left. \left( \varepsilon ^{2}\frac{\partial u_{2j}^{+}}{\partial \rho }%
-\frac{\partial u_{2j}^{-}}{\partial \rho }\right) \right\vert _{\Sigma }%
\end{array}%
\right. .  \label{eq28}
\end{equation}%
Now, we write 
\begin{equation}
\widehat{v}_{I}^{+}=\underset{j=0}{\overset{\infty }{\dsum }}\varepsilon ^{j}%
\widehat{V}_{j}^{+}\;,\;v_{I}^{-}=\underset{j=0}{\overset{\infty }{\dsum }}%
\varepsilon ^{j}V_{j}^{-},  \label{vhat+v-}
\end{equation}%
and insert it in (\ref{eq28}) equating like powers of $\varepsilon $, to get
(utilizing again the expansion (\ref{Lepsilon})) 
\begin{equation}
\left\{ 
\begin{array}{c}
-\partial _{\widehat{\rho }}^{2}\widehat{V}_{j}^{+}+\widehat{V}_{j}^{+}=%
\widehat{F}_{j}^{1}+\widehat{F}_{j}^{2}+\widehat{F}_{j}^{3}\text{ in }%
\mathbb{R}_{+}\;\forall \;j\geq 0 \\ 
-\Delta V_{j}^{-}+V_{j}^{-}=0\text{ in }\Omega _{-}\;\forall \;j\geq 0 \\ 
\left( \widehat{V}_{2j}^{+}-V_{2j}^{-}\right) =-\left(
u_{2j}^{+}-u_{2j}^{-}\right) \text{ on }\Sigma \;\forall \;j\geq 0 \\ 
\left( \widehat{V}_{2j+1}^{+}-V_{2j+1}^{-}\right) =0\text{ on }\Sigma
\;\forall \;j\geq 0 \\ 
-\frac{\partial V_{0}^{-}}{\partial \rho }=\frac{\partial u_{0}^{-}}{%
\partial \rho }\text{ on }\Sigma \\ 
\left( \frac{\partial }{\partial \rho }V_{2j}^{-}-\frac{\partial }{\partial 
\widehat{\rho }}\widehat{V}_{2j-1}^{+}\right) =-\left( \frac{\partial }{%
\partial \rho }u_{2j}^{-}-\frac{\partial }{\partial \rho }%
u_{2j-2}^{+}\right) \text{ on }\Sigma \;\forall \;j\geq 1 \\ 
\left( \frac{\partial }{\partial \rho }V_{2j+1}^{-}-\frac{\partial }{%
\partial \widehat{\rho }}\widehat{V}_{2j}^{+}\right) =0\text{ on }\Sigma
\;\forall \;j\geq 0%
\end{array}%
\right. ,  \label{eq29}
\end{equation}%
with $\widehat{F}_{j}^{1},\widehat{F}_{j}^{2},\widehat{F}_{j}^{3}$ given by (%
\ref{eq2.16}) but with $\widehat{U}$ replaced by $\widehat{V}^{+}.$ So for $%
j=0$, we have 
\begin{equation}
\left\{ 
\begin{array}{c}
-\Delta V_{0}^{-}+V_{0}^{-}=0\text{ in }\Omega _{-} \\ 
-\frac{\partial V_{0}^{-}}{\partial \rho }=\frac{\partial u_{0}^{-}}{%
\partial \rho }\text{ on }\Sigma \\ 
V_{0}^{-}=0\text{ on }\partial \Omega _{-}\backslash \Sigma%
\end{array}%
\right. ,  \label{V0minus}
\end{equation}%
\begin{equation}
\left\{ 
\begin{array}{c}
-\partial _{\widehat{\rho }}^{2}\widehat{V}_{0}^{+}+\widehat{V}_{0}^{+}=0%
\text{ in }\mathbb{R}_{+} \\ 
\widehat{V}_{0}^{+}=V_{0}^{-}-\left( u_{0}^{+}-u_{0}^{-}\right) \text{ on }%
\Sigma%
\end{array}%
\right. ,  \label{V0plus}
\end{equation}%
\begin{equation}
\left\{ 
\begin{array}{c}
-\Delta V_{1}^{-}+V_{1}^{-}=0\text{ in }\Omega _{-} \\ 
\frac{\partial }{\partial \rho }V_{1}^{-}=\frac{\partial }{\partial \widehat{%
\rho }}\widehat{V}_{0}^{+}\text{ on }\Sigma \\ 
V_{1}^{-}=0\text{ on }\partial \Omega _{-}\backslash \Sigma%
\end{array}%
\right. ,  \label{V1minus}
\end{equation}%
\begin{equation}
\left\{ 
\begin{array}{c}
-\partial _{\widehat{\rho }}^{2}\widehat{V}_{1}^{+}+\widehat{V}_{1}^{+}=%
\widehat{V}_{0}^{+}\text{ in }\mathbb{R}_{+} \\ 
\widehat{V}_{1}^{+}=V_{1}^{-}\text{ on }\Sigma%
\end{array}%
\right. .  \label{V1plus}
\end{equation}%
In general, for $j\geq 0$ odd we have 
\begin{equation}
\left\{ 
\begin{array}{c}
-\Delta V_{2j+1}^{-}+V_{2j+1}^{-}=0\text{ in }\Omega _{-} \\ 
\frac{\partial }{\partial \rho }V_{2j+1}^{-}=\frac{\partial }{\partial 
\widehat{\rho }}\widehat{V}_{2j}^{+}\text{ on }\Sigma \\ 
V_{2j+1}^{-}=0\text{ on }\partial \Omega _{-}\backslash \Sigma%
\end{array}%
\right. ,  \label{Vjminus_odd}
\end{equation}%
\begin{equation}
\left\{ 
\begin{array}{c}
-\partial _{\widehat{\rho }}^{2}\widehat{V}_{2j+1}^{+}+\widehat{V}%
_{2j+1}^{+}=\widehat{F}_{2j+1}^{1}+\widehat{F}_{2j+1}^{2}+\widehat{F}%
_{2j+1}^{3}\text{ in }\mathbb{R}_{+} \\ 
\widehat{V}_{2j+1}^{+}=V_{2j+1}^{-}\text{ on }\Sigma%
\end{array}%
\right.  \label{Vjplus_odd}
\end{equation}%
and for $j\geq 0$ even we have 
\begin{equation}
\left\{ 
\begin{array}{c}
-\Delta V_{2j}^{-}+V_{2j}^{-}=0\text{ in }\Omega _{-} \\ 
\left( \frac{\partial }{\partial \rho }V_{2j}^{-}-\frac{\partial }{\partial 
\widehat{\rho }}\widehat{V}_{2j-1}^{+}\right) =-\left( \frac{\partial }{%
\partial \rho }u_{2j}^{-}-\frac{\partial }{\partial \rho }%
u_{2j-2}^{+}\right) \text{ on }\Sigma \\ 
V_{2j}^{-}=0\text{ on }\partial \Omega _{-}\backslash \Sigma%
\end{array}%
\right. ,  \label{Vjminus_even}
\end{equation}%
\begin{equation}
\left\{ 
\begin{array}{c}
-\partial _{\widehat{\rho }}^{2}\widehat{V}_{2j}^{+}+\widehat{V}_{2j}^{+}=%
\widehat{F}_{2j}^{1}+\widehat{F}_{2j}^{2}+\widehat{F}_{2j}^{3}\text{ in }%
\mathbb{R}_{+}, \\ 
\widehat{V}_{2j}^{+}=V_{2j}^{-}-\left( u_{2j}^{+}-u_{2j}^{-}\right) \text{
on }\Sigma%
\end{array}%
\right. .  \label{Vjplus_even}
\end{equation}

The regularity of the functions $V_{j}^{-},\widehat{V}_{j}^{+}$ is given by
Theorem \ref{thm_Vij} below. For its proof, we will need the following lemma.

\begin{lemma} \label{lemmaMS}
Let $U_{j}\left( \widehat{\rho },\theta \right) ,j=0,1,2,...,$ be the
solutions to%
\begin{equation}
\left. 
\begin{array}{c}
-\partial _{\widehat{\rho }}^{2}U_{j}+U_{j}=F_{j}\left( \widehat{\rho }%
,\theta \right) \text{ in }\mathbb{R}_{+} \\ 
U_{j}=G_{j}\left( \theta \right) \text{ on }\Sigma 
\end{array}%
\right\} ,  \label{MSprob}
\end{equation}
where $F_{j}\left( \widehat{\rho },\theta \right) =F_{j}^{1}\left( \widehat{%
\rho },\theta \right) +F_{j}^{2}\left( \widehat{\rho },\theta \right)
+F_{j}^{2}\left( \widehat{\rho },\theta \right) $ is given by (\ref{eq2.15}%
)--(\ref{eq2.16}) and $G_{j}$ satisfy 
\begin{equation}
\left\vert G_{j}\left( \theta \right) \right\vert \leq C_{G}\gamma
_{G}^{j}j^{j},  \label{MSdata}
\end{equation}%
for some positive constants $C_{G},\gamma _{G}$ depending only on the data. 
Then, there exist positive constants $\Theta, C_{U},\gamma _{U},$ depending only
 on the data, such that 
\begin{equation}
\left\vert U_{j}\left( \widehat{\rho },\theta \right) \right\vert \leq
C_{U}\gamma _{U}^{j}\left( 1+j+\widehat{\rho }\right) ^{j}e^{-\widehat{\rho }%
}\;\forall \;\left( \widehat{\rho },\theta \right) \in \mathbb{R}_{+}\times
S\left( \Theta \right),  \label{MSbound1}
\end{equation}
where $S(\Theta )$ is given by (\ref{Stheta}). 
Moreover, for any $\alpha \in [0,1)$ there exists $K\in \mathbb{R}_{+}$
depending only on the data, such that
\begin{equation}
\left\vert U_{j}(\widehat{\rho },\theta )\right\vert \lesssim
K^{j}j^{j}\left( 1-\alpha \right) ^{-j}e^{-\alpha \widehat{\rho }},
\label{MSbound3}
\end{equation}%
and%
\begin{equation}
\left\vert \partial _{\rho }^{p}\partial _{\theta }^{q}U_{j}\left( \rho
/\varepsilon ,\theta \right) \right\vert \lesssim \varepsilon
^{-p}e^{(1-\alpha )p}\left( p+1\right) ^{1/2}q!\left( 2/\Theta \right)
^{q}\gamma _{U}^{j}j^{j}\left( 1-\alpha \right) ^{-j}e^{-\rho /\varepsilon
}\;\forall \;p,q\in \mathbb{N}_{0}.  \label{MSbound2}
\end{equation}%
\end{lemma}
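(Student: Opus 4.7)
The plan is to argue by induction on $j$ using the explicit Green's function for the ODE $-\partial_{\widehat\rho}^2 U + U = F$ on the half line with Dirichlet data at $\widehat\rho = 0$ and decay at infinity. Writing the solution as
\[
U_j(\widehat\rho,\theta) = G_j(\theta)\,e^{-\widehat\rho} + \int_0^\infty \mathcal{G}(\widehat\rho,s)\,F_j(s,\theta)\,ds,
\qquad \mathcal{G}(\widehat\rho,s) = \tfrac12\bigl(e^{-|\widehat\rho-s|}-e^{-(\widehat\rho+s)}\bigr),
\]
the base case $j=0$ is immediate since the $\widehat F^\ell_0$ vanish by (\ref{eq2.16}) (empty sums), giving $U_0 = G_0(\theta) e^{-\widehat\rho}$, which satisfies (\ref{MSbound1}) by (\ref{MSdata}). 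For the inductive step I would assume (\ref{MSbound1}) for all $k<j$ and plug the bound into the explicit form of $\widehat F_j^1,\widehat F_j^2,\widehat F_j^3$.

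To estimate the inhomogeneity I first extend everything to a complex strip $S(\Theta)$, choosing $\Theta>0$ small enough that $\kappa_+(\theta)$ and $\kappa_+'(\theta)$ extend analytically to $S(\Theta)$ with bounds $|\kappa_+|\le C_\kappa$, $|\kappa_+'|\le C_\kappa'$ on this strip (this is possible since $\partial\Omega_+$ is analytic). From (\ref{2.13}) one then gets $|a_1^i|\le C_\kappa^{i+1}$, $|a_2^i|\le (i+1)C_\kappa^i$ and $|a_3^i|\le \tfrac12(i+1)(i+2)C_\kappa^i C_\kappa'$. For the $\partial_\theta$-derivatives that appear in $\widehat F_j^2,\widehat F_j^3$ I would use Cauchy's integral formula inside a slightly smaller sub-strip $S(\Theta')$ with $\Theta' < \Theta$, which converts a $\theta$-derivative of order $m$ of the inductive bound $C_U\gamma_U^{k}(1+k+\widehat\rho)^{k}e^{-\widehat\rho}$ into a factor $m!\,(\Theta-\Theta')^{-m}$, at the cost of shrinking the strip. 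Summing the resulting series in $k$ produces an estimate for $\widehat F_j$ of the schematic form $|\widehat F_j(s,\theta)|\lesssim A\,\gamma_U^{j-1}(1+j+s)^{j}\,e^{-s}$ on $S(\Theta')$, where the extra polynomial factor of $s$ from $\widehat\rho^k$ is absorbed into $(1+j+s)^j$.

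The heart of the argument is then the convolution estimate
\[
\int_0^\infty \mathcal{G}(\widehat\rho,s)(1+j+s)^{j}e^{-s}\,ds \;\lesssim\; \gamma_*\,(1+j+\widehat\rho)^{j}\,e^{-\widehat\rho},
\]
for an absolute constant $\gamma_*$, which can be obtained by splitting the integral at $s=\widehat\rho$ and using $e^{-|\widehat\rho-s|}e^{-s}=e^{-\widehat\rho}$ on $s\le\widehat\rho$ and the monotone-decay factor on $s\ge\widehat\rho$. Combined with the $G_j(\theta)e^{-\widehat\rho}$ term (controlled by (\ref{MSdata})) and choosing $\gamma_U$ larger than $\gamma_G$, $\gamma_*$ and the coefficient bounds, the induction closes and yields (\ref{MSbound1}). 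The main obstacle here is the combinatorial bookkeeping that turns the three sums defining $\widehat F_j^{1,2,3}$ (with polynomial weights $\widehat\rho^k$ and prefactors $(k+1)(k+2)$) into the clean bound $\gamma_U^j(1+j+\widehat\rho)^j e^{-\widehat\rho}$; one must verify that the constants accumulated from the coefficient bounds and from Cauchy's formula can all be absorbed into a single $\gamma_U$ independent of $j$.

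Finally, the pointwise bound (\ref{MSbound3}) follows from (\ref{MSbound1}) by the elementary inequality
\[
(1+j+\widehat\rho)^j\,e^{-(1-\alpha)\widehat\rho}\;\lesssim\;\Bigl(\tfrac{j}{1-\alpha}\Bigr)^{j},
\]
obtained by optimising the left-hand side in $\widehat\rho$ (the maximum is near $\widehat\rho\sim j/(1-\alpha)$) and using Stirling in the form $j^j\lesssim e^j j!$ if needed. For the derivative estimate (\ref{MSbound2}) I would apply Cauchy's integral formula in $\theta$ on discs of radius $\Theta/2$ inside $S(\Theta)$, which produces the factor $q!(2/\Theta)^q$, and deal with the $\rho$-derivatives by repeatedly using the ODE $\partial_{\widehat\rho}^2 U_j = U_j - \widehat F_j$ to express $\partial_\rho^p U_j = \varepsilon^{-p}\partial_{\widehat\rho}^p U_j$ in terms of $U_j$, $\partial_{\widehat\rho}U_j$ and derivatives of $\widehat F_j$; the first $\widehat\rho$-derivative is controlled directly by the Green's function representation (differentiating $\mathcal{G}$ in $\widehat\rho$ preserves the exponential decay), and the factor $e^{(1-\alpha)p}$ is the loss one incurs when balancing $\widehat\rho$-derivatives against the exponential $e^{-\alpha\widehat\rho}$.
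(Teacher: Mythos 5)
The paper's own ``proof'' of this lemma is just a pointer to Lemmas~2.8, 2.9, 2.11 and the proof of estimate~(2.24) in Melenk--Schwab~\cite{ms1}, so there is nothing in-paper to compare against line by line. Your sketch does reconstruct the main ingredients of that reference --- the half-line Green's function $\mathcal{G}(\widehat\rho,s)=\tfrac12(e^{-|\widehat\rho-s|}-e^{-(\widehat\rho+s)})$ with the explicit $G_j(\theta)e^{-\widehat\rho}$ term, induction on $j$, analytic continuation of $\kappa_+,\kappa_+'$ to a complex strip, Cauchy's formula for $\theta$-derivatives, the elementary optimisation over $\widehat\rho$ to pass from (\ref{MSbound1}) to (\ref{MSbound3}), and the ODE-plus-Cauchy bootstrap for (\ref{MSbound2}) --- and that is indeed the route taken in~\cite{ms1}.

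There is, however, one place where your plan as written does not close, and it is precisely the point that requires care in~\cite{ms1}. You propose to bound $\partial_\theta^2\widehat U_{j-2-k}$ and $\partial_\theta\widehat U_{j-3-k}$ in $\widehat F_j^2,\widehat F_j^3$ by Cauchy's formula ``inside a slightly smaller sub-strip $S(\Theta')$ with $\Theta'<\Theta$.'' But the induction hypothesis gives (\ref{MSbound1}) for $k<j$ on $S(\Theta)$, and applying Cauchy only yields the conclusion for $\widehat U_j$ on the strictly smaller strip $S(\Theta')$; at step $j+1$ you would then need $\widehat U_j$ on a strip larger than where you just proved it. Shrinking by a fixed $\Theta-\Theta'>0$ each step exhausts the strip after finitely many steps. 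To close the induction one must either (i) shrink by a summable amount $r_j$ (e.g.\ $r_j\asymp\Theta/j^2$) and absorb the resulting $r_j^{-2}$ factors into $\gamma_U^j(1+j)^j$, so that the strip width $\Theta-\sum_k r_k$ stays bounded away from zero, or (ii) run a two-parameter induction that simultaneously bounds $\partial_\theta^q\widehat U_j$ for all $q$ with a factor $q!(2/\Theta)^q$ --- essentially proving (\ref{MSbound1}) and (\ref{MSbound2}) together --- so that no strip is ever lost. The latter is closer to what Melenk--Schwab do. You correctly flag ``the combinatorial bookkeeping'' as the obstacle, but the uniform-strip issue is the concrete reason the naive induction fails and should be made explicit. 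A minor further point: your convolution estimate $\int_0^\infty\mathcal{G}(\widehat\rho,s)(1+j+s)^j e^{-s}\,ds\lesssim(1+j+\widehat\rho)^j e^{-\widehat\rho}$ picks up an extra factor of $(1+j+\widehat\rho)$ from the $s\le\widehat\rho$ part; this is consistent with the power count only if you feed in the bound for $\widehat F_j$ with exponent $j-1$ rather than $j$, which in turn requires tracking the $\widehat\rho^k$ weights in (\ref{eq2.16}) against $(1+(j-1-k)+s)^{j-1-k}$ --- worth spelling out, since it is exactly where the exponent $j$ in (\ref{MSbound1}) is generated.
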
%
\begin{proof} This is essentially a combination of Lemmas 2.9 and 2.11 in \cite{ms1}.
Estimate (\ref{MSbound1}) follows directly from Lemma 2.11 in \cite{ms1}, while (\ref{MSbound3})
follows from (\ref{MSbound1}) and Lemma 2.8 in \cite{ms1}. Finally, (\ref{MSbound2}) follows from Cauchy's 
integral formula, in exactly the same way as in the proof of (2.24) in \cite{ms1}.
%
%
%
\end{proof}

\begin{theorem}
\label{thm_Vij} Let $V_{j}^{-}$ satisfy (\ref{Vjminus_odd}), 
(\ref{Vjminus_even}) and $\widehat{V}_{j}^{+}$ satisfy (\ref{Vjplus_odd}), 
(\ref{Vjplus_even}) . Then there exist constants $C,\gamma ,K,\Theta >0$
depending only on the data, such that 
\begin{equation}
\left\vert V_{j}^{-}\right\vert _{k,\Omega _{-}}\lesssim
k!C^{k+1}j^{j}\gamma ^{j},  \label{Vjminus_bound}
\end{equation}
while for $\theta \in \mathbb{T}_{L_{\Sigma }},\rho \in \lbrack 0,\rho
_{\Sigma }],\alpha \in \lbrack 0,1),$ 
\begin{equation}
\left\vert \widehat{V}_{j}^{+}(\rho /\varepsilon ,\theta )\right\vert
\lesssim K^{j}j^{j}\left( 1-\alpha \right) ^{-j}e^{-\alpha \rho /\varepsilon
},  \label{Vjplus_bound}
\end{equation}
and
\begin{equation}
\left\vert \partial _{\rho }^{p}\partial _{\theta }^{q}\widehat{V}%
_{j}^{+}(\rho /\varepsilon ,\theta )\right\vert \lesssim \varepsilon
^{-p}e^{(1-\alpha )p}\left( p+1\right) ^{1/2}q!\left( 2/\Theta \right)
^{q}K^{j}j^{j}\left( 1-\alpha \right) ^{-j}e^{-\alpha \rho /\varepsilon },
\label{Vjplus_bound_derivatives}
\end{equation}
for $p,q\in \mathbb{N}_{0}$ and $\theta \in S(\Theta )$ given by (\ref{Stheta}).
\end{theorem}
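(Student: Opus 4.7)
The plan is to proceed by strong induction on $j$, alternating between estimating $V_j^-$ on $\Omega_-$ and $\widehat{V}_j^+$ on $\mathbb{R}_+\times \mathbb{T}_{L_\Sigma}$. For the base case $j=0$, I would use that $u_0^-$ is analytic (Section \ref{smooth}), so the Dirichlet--Neumann problem (\ref{V0minus}) has analytic data and the classical analytic regularity of \cite{GLC} gives (\ref{Vjminus_bound}) for $j=0$. The function $\widehat{V}_0^+$ then satisfies the one-dimensional ODE (\ref{V0plus}) whose Dirichlet datum $G_0(\theta) = V_0^-|_\Sigma - (u_0^+-u_0^-)|_\Sigma$ is analytic on a complex strip $S(\Theta_0)$ (by a trace argument), so Lemma \ref{lemmaMS} with $F_0=0$ delivers (\ref{Vjplus_bound}) and (\ref{Vjplus_bound_derivatives}) at level $0$.

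For the inductive step, I would first bound $V_j^-$ assuming the estimates hold for $\widehat{V}_i^+$ with $i<j$. The governing problem (\ref{Vjminus_odd}) or (\ref{Vjminus_even}) is a Dirichlet--Neumann problem in $\Omega_-$ whose Neumann datum on $\Sigma$ involves $\partial_{\widehat{\rho}}\widehat{V}_{j-1}^+(0,\theta)$ plus, for even $j$, analytic terms arising from $\partial_\rho u_j^\pm$. Evaluating (\ref{Vjplus_bound_derivatives}) at $\rho=0$ with $p=1$ and arbitrary $q$ gives the analytic-in-$\theta$ estimate
\[
|\partial_\theta^q \partial_{\widehat{\rho}}\widehat{V}_{j-1}^+(0,\theta)| \lesssim q!(2/\Theta)^q K^{j-1}(j-1)^{j-1}(1-\alpha)^{-(j-1)},
\]
and the corresponding $H^{k+1/2}(\Sigma)$ norms can be controlled by a Sobolev-embedding argument as in the derivation of (\ref{eq13}). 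Feeding these into the analytic regularity estimate of \cite{GLC}, exactly in the style used for $u_{2j}^-$ in Section \ref{smooth}, produces (\ref{Vjminus_bound}).

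Second, I would bound $\widehat{V}_j^+$ by applying Lemma \ref{lemmaMS} to the problem (\ref{Vjplus_odd}) or (\ref{Vjplus_even}). The source $\widehat{F}_j^1+\widehat{F}_j^2+\widehat{F}_j^3$ already has the structure (\ref{eq2.15})--(\ref{eq2.16}) required by the lemma (with $\widehat{U}$ replaced by $\widehat{V}^+$), and the Dirichlet datum is $G_j(\theta) = V_j^-|_\Sigma - (u_j^+-u_j^-)|_\Sigma$ (even $j$) or $V_j^-|_\Sigma$ (odd $j$). Combining the $V_j^-$ bound just obtained with its extension to analyticity on a complex strip in $\theta$, together with the analyticity of $u_j^\pm|_\Sigma$ from Section \ref{smooth}, yields the hypothesis $|G_j(\theta)| \lesssim C_G\gamma_G^j j^j$ of (\ref{MSdata}). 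Lemma \ref{lemmaMS} then gives (\ref{Vjplus_bound}) and (\ref{Vjplus_bound_derivatives}), completing the induction.

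The main technical obstacle is the bookkeeping of constants: the $j^j$ factor on the right-hand sides of (\ref{Vjminus_bound})--(\ref{Vjplus_bound_derivatives}) must propagate consistently through the induction. The transition from $(j-1)^{j-1}$ in the Neumann datum to $j^j$ in the estimate for $V_j^-$ requires a combinatorial argument analogous to the one involving $\binom{\ell+2j}{\ell}(1+\gamma)^{\ell+2j}$ in Section \ref{smooth}, and Lemma \ref{lemmaMS} contributes an additional $j^j$-type factor that must be absorbed into the same universal constants $C,\gamma,K,\Theta$. Ensuring that these constants can be chosen uniformly in $j$ and, crucially, independently of $\varepsilon$ is the delicate part of the proof.
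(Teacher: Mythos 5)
Your proposal is correct and follows essentially the same strategy as the paper: induction on $j$, alternating between the analytic regularity estimate of \cite{GLC} for the Dirichlet--Neumann problems in $\Omega_-$ and an application of Lemma~\ref{lemmaMS} for the one-dimensional boundary-layer ODEs in $\widehat\rho$, with the crux being the verification of the hypothesis (\ref{MSdata}) on the Dirichlet datum at each level. The paper additionally treats $j=1$ explicitly and splits the inductive step into odd/even cases to track the different structure of the transmission conditions, but the underlying argument and the bookkeeping of the $j^{j}\gamma^{j}$ factors are the same as in your sketch.
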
%
\begin{proof} The proof is by induction on $j$. First we note that estimates (%
\ref{Vjplus_bound}), (\ref{Vjplus_bound_derivatives}) follow from Lemma \ref%
{lemmaMS}, provided we show that (\ref{MSdata}) is satisfied, i.e. on $%
\Sigma $ the functions $\widehat{V}_{j}^{+}$ are bounded by $Cj^{j}\gamma
^{j}$ for suitable constants $C,\gamma >0$. This will be verified during our induction argument; in fact it will be
the only thing we will show for $\widehat{V}_{j}^{+}$, with the
understanding that an application of Lemma \ref{lemmaMS} gives the desired result.

For $j=0$ we see from the variational formulation of (\ref{V0minus}) that $%
\left\Vert V_{0}^{-}\right\Vert _{1,\Omega _{-}}\lesssim \left\Vert
u_{0}^{-}\right\Vert _{1,\Omega _{-}}$, hence by (\ref{eq13}) and \cite{GLC} 
\begin{eqnarray*}
\frac{1}{k!}\left\vert V_{0}^{-}\right\vert _{k,\Omega _{-}} &\leq
&C^{k+1}\left\{ \underset{\ell =0}{\overset{k-2}{\dsum }}\frac{1}{\ell !}%
\left\Vert \frac{\partial u_{0}^{-}}{\partial y}\right\Vert _{\ell +\frac{1}{%
2},\Sigma }+\left\Vert V_{0}^{-}\right\Vert _{1,\Omega _{-}}\right\} \\
&\lesssim &C^{k+1}\left\{ \underset{\ell =1}{\overset{k}{\dsum }}\frac{1}{%
\ell !}\left\Vert u_{0}^{-}\right\Vert _{\ell +1,\Sigma }+\left\Vert
u_{0}^{-}\right\Vert _{1,\Omega _{-}}\right\} \\
&\lesssim &C^{k+1}\left\{ \underset{\ell =1}{\overset{k}{\dsum }}\frac{1}{%
\ell !}C_{1}^{\ell +1}(\ell +1)!\right\} \\
&\lesssim &C^{k+1}.
\end{eqnarray*}
Next, for $\widehat{V}_{0}^{+}$ we see from (\ref{V0plus}) that $\widehat{V}%
_{0}^{+}(\rho ,\theta )=G_{0}\left( \theta \right) e^{-\rho /\varepsilon }$
for some function $G_{0}\left( \theta \right) $ that depends on $%
V_{0}^{-},u_{0}^{+},u_{0}^{-}.$ By the above, (\ref{uj+}) and (\ref{eq13}),
we have that in the case $j=0$, the boundary data for $\widehat{V}_{j}^{+} $
is bounded by $Cj^{j}\gamma ^{j}$ for suitable constants $C,\gamma >0$,
hence by Lemma \ref{lemmaMS} the bounds (\ref{Vjplus_bound}), (\ref{Vjplus_bound_derivatives}) hold for 
$\widehat{V}_{0}^{+}$.

Now, from the variational formulation of (\ref{V1minus}) and the fact that $%
V_{0}^{+}(\hat{\rho},\theta )=G_{0}(\theta )e^{-\hat{\rho}},$ we have $\frac{%
\partial }{\partial \hat{\rho}}V_{0}^{+}(\hat{\rho},\theta )=-G_{0}(\theta
)e^{-\hat{\rho}}$ and then $\frac{\partial }{\partial \hat{\rho}}%
V_{0}^{+}(0,\theta )=-G_{0}(\theta ),$ hence

\begin{equation*}
\int_{\Omega _{-}}(\nabla V_{1}^{-}\cdot \nabla
V+V_{1}^{-}V)\,dx=-\int_{\Sigma }G_{0}V\,dx\;\forall \;V\in H_{\ast
}^{1}(\Omega _{-}),
\end{equation*}
where 
\begin{equation}
H_{\ast }^{1}(\Omega _{-})=\left\{ u\in H^{1}\left( \Omega _{-}\right)
:\left. u\right\vert _{\partial \Omega _{-}\backslash \Sigma }=0\right\} .
\label{H1*}
\end{equation}
Thus, 
\begin{eqnarray*}
\left\Vert V_{1}^{-}\right\Vert _{1,\Omega _{-}} &\lesssim & \left\Vert
G_{0}\right\Vert _{0,\Sigma }= \left\Vert \widehat{V}_{0}^{+}\right\Vert
_{0,\Sigma }=\left\Vert V_{0}^{-}-(u_{0}^{+}-u_{0}^{-})\right\Vert
_{0,\Sigma } \\
&\leq &\left\Vert V_{0}^{-}\right\Vert _{0,\Sigma }+\left\Vert
u_{0}^{+}\right\Vert _{0,\Sigma }+\left\Vert u_{0}^{-}\right\Vert _{0,\Sigma
}\leq C_{1}\in \mathbb{R}^{+}.
\end{eqnarray*}
From \cite{GLC} and the above result, we get 
\begin{eqnarray*}
\frac{1}{k!}\left\vert V_{1}^{-}\right\vert _{k,\Omega _{-}} &\leq
&C^{k+1}\left\{ \underset{\ell =0}{\overset{k-2}{\dsum }}\frac{1}{\ell !}%
\left\Vert \frac{\partial \widehat{V}_{0}^{+}}{\partial \rho }\right\Vert
_{\ell +\frac{1}{2},\Sigma }+\left\Vert V_{1}^{-}\right\Vert _{1,\Omega
_{-}}\right\} \\
&\lesssim &C^{k+1}\left\{ \underset{\ell =0}{\overset{k-2}{\dsum }}\frac{1}{%
\ell !}\left\Vert G_{0}\right\Vert _{\ell +\frac{1}{2},\Sigma }+C_{1}\right\}
\\
&\lesssim &C^{k+1}\left\{ \underset{\ell =0}{\overset{k-2}{\dsum }}\frac{1}{%
\ell !}\left( \left\Vert V_{0}^{-}\right\Vert _{\ell +\frac{1}{2},\Sigma
}+\left\Vert u_{0}^{+}\right\Vert _{\ell +\frac{1}{2},\Sigma }+\left\Vert
u_{0}^{-}\right\Vert _{\ell +\frac{1}{2},\Sigma }\right) +C_{1}\right\} \\
&\lesssim &C^{k+1}\left\{ \underset{\ell =1}{\overset{k}{\dsum }}\frac{1}{%
\ell !}(\ell +1)!C^{\ell +1}+C_{1}\right\} ,
\end{eqnarray*}
which leads to 
\begin{equation}
\left\vert V_{1}^{-}\right\vert _{k,\Omega _{-}}\lesssim C^{k+1}k!.
\label{V1minus_k_norm}
\end{equation}
In an analogous way as $\widehat{V}_{0}^{+}(\widehat{\rho },\theta
)=G_{0}\left( \theta \right) e^{-\widehat{\rho }}$, we find that $\widehat{V}%
_{1}^{+}(\widehat{\rho },\theta )=-\frac{\widehat{\rho }}{2}a_{1}^{0}(\theta
)e^{-\widehat{\rho }}+G_{1}\left( \theta \right) e^{-\widehat{\rho }},$ for
some function $G_{1}\left( \theta \right) $ that depends on $V_{1}^{-},$
hence in view of (\ref{V1minus_k_norm}) we see that the boundary data for $%
\widehat{V}_{1}^{+}$ satisfy the appropriate bound. As a result, (\ref%
{Vjplus_bound})--(\ref{Vjplus_bound_derivatives}) hold for $\widehat{V}%
_{1}^{+}$ as well.

So, we assume that (\ref{Vjminus_bound})--(\ref{Vjplus_bound_derivatives})
hold for $j$ and we will establish them for $j+1$. 

\noindent \textbf{The case of odd $j$:} \ If $j$ is odd, then $j+1$ is even
and we would like to establish bounds for $V_{2s}^{-}$ and $\widehat{V}%
_{2s}^{+}$ (with $2s=j+1$), which satisfy (\ref{Vjminus_even}), (\ref%
{Vjplus_even}) respectively. First, for $V_{2s}^{-}$ we see from the
variational formulation of (\ref{Vjminus_even}) that 
\begin{equation*}
\left\Vert V_{2s}^{-}\right\Vert _{1,\Omega _{-}}\lesssim \left\Vert \frac{%
\partial }{\partial \widehat{\rho }}\widehat{V}_{2s-1}^{+}\right\Vert
_{0,\Sigma }+\left\Vert \frac{\partial }{\partial \rho }u_{2s}^{-}\right\Vert _{0,\Sigma }+\left\Vert \frac{\partial }{\partial \rho }%
u_{2s-2}^{+}\right\Vert _{0,\Sigma },
\end{equation*}%
hence by (\ref{uj+}), (\ref{eq13}), a trace theorem and the induction
hypothesis, we have 
\begin{equation*}
\left\Vert V_{2s}^{-}\right\Vert _{1,\Omega _{-}}\lesssim
K^{2s-1}(2s-1)^{2s-1}+C_{u^{-}}\gamma _{u^{-}}^{2s}(2s)!+C_{f_{+}}\gamma
_{f_{+}}^{2s}(2s)!\lesssim C(2s)!\gamma ^{2s},
\end{equation*}%
for suitable constants $C,\gamma >0$ independent of $s$. Therefore, from 
\cite{GLC} we obtain for $k\geq 2$ 
\begin{eqnarray}
\frac{1}{k!}\left\vert V_{2s}^{-}\right\vert _{k,\Omega _{-}} &\leq
&C^{k+1}\left\{ \underset{\ell =0}{\overset{k-2}{\dsum }}\frac{1}{\ell !}%
\left( \left\Vert \frac{\partial }{\partial \widehat{\rho }}\widehat{V}%
_{2s-1}^{+}\right\Vert _{\ell +\frac{1}{2},\Sigma }+\left\Vert \frac{%
\partial }{\partial \rho }u_{2s}^{-}\right\Vert _{\ell +\frac{1}{2},\Sigma
}+\left\Vert \frac{\partial }{\partial \rho }u_{2s-2}^{+}\right\Vert _{\ell +%
\frac{1}{2},\Sigma }\right) +\left\Vert V_{2s}^{-}\right\Vert _{1,\Omega
_{-}}\right\}   \notag \\
&\lesssim &C^{k+1}\left\{ \underset{\ell =0}{\overset{k-2}{\dsum }}\frac{1}{%
\ell !}\left( \left\Vert \widehat{V}_{2s-1}^{+}\right\Vert _{\ell +\frac{1}{2%
},\Sigma }+\left\Vert u_{2s}^{-}\right\Vert _{\ell +2,\Omega
_{-}}+\left\Vert u_{2s-2}^{+}\right\Vert _{\ell +2,\Omega _{+}}\right)
+(2s)!\gamma ^{2s}\right\} .  \label{V2sminus_knorm}
\end{eqnarray}%
Now, $u_{2s-2}^{+}=$ $\Delta ^{(2s)}f_{+}$ (see eq. (\ref{uj+})), hence
using (\ref{assf}) we have%
\begin{eqnarray*}
\left\Vert u_{2s-2}^{+}\right\Vert _{\ell +1,\Omega _{+}} &=&\left\Vert
\Delta ^{(2s)}f^{+}\right\Vert _{\ell +1,\Omega _{+}}\lesssim \underset{\ell
=1}{\overset{k}{\dsum }}\frac{1}{\ell !}\underset{\left\vert \alpha
\right\vert \leq \ell +1}{\dsum }\left\Vert D^{\alpha }\Delta
^{(2s)}f^{+}\right\Vert _{\infty ,\Omega _{+}} \\
&\lesssim &\underset{\ell =1}{\overset{k}{\dsum }}\frac{1}{\ell !}\underset{%
\left\vert \alpha \right\vert \leq \ell +1}{\dsum }\gamma _{f}^{\left\vert
\alpha \right\vert +2s}\left( \left\vert \alpha \right\vert +2s\right)
!\lesssim \underset{\ell =1}{\overset{k}{\dsum }}\ell ^{2}\gamma _{f}^{\ell
+2s}\left( \ell +2s\right) ! \\
&\lesssim &k^{2}\left( 2s\right) !\underset{\ell =1}{\overset{k}{\dsum }}%
\gamma _{f}^{k+2s}\binom{\ell +2s}{\ell }\lesssim \left( 2s\right) !\left(
1+\gamma _{f}\right) ^{k+2s}\gamma _{f}^{2s} \\
&\lesssim &\left( 2s\right) !\widetilde{\gamma }^{2s},
\end{eqnarray*}%
for suitable $\widetilde{\gamma }>0$ independent of $\varepsilon $. Also, by
(\ref{Vjplus_even}) we get%
\begin{eqnarray*}
\left\Vert \widehat{V}_{2s-1}^{+}\right\Vert _{\ell +\frac{1}{2},\Sigma }
&=&\left\Vert V_{2s-1}^{-}-\left( u_{2s-1}^{+}-u_{2s-1}^{-}\right)
\right\Vert _{\ell +\frac{1}{2},\Sigma } \\
&\leq &\left\Vert V_{2s-1}^{-}\right\Vert _{\ell +1,\Omega _{-}}+\left\Vert
u_{2s-1}^{+}\right\Vert _{\ell +1,\Omega _{+}}+\left\Vert
u_{2s-1}^{-}\right\Vert _{\ell +1,\Omega _{-}}.
\end{eqnarray*}%
Equation (\ref{uj+}) gives $u_{2s-1}^{+}=0,$ and by (\ref{uj+}), (\ref{eq13}%
) and the induction hypothesis, we obtain%
\begin{equation*}
\left\Vert \widehat{V}_{2s-1}^{+}\right\Vert _{\ell +\frac{1}{2},\Sigma
}\lesssim (\ell +1)!C^{\ell +1}(2s-1)^{2s-1}\gamma ^{2s-1}.
\end{equation*}%
\ Thus, (\ref{V2sminus_knorm}) becomes%
\begin{eqnarray*}
\frac{1}{k!}\left\vert V_{2s}^{-}\right\vert _{k,\Omega _{-}} &\lesssim
&C^{k+1}\underset{\ell =1}{\overset{k}{\dsum }}\frac{1}{\ell !}\left( (\ell
+1)!C^{\ell +1}(2s-1)^{2s-1}\gamma ^{2s-1}+C_{u^{-}}^{\ell +1}\left( \ell
+1\right) !(2s)!\gamma _{u^{-}}^{2s}+\left( 2s\right) !\widetilde{\gamma }%
_{f}^{2s}\right) + \\
&&+C^{k+1}(2s)!\gamma ^{2s} \\
&\lesssim &C_{1}^{k+1}(2s)^{2s}\gamma _{1}^{2s},
\end{eqnarray*}%
for suitable constants $C_{1},\gamma _{1}>0\ $independent of $\varepsilon $.
This establishes (\ref{Vjminus_bound}); to establish (\ref{Vjplus_bound})--(%
\ref{Vjplus_bound_derivatives}) we will simply check that the boundary data
in (\ref{Vjplus_even}) satisfies the appropriate bound (so that we may apply
Lemma \ref{lemmaMS}). Since 
\begin{equation*}
\widehat{V}_{2s}^{+}=V_{2s}^{-}-\left( u_{2s}^{+}-u_{2s}^{-}\right) \text{
on }\Sigma ,
\end{equation*}%
we see that for $\;\theta \in \lbrack 0,L_{\Sigma }],$ (cf. (\ref%
{Vjplus_even})) 
\begin{eqnarray*}
\left\vert \widehat{V}_{2s}^{+}\left( 0,\theta \right) \right\vert  &\leq
&\left\vert V_{2s}^{-}\left( 0,\theta \right) \right\vert +\left\vert
u_{2s}^{+}\left( 0,\theta \right) \right\vert +\left\vert u_{2s}^{-}\left(
0,\theta \right) \right\vert  \\
&\lesssim &\widehat{C}(2s)^{2s}\gamma ^{2s},
\end{eqnarray*}%
which is the bound that allows us to apply Lemma \ref{lemmaMS} and conclude that for $\widehat{V}%
_{2j}^{+}\left( \widehat{\rho },\theta \right) $, the estimates (\ref%
{Vjplus_bound}), (\ref{Vjplus_bound_derivatives}) hold as desired.

\noindent \textbf{The case of even $j$:} \ If $j$ is even, then $j+1$ is odd
and we would like to establish bounds for $V_{2s+1}^{-}$ and $\widehat{V}%
_{2s+1}^{+}$ (with $2s+1=j+1$), which satisfy (\ref{Vjminus_odd}), (\ref%
{Vjplus_odd}) respectively. First, for $V_{2s+1}^{-}$ we see from the
variational formulation of (\ref{Vjminus_odd}) that 
\begin{equation*}
\left\Vert V_{2s+1}^{-}\right\Vert _{1,\Omega _{-}}\lesssim \left\Vert 
\widehat{V}_{2s}^{+}\right\Vert _{0,\Sigma }\lesssim C_{+}(2s)^{2s}\gamma
^{2s}\leq C_{+}(2s+1)^{2s+1}\gamma ^{2s+1},
\end{equation*}
and, in a similar fashion as above, we obtain 
\begin{eqnarray*}
\frac{1}{k!}\left\vert V_{2s+1}^{-}\right\vert _{k,\Omega _{-}} &\leq
&C^{k+1}\left\{ \underset{\ell =0}{\overset{k-2}{\dsum }}\frac{1}{\ell !}%
\left\Vert \widehat{V}_{2s}^{+}\right\Vert _{\ell +1/2,\Sigma }+\left\Vert
V_{2s+1}^{-}\right\Vert _{1,\Omega _{-}}\right\} \\
&\lesssim &C^{k+1}\left\{ \underset{\ell =0}{\overset{k-2}{\dsum }}\frac{%
(\ell+1)!}{\ell !}\widehat{C}^{\ell+1}(2s)^{2s}\gamma
^{2s}+C_{+}(2s+1)^{2s+1}\gamma ^{2s+1}\right\} \\
&\lesssim &C_{2}^{k+1}(2s+1)^{2s+1}\gamma _{2}^{2s+1},
\end{eqnarray*}
for suitable constants $C_{2},\gamma _{2}>0$ independent of $\varepsilon $.
Finally, from the above result we see that the boundary data of (\ref%
{Vjplus_odd}) satisfies the appropriate bound, hence by Lemma \ref{lemmaMS}, $\widehat{V}_{2s+1}^{+}$
satisfies (\ref{Vjplus_bound}) and (\ref{Vjplus_bound_derivatives}) as
desired. \end{proof}

In view of the previous theorem, we define the (truncated) interface layer
expansion(s) as 
\begin{equation}
u_{IL}^{\varepsilon }:=\left( \widehat{v}_{I,M}^{+},v_{I,M}^{-}\right)
\label{ueIL}
\end{equation}%
where 
\begin{equation}
\widehat{v}_{I,M}^{+}=\underset{j=0}{\overset{2M+1}{\dsum }}\varepsilon ^{j}%
\widehat{V}_{j}^{+}\;,\;v_{I,M}^{-}=\underset{j=0}{\overset{2M+1}{\dsum }}%
\varepsilon ^{j}V_{j}^{-}.  \label{vIM}
\end{equation}%
The following corollary follows from Theorem \ref{thm_Vij}.

\begin{corollary}
\label{thm_vIM}
There exist constants $C,\gamma ,\Theta ,K>0$ depending only on the data, such that under the assumption
$\varepsilon (2M+1) \max \{\gamma ,K\}<1,$ the functions 
$\widehat{v}_{I,M}^{+},v_{I,M}^{-}$ defined by (\ref{vIM}) satisfy
\begin{equation*}
\left\vert v_{I,M}^{-}\right\vert _{k,\Omega _{-}}\lesssim C^{k+1}k!,
\end{equation*}
\begin{equation*}
\left\vert \partial _{\rho }^{p}\partial _{\theta }^{q}\widehat{v}%
_{I,M}^{+}(\rho ,\theta )\right\vert \lesssim \varepsilon
^{-p}e^{p}(p+1)^{1/2}q!\left( \frac{2}{\Theta }\right) ^{q},
\end{equation*}
for $p,q\in \mathbb{N}_{0},\rho \in \lbrack 0,\rho _{\Sigma }]$ and $\theta
\in S(\Theta )$ given by (\ref{Stheta}).
\end{corollary}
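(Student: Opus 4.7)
The proof is a direct summation argument: the termwise bounds on $V_j^-$ and $\widehat V_j^+$ established in Theorem~\ref{thm_Vij} combine with the hypothesis $\varepsilon(2M+1)\max\{\gamma,K\}<1$ to yield the claim. The mechanism is that truncating at $j=O(1/\varepsilon)$ forces the super-exponential factor $j^j$ in those termwise bounds to be absorbed by $\varepsilon^j$, so what remains is a convergent geometric series.

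For the interior part, I apply the triangle inequality to the definition (\ref{vIM}) of $v_{I,M}^-$ and invoke (\ref{Vjminus_bound}):
\[
|v_{I,M}^-|_{k,\Omega_-} \leq \sum_{j=0}^{2M+1} \varepsilon^j |V_j^-|_{k,\Omega_-} \lesssim k!\, C^{k+1} \sum_{j=0}^{2M+1} (\varepsilon\gamma j)^j.
\]
For $j\leq 2M+1$ the hypothesis gives $\varepsilon\gamma j \leq \mu := \varepsilon(2M+1)\gamma < 1$, hence $(\varepsilon\gamma j)^j \leq \mu^j$ and the sum is dominated by $\sum_{j\geq 0} \mu^j = (1-\mu)^{-1}$. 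Absorbing this constant into $C$ gives the stated bound.

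For the layer part, the same idea is combined with (\ref{Vjplus_bound_derivatives}). The cleanest choice of the free parameter is $\alpha=0$: then $e^{(1-\alpha)p}$ becomes $e^p$ and $e^{-\alpha\rho/\varepsilon}$ becomes $1$, matching the target prefactor exactly. Summing the termwise bound then yields
\[
|\partial_\rho^p \partial_\theta^q \widehat v_{I,M}^+(\rho,\theta)| \leq \sum_{j=0}^{2M+1} \varepsilon^j |\partial_\rho^p \partial_\theta^q \widehat V_j^+(\rho/\varepsilon,\theta)| \lesssim \varepsilon^{-p} e^p (p+1)^{1/2} q!\,(2/\Theta)^q \sum_{j=0}^{2M+1}(\varepsilon K j)^j,
\]
and the assumption $\varepsilon(2M+1)K<1$ again makes the $j$-sum a bounded geometric series, producing a constant that can be absorbed.

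I do not expect any serious obstacle: all the genuinely hard analysis — recursively constructing the $V_j^\pm$, tracking the propagation of analyticity through each elliptic sub-problem, and producing the sharp $j^j$ growth — has already been done in Theorem~\ref{thm_Vij}. The one bookkeeping point worth flagging is that the uniform geometric-series bound requires the hypothesis to hold with a definite gap below $1$; equivalently one reads the assumption as $\varepsilon(2M+1)\max\{\gamma,K\}\leq \mu$ for some fixed $\mu<1$, which is the standard convention when truncating at $M\sim 1/\varepsilon$.
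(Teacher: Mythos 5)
Your proof is correct and follows the same route as the paper: triangle inequality on the truncated sums, the termwise bounds of Theorem~\ref{thm_Vij} with $\alpha=0$ in (\ref{Vjplus_bound_derivatives}), and absorption of the resulting geometric series using $\varepsilon(2M+1)\max\{\gamma,K\}<1$. Your closing remark about needing the hypothesis with a definite gap below $1$ (so the geometric constant $(1-\mu)^{-1}$ stays bounded as $M\to\infty$) is a valid clarification of a point the paper leaves implicit.
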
%
\begin{proof}
By (\ref{vIM}) and Theorem \ref{thm_Vij} we have
\begin{equation*}
\left\vert v_{I,M}^{-}\right\vert _{k,\Omega _{-}}\leq \underset{j=0}{%
\overset{2M+1}{\dsum }}\varepsilon ^{j}\left\vert V_{j}^{-}\right\vert
_{k,\Omega _{-}}\lesssim \underset{j=0}{\overset{2M+1}{\dsum }}\varepsilon
^{j}k!C^{k+1}j^{j}\gamma ^{j}\lesssim k!C^{k+1}\underset{j=0}{\overset{2M+1}{%
\dsum }}\left( \varepsilon (2M+1)\gamma \right) ^{j}\lesssim k!C^{k+1}
\end{equation*}
and
\begin{eqnarray*}
\left\vert \partial _{\rho }^{p}\partial _{\theta }^{q}\widehat{v}%
_{I,M}^{+}(\rho ,\theta )\right\vert  &\leq &\underset{j=0}{\overset{2M+1}{%
\dsum }}\varepsilon ^{j}\left\vert \partial _{\rho }^{p}\partial _{\theta
}^{q}\widehat{V}_{j}^{+}(\rho ,\theta )\right\vert \lesssim \underset{j=0}{%
\overset{2M+1}{\dsum }}\varepsilon ^{j}\varepsilon
^{-p}e^{p}(p+1)^{1/2}q!\left( \frac{2}{\Theta }\right) ^{q}K^{j}j^{j} \\
&\lesssim &\varepsilon ^{-p}e^{p}(p+1)^{1/2}q!\left( \frac{2}{\Theta }%
\right) ^{q}\underset{j=0}{\overset{2M+1}{\dsum }}\left( \varepsilon
K(2M+1)\right) ^{j} \\
&\lesssim &\varepsilon ^{-p}e^{p}(p+1)^{1/2}q!\left( \frac{2}{\Theta }%
\right) ^{q}.
\end{eqnarray*}
\end{proof}

Finally in this section, we wish to see what the contribution of the
interface layers is, to the remainder of the expansion. For the interface
layers in $\Omega _{-}$ we easily see that 
\begin{equation}
-\Delta \left( v_{I,M}^{-}\right) +\left( v_{I,M}^{-}\right) =0.
\label{L1vIMminus}
\end{equation}%
Now, by construction of the functions $\widehat{V}_{2j}^{+}$ we have (with
the aid of (\ref{operator}) and (\ref{sumLepsilon})) 
\begin{eqnarray*}
L_{\varepsilon }\left( \widehat{v}_{I,M}^{+}\right) &=&\underset{i=2M+2}{%
\overset{\infty }{\dsum }}\varepsilon ^{i}\underset{j=0}{\overset{2M+1}{%
\dsum }}L_{i-j}\widehat{V}_{j}^{+} \\
&=&-\underset{j=0}{\overset{2M+1}{\dsum }}\underset{i=2M+2}{\overset{\infty }%
{\dsum }}\varepsilon ^{i}\widehat{\rho }^{i-1-j}a_{1}^{j-1-j}\partial _{%
\widehat{\rho }}\widehat{V}_{j}^{+}-\underset{j=0}{\overset{2M+1}{\dsum }}%
\underset{i=2M+3}{\overset{\infty }{\dsum }}\varepsilon ^{i}\widehat{\rho }%
^{i-2-j}a_{2}^{j-2-j}\partial _{\theta }^{2}\widehat{V}_{j}^{+}- \\
&&-\underset{j=0}{\overset{2M+1}{\dsum }}\underset{i=2M+4}{\overset{\infty }{%
\dsum }}\varepsilon ^{i}\widehat{\rho }^{i-2-j}a_{3}^{j-3-j}\partial
_{\theta }\widehat{V}_{j}^{+}.
\end{eqnarray*}%
By Lemma 2.12 of \cite{ms1}, we have the bound%
\begin{equation}
\left\vert L_{\varepsilon }\widehat{v}_{I,M}^{+}\left( \rho ,\theta \right)
\right\vert \lesssim K^{2M+2}\left( \varepsilon (2M+2)+\left\vert \rho
\right\vert \right) ^{2M+2}e^{-\rho /\varepsilon }\;\forall \;(\rho ,\theta
)\in B_{\rho _{0}}(0)\times S(\Theta ),  \label{L_e_vIPlus}
\end{equation}%
for some $K,\Theta >0$ independent of $\varepsilon $. (As before, $B_{\delta
}(z)$ denotes the open disc in the complex plane of \ radius $\delta $
centered at $z$, and $S(\Theta )$ is given by (\ref{Stheta}).)

\begin{remark}
Corollary \ref{thm_vIM} shows that the interface layer functions in $\Omega _{+}$
behave just like the boundary layers, while the interface layers in $\Omega
_{-}$ are smooth. This will be taken into consideration in the design of
the approximation scheme in Section \ref{approx} ahead.
\end{remark}

\subsection{Remainder estimates}

\label{rem} We now consider the remainder $r^{\varepsilon }\equiv \left(
r_{+}^{\varepsilon },r_{-}^{\varepsilon }\right) $ in the decomposition (\ref%
{decomp}), which is given by 
\begin{equation}
r_{+}^{\varepsilon }=u_{+}^{\varepsilon }-w_{+}^{\varepsilon }-\chi
_{BL}u_{BL}^{\varepsilon }-\chi _{IL}\widehat{v}_{I,M}^{+},
\label{r_epsilon_plus}
\end{equation}
\begin{equation}
r_{-}^{\varepsilon }=u_{-}^{\varepsilon }-w_{-}^{\varepsilon }-\chi
_{IL}v_{I,M}^{-},  \label{r_epsilon_minus}
\end{equation}
and by construction, satisfies the equivalent (but homogeneous) boundary
conditions as $u^{\varepsilon }$ on $\partial \Omega $. To see this note
that on $\partial \Omega _{-}\backslash \Sigma $ we have 
\begin{equation*}
\left. \left( r_{-}^{\varepsilon }\right) \right\vert _{\partial \Omega
_{-}\backslash \Sigma }=\left. \left( u_{-}^{\varepsilon
}-w_{-}^{\varepsilon }-\chi _{IL}v_{I,M}^{-}\right) \right\vert _{\partial
\Omega _{-}\backslash \Sigma }=0,
\end{equation*}
by (\ref{bvpd}), (\ref{u0-b}) and (\ref{uj-b}). \ On $\partial \Omega
_{+}\backslash \Sigma $ we have 
\begin{equation*}
\left. \left( r_{+}^{\varepsilon }\right) \right\vert _{\partial \Omega
_{+}\backslash \Sigma }=\left. \left( u_{+}^{\varepsilon
}-w_{+}^{\varepsilon }-\chi _{BL}u_{BL}^{\varepsilon }-\chi _{IL}\widehat{v}%
_{I,M}^{+}\right) \right\vert _{\partial \Omega _{+}\backslash \Sigma }=0,
\end{equation*}
by (\ref{bvpc}), (\ref{uBL_BC}) and (\ref{chi_IL}). Finally on $\Sigma $ we
have 
\begin{equation*}
\left. \left( r_{+}^{\varepsilon }-r_{-}^{\varepsilon }\right) \right\vert
_{\Sigma }=\left. \left( u_{+}^{\varepsilon }-u_{-}^{\varepsilon
}-w_{-}^{\varepsilon }+w_{+}^{\varepsilon }+\chi _{IL}v_{I,M}^{-}-\chi _{IL}%
\widehat{v}_{I,M}^{+}\right) \right\vert _{\Sigma }=0
\end{equation*}
by (\ref{bvpe}), (\ref{eq29}) and 
\begin{equation*}
\left. \left( \varepsilon ^{2}\frac{\partial r_{+}^{\varepsilon }}{\partial
\nu }-\frac{\partial r_{-}^{\varepsilon }}{\partial \nu }\right) \right\vert
_{\Sigma }=\left. \left( \varepsilon ^{2}\frac{\partial u_{+}^{\varepsilon }%
}{\partial \nu }-\frac{\partial u_{-}^{\varepsilon }}{\partial \nu }-\frac{%
\partial w_{-}^{\varepsilon }}{\partial \nu }+\varepsilon ^{2}\frac{\partial
w_{+}^{\varepsilon }}{\partial \nu }+\frac{\partial v_{I,M}^{-}}{\partial
\nu }-\varepsilon ^{2}\frac{\partial \widehat{v}_{I,M}^{+}}{\partial \nu }%
\right) \right\vert _{\Sigma }=0,
\end{equation*}
by (\ref{bvpf}), (\ref{u0-c}), (\ref{uj-c}) and (\ref{eq29}). Moreover, we
have the following.

\begin{theorem}
\label{thm_rem}
Let $r^{\varepsilon }=\left( r_{+}^{\varepsilon },r_{-}^{\varepsilon
}\right) $ be given by (\ref{r_epsilon_plus})--(\ref{r_epsilon_minus}) and let 
$L_{\varepsilon}r_{+}^{\varepsilon }=-\varepsilon ^{2}\Delta r_{+}^{\varepsilon
}+r_{+}^{\varepsilon }$ and $L_{1}r_{-}^{\varepsilon }=-\Delta
r_{-}^{\varepsilon }+r_{-}^{\varepsilon }$. \ Then there exist constants 
$K_{1},K_{2}>0$ independent of $\varepsilon $, such that
\begin{equation}
\left\Vert L_{\varepsilon }r_{+}^{\varepsilon }\right\Vert _{0,\Omega_{+}}
\lesssim \left( \varepsilon (2M+2)K_{1}\right) ^{2M+2}
\label{thm_rem1}
\end{equation}
and 
\begin{equation}
\left\Vert L_{1}r_{-}^{\varepsilon }\right\Vert _{0,\Omega_{-}}\lesssim
\left( \varepsilon (2M+2) K_{2} \right) ^{2M+2}.
\label{thm_rem2}
\end{equation}
\end{theorem}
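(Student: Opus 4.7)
The strategy is to compute $L_\varepsilon r_+^\varepsilon$ and $L_1 r_-^\varepsilon$ term by term by linearity, using the equations satisfied by each component of the decomposition and then bounding the surviving contributions with the regularity estimates already established in the preceding sections.

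For the $r_+^\varepsilon$ bound, I would expand
\[
L_\varepsilon r_+^\varepsilon = L_\varepsilon u_+^\varepsilon - L_\varepsilon w_+^\varepsilon - L_\varepsilon(\chi_{BL} u_{BL}^M) - L_\varepsilon(\chi_{IL}\widehat v_{I,M}^+),
\]
and use (\ref{bvpa}) to identify the first term with $f_+$, (\ref{eq11}) to identify the second with $f_+ + \varepsilon^{2M+2}\Delta^{(M+1)} f_+$. For the two remaining terms I would apply the Leibniz-type identity $L_\varepsilon(\chi u)=\chi L_\varepsilon u-\varepsilon^2(2\nabla\chi\cdot\nabla u+u\,\Delta\chi)$, separating a principal part times the layer from commutator pieces containing derivatives of the cut-off. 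The principal pieces $\chi_{BL}L_\varepsilon u_{BL}^M$ and $\chi_{IL}L_\varepsilon \widehat v_{I,M}^+$ admit the pointwise estimates (\ref{LuBL_bound}) and (\ref{L_e_vIPlus}), both of the form $K^{2M+2}(\varepsilon(2M+2)+\rho)^{2M+2}e^{-\rho/\varepsilon}$ in the boundary-fitted coordinates $(\rho,\theta)$. I would pass to those coordinates, whose Jacobian $1-\rho\kappa_+(\theta)$ is bounded by (\ref{rho_0}), square the bound, and use the substitution $s=\rho/\varepsilon$ to turn the integral into a $\Gamma$-function integral that produces the desired $(\varepsilon(2M+2)K_1)^{2M+2}$ factor. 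The remaining term $\varepsilon^{2M+2}\Delta^{(M+1)}f_+$ is handled by combining analyticity (\ref{assf}) with Stirling, giving the same type of bound.

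The commutator pieces $\varepsilon^{2}(2\nabla\chi\cdot\nabla u+u\Delta\chi)$ are supported in the transition regions of the cut-offs, i.e.\ where $\rho\ge\rho_1$ for $\chi_{BL}$ and $\rho\ge\rho_2$ for $\chi_{IL}$. On those sets estimate (\ref{uBL_bound}) (and its analogue for $\widehat v_{I,M}^+$ following from Theorem \ref{thm_Vij}) contributes the exponential factor $e^{-\alpha\rho_i/\varepsilon}$, which is smaller than any power of $\varepsilon(2M+2)$ once $\varepsilon(2M+2)$ is bounded; these terms are therefore absorbed into the main bound, yielding (\ref{thm_rem1}).

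The bound (\ref{thm_rem2}) is considerably shorter: using (\ref{bvpb}) and (\ref{eq18}) the outer parts cancel and one is left with
\[
L_1 r_-^\varepsilon = -L_1(\chi_{IL} v_{I,M}^-).
\]
The Leibniz identity together with (\ref{L1vIMminus}), which states $L_1 v_{I,M}^-=0$, kills the principal term, so only the commutator
\[
L_1 r_-^\varepsilon = 2\,\nabla\chi_{IL}\cdot\nabla v_{I,M}^- + v_{I,M}^-\Delta\chi_{IL}
\]
remains. This commutator is supported in the $\chi_{IL}$-transition region and is controlled in $L^2(\Omega_-)$ via the analytic-regularity bound of Corollary \ref{thm_vIM} for $v_{I,M}^-$.

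The only genuinely technical step is the $L^2$-estimation of the principal layer terms in Part 1. The delicate point is pairing the algebraic factor $(\varepsilon(2M+2)+\rho)^{2M+2}$ with the exponential weight $e^{-2\rho/\varepsilon}$ and verifying, after the rescaling $s=\rho/\varepsilon$, that the resulting $\Gamma$-integral combines with the $(2M+2)$-th power of the prefactor to reproduce exactly $(\varepsilon(2M+2)K_i)^{2M+2}$; everything else is either arithmetic or an appeal to results already in hand.
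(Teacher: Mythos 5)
Your proposal follows essentially the same route as the paper's proof: the same term-by-term expansion of $L_\varepsilon r_+^\varepsilon$ and $L_1 r_-^\varepsilon$, the same use of (\ref{bvpa}), (\ref{eq11}), (\ref{eq18}) and (\ref{L1vIMminus}) to eliminate or identify principal parts, the same Leibniz-type commutator decomposition for the cut-off products, and the same appeal to (\ref{LuBL_bound}), (\ref{L_e_vIPlus}), (\ref{uBL_bound}) and (\ref{Vjminus_bound})/Corollary \ref{thm_vIM} for the surviving terms. You supply slightly more detail than the paper on the rescaling $s=\rho/\varepsilon$ and the resulting $\Gamma$-type integral, but the argument is the same.
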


\begin{proof}
We first consider (\ref{thm_rem1}) and we have
\begin{eqnarray}
L_{\varepsilon }r_{+}^{\varepsilon } &=&L_{\varepsilon }\left(
u_{+}^{\varepsilon }-w_{+}^{\varepsilon }-\chi _{BL}u_{BL}^{\varepsilon
}-\chi _{IL}\widehat{v}_{I,M}^{+}\right)   \notag \\
&=&L_{\varepsilon }\left( u_{+}^{\varepsilon }-w_{+}^{\varepsilon }\right)
-L_{\varepsilon }\left( \chi _{BL}u_{BL}^{\varepsilon }\right)
-L_{\varepsilon }\left( \chi _{IL}\widehat{v}_{I,M}^{+}\right) .
\label{Le_r+}
\end{eqnarray}
From (\ref{eq11}) we notice that
\begin{equation}
L_{\varepsilon }\left( u_{+}^{\varepsilon }-w_{+}^{\varepsilon }\right)
=\varepsilon ^{2M+2}\Delta ^{(M+1)}f_{+},  \label{Le_r_smooth}
\end{equation}
and also
\begin{equation*}
L_{\varepsilon }\left( \chi _{BL}u_{BL}^{\varepsilon }\right) =\varepsilon
^{2}\left( \Delta \chi _{BL}\right) u_{BL}^{\varepsilon }-2\varepsilon
^{2}\nabla \chi _{BL}\cdot \nabla u_{BL}^{\varepsilon }+\chi
_{BL}L_{\varepsilon }u_{BL}^{\varepsilon },
\end{equation*}
where the function $\chi _{BL}$ equals 1 for $0<\rho <\rho _{0}$ and 0 for 
$\rho >(\rho _{1}+\rho _{0})/2$. Hence by (\ref{uBL_bound}),
\begin{equation*}
\left\Vert \varepsilon ^{2}\left( \Delta \chi _{BL}\right)
u_{BL}^{\varepsilon }\right\Vert _{0,\Omega_{+}}\lesssim
\varepsilon ^{2}\left( 1+\left( \varepsilon (2M+1)K\right) ^{2M+1}\right)
e^{-\alpha \rho /\varepsilon }
\end{equation*}
and
\begin{equation*}
\left\Vert \varepsilon ^{2}\nabla \chi _{BL}\cdot \nabla u_{BL}^{\varepsilon
}\right\Vert _{0,\Omega_{+}}\lesssim \varepsilon \left(
1+\left( \varepsilon (2M+1)K\right) ^{2M+1}\right) e^{-\alpha \rho
/\varepsilon },
\end{equation*}
for some appropriate constant $K>0$. Therefore, by (\ref{LuBL_bound}) and
the previous two inequalities, we obtain
\begin{equation}
\left\Vert L_{\varepsilon }\left( \chi _{BL}u_{BL}^{\varepsilon }\right)
\right\Vert _{0,\Omega_{+}}\lesssim \left( \varepsilon
(2M+2)K\right) ^{2M+2}.  \label{L_chi_uBL}
\end{equation}
In a completely analogous way, we may obtain bounds for 
$L_{\varepsilon}\left( \chi _{IL}\widehat{v}_{I,M}^{+}\right) $, viz.
\begin{equation}
\left\Vert L_{\varepsilon }\left( \chi _{IL}\widehat{v}_{I,M}^{+}\right)
\right\Vert _{0,\Omega_{+}}\lesssim \left( \varepsilon
(2M+2)\widehat{K}\right) ^{2M+2},  \label{L_chi_vIMplus}
\end{equation}
for some appropriate constant $\widehat{K}>0$. 
Combining (\ref{Le_r+})--(\ref{L_chi_vIMplus}) we have
\begin{eqnarray*}
\left\Vert L_{\varepsilon }r_{+}^{\varepsilon }\right\Vert _{0,
\Omega_{+}} &\lesssim &\left\Vert \varepsilon ^{2M+2}\Delta
^{(M+1)}f_{+}\right\Vert _{0,\Omega_{+}}+\left(
\varepsilon (2M+2)K\right) ^{2M+2}+\left( \varepsilon (2M+2)\widehat{K}%
\right) ^{2M+2} \\
&\lesssim &\varepsilon ^{2M+2}\gamma _{f_{+}}^{2M+2}(2M+2)!+\left(
\varepsilon (2M+2)K\right) ^{2M+2}+\left( \varepsilon (2M+2)\widehat{K}%
\right) ^{2M+2} \\
&\lesssim &\left( \varepsilon (2M+2)K_{1}\right) ^{2M+2},
\end{eqnarray*}
for a suitable $K_{1}>0$ independent of $\varepsilon $. This establishes (\ref{thm_rem1}).

Turning our attention to (\ref{thm_rem2}), we have
\begin{eqnarray}
L_{1}r_{-}^{\varepsilon} &=&L_{1}\left( u_{-}^{\varepsilon
}-w_{-}^{\varepsilon }-\chi _{IL}v_{I,M}^{-}\right)   \notag \\
&=&L_{1}\left( u_{-}^{\varepsilon }-w_{-}^{\varepsilon }\right) -L_{1}\left(
\chi _{IL}v_{I,M}^{-}\right) .  \label{L1_r-}
\end{eqnarray}
We have from (\ref{L1_r-}) (with the aid of (\ref{eq18}))
\begin{equation*}
L_{1}r_{-}^{\varepsilon }=-L_{1}\left( \chi _{IL}v_{I,M}^{-}\right) ,
\end{equation*}
hence by (\ref{L1vIMminus}),
\begin{eqnarray*}
\left\Vert L_{1}r_{-}^{\varepsilon }\right\Vert _{0,\Omega _{-}}
&=&\left\Vert L_{1}\left( \chi _{IL}v_{I,M}^{-}\right) \right\Vert
_{0\Omega _{-}}\\
&=&\left\Vert \left( \Delta \chi _{IL}\right)
v_{I,M}^{-}-2\nabla \chi _{IL}\cdot \nabla v_{I,M}^{-}+\chi
_{IL}L_{1}v_{I,M}^{-}\right\Vert _{0,\Omega _{-}} \\
&\leq &\left\Vert \left( \Delta \chi _{IL}\right) v_{I,M}^{-}\right\Vert
_{0,\Omega _{-}}+\left\Vert 2\nabla \chi _{IL}\cdot \nabla
v_{I,M}^{-}\right\Vert _{0,\Omega _{-}}.
\end{eqnarray*}
Since the function $\chi _{IL}$ equals 1 for $0<\rho <\rho _{\Sigma }$ and 0
for $\rho >(\rho _{2}+\rho _{0})/2$ (cf. (\ref{chi_IL})), we further get
(using (\ref{Vjminus_bound}))
\begin{eqnarray*}
\left\Vert L_{1}r_{-}^{\varepsilon }\right\Vert _{0,\Omega _{-}} &\lesssim &%
\underset{j=0}{\overset{2M+1}{\dsum }}\varepsilon ^{j}\left( \left\Vert
V_{j}^{-}\right\Vert _{0,\Omega _{-}} + \left\Vert
\nabla V_{j}^{-}\right\Vert _{0,\Omega _{-}} \right) \lesssim \underset{j=0}{\overset{2M+1}%
{\dsum }}\varepsilon ^{j} j^{j}\gamma ^{j} \\
&\lesssim &\underset{j=0}{\overset{2M+1}{\dsum }}\left(
\varepsilon \gamma (2M+1)\right) ^{j} \lesssim \left( \varepsilon K_{2}(2M+2)\right) ^{2M+2}
\end{eqnarray*}
for a suitable $K_{2}>0$ independent of $\varepsilon $. Thus (\ref{thm_rem2}) 
is established and this completes the proof.
\end{proof}

\begin{remark}
Theorem  \ref{thm_rem}  shows that for $\varepsilon M$ sufficiently small,  the remainder in 
(\ref{decomp}) is exponentially small, hence it need not be approximated. This information will be 
utilized in the next section when we will construct the approximation to $u^{\varepsilon}$.
\end{remark}

\section{Approximation results\label{approx}}


We begin this section with the variational formulation of (\ref{bvpa})--(\ref%
{bvpf}), which reads: Find $u^{\varepsilon }=\left( u_{+}^{\varepsilon
},u_{-}^{\varepsilon }\right) \in H_{0}^{1}\left( \Omega \right) $ such that 
\begin{equation}
B_{\varepsilon }\left( u^{\varepsilon },v\right) =F(v)\;\forall \;v=\left(
v_{+}^{\varepsilon },v_{-}^{\varepsilon }\right) \in H_{0}^{1}\left( \Omega
\right) ,  \label{VF}
\end{equation}%
where 
\begin{equation}
B_{\varepsilon }\left( u^{\varepsilon },v\right) =\dint\nolimits_{\Omega
_{+}}\left\{ \varepsilon ^{2}\nabla \left( u_{+}^{\varepsilon }\right) \cdot
\nabla \left( v_{+}^{\varepsilon }\right) +u_{+}^{\varepsilon
}v_{+}^{\varepsilon }\right\} +\dint\nolimits_{\Omega _{-}}\left\{ \nabla
\left( u_{-}^{\varepsilon }\right) \cdot \nabla \left( v_{-}^{\varepsilon
}\right) +u_{-}^{\varepsilon }v_{-}^{\varepsilon }\right\} ,  \label{Buv}
\end{equation}%
\begin{equation}
F(v)=\dint\nolimits_{\Omega _{+}}f_{+}v_{+}^{\varepsilon
}+\dint\nolimits_{\Omega _{-}}f_{-}v_{-}^{\varepsilon
}+\dint\nolimits_{\Sigma }hv.  \label{Fv}
\end{equation}%
It is straight forward to show that the bilinear form (\ref{Buv}) is
coercive and continuous on $H_{0}^{1}\left( \Omega \right) $, hence the
variational problem (\ref{VF}) admits a unique solution thanks to the
Lax-Milgram lemma. The discrete version of (\ref{VF}) reads: Find $%
u_{N}^{\varepsilon }=\left( u_{+}^{N},u_{-}^{N}\right) \in V_{N}\subset
H_{0}^{1}\left( \Omega \right) $ such that 
\begin{equation}
B_{\varepsilon }\left( u_{N}^{\varepsilon },v\right) =F(v)\;\forall
\;v=\left( v_{+}^{\varepsilon },v_{-}^{\varepsilon }\right) \in V_{N}\subset
H_{0}^{1}\left( \Omega \right) ,  \label{Buv_N}
\end{equation}%
and by Céa's Lemma we have 
\begin{equation}
\left\Vert u^{\varepsilon }-u_{N}^{\varepsilon }\right\Vert _{\varepsilon
}\leq \underset{v\in V_{N}}{\inf }\left\Vert u^{\varepsilon }-v\right\Vert
_{\varepsilon },  \label{best_approximation}
\end{equation}%
where the energy norm $\left\Vert \cdot \right\Vert _{\varepsilon }$ is
defined as 
\begin{equation}
\left\Vert u\right\Vert _{\varepsilon }^{2}=B_{\varepsilon }\left(
u,u\right) .  \label{energy_norm}
\end{equation}%
We now describe the subspace $V_{N}$. For simplicity, we will focus on
quadrilateral elements, even though triangular elements are also possible
(see \cite{ms} for this and other choices of a suitable mesh). Since the
behavior of the solution $u^{\varepsilon }$ depends on the value of $%
\varepsilon $ (cf. Theorem \ref{thm_asy}), we distinguish between the cases $%
\kappa p\varepsilon \geq 1/2$ and $\kappa p\varepsilon <1/2$ (with $\kappa
\in \mathbb{R}$ \ a fixed constant) as follows: If $\kappa p\varepsilon \geq
1/2$ then the mesh does not need any special design, as in this case the
polynomial degree $p$ of the approximating functions is high enough to
ensure good approximability. Hence, in this case the mesh $\Delta $ only
needs to be regular in the sence of \cite{ciarlet:78} (or satisfy conditions
M1--M3 in \cite{ms}). \ In the case $\kappa p\varepsilon <1/2$ the mesh will
include elements of size $O(p\varepsilon )$ along $\partial \Omega _{+}$ in
order for the boundary and interface layer effects to be captured -- these
are referred to as \emph{needle elements} in \cite{ms}. \ We now describe
one such possible construction: Let $\Omega _{+}^{0}$ be given by (\ref%
{Omega_0}), and divide $\partial \Omega _{+}\backslash \Sigma $ into
subintervals $\left( \theta _{j},\theta _{j+1}\right) ,j=1,...,m-1,\theta
\in \partial \Omega _{+}$. Then draw the inward normal at $\theta _{j}$ of
length $\rho _{0}$ (see eq. (\ref{rho_0})) and connect each point $\left(
\rho _{j},\theta _{j}\right) =\left( \rho _{0},\theta _{j}\right) $ using
the curve $\rho =\rho _{0}$ (=constant). \ Further, divide each 
\begin{equation}
\left( \Omega _{+}^{0}\right) _{j}:=\left\{ \left( \rho ,\theta \right)
:0\leq \rho \leq \rho _{0},\theta _{j}\leq \theta \leq \theta _{j+1}\right\}
,j=1,...,m  \label{Omega_0_j}
\end{equation}%
into $\left( \Omega _{+}^{0,1}\right) _{j},\left( \Omega _{+}^{0,2}\right)
_{j}$ , where 
\begin{eqnarray*}
\left( \Omega _{+}^{0,1}\right) _{j} &=&\left\{ \left( \rho ,\theta \right)
:\theta _{j}\leq \theta \leq \theta _{j+1},0\leq \rho \leq \frac{1}{2}\rho
_{0}\kappa p\varepsilon \right\} , \\
\left( \Omega _{+}^{0,2}\right) _{j} &=&\left( \Omega _{+}^{0}\right)
_{j}\backslash \left( \Omega _{+}^{0,1}\right) _{j}.
\end{eqnarray*}%
In the above definitions, $\kappa \in \mathbb{R}$ is a fixed constant, $p$
is the degree of the approximating polynomials and we recall that we assume $%
\kappa p\varepsilon <1/2.$ This will define a mesh 
\begin{equation*}
\Delta _{+}^{0}:=\left\{ \left( \Omega _{+}^{0,1}\right) _{j},\left( \Omega
_{+}^{0,2}\right) _{j}\right\} _{j=1}^{m}
\end{equation*}%
over $\Omega _{+}^{0}.$ We may define a completely analogous mesh $\Delta
_{\Sigma }^{0}$ over $\Omega _{\Sigma }^{0}$ (see eq. (\ref{Omega_sigma})),
as 
\begin{equation*}
\Delta _{\Sigma }^{0}:=\left\{ \left( \Omega _{\Sigma }^{0,1}\right)
_{k},\left( \Omega _{\Sigma }^{0,2}\right) _{k}\right\} _{k=1}^{n},
\end{equation*}%
with $\left( \Omega _{\Sigma }^{0,1}\right) _{k},\left( \Omega _{\Sigma
}^{0,2}\right) _{k}$ defined in an analogous way as $\left( \Omega
_{+}^{0,1}\right) _{j},\left( \Omega _{+}^{0,2}\right) _{j}$. Next, let $%
\left\{ \left( \Omega _{+}^{1}\right) _{i}\right\} _{i=1}^{\ell }$ be some
subdivision of $\Omega _{+}^{1}$ that is compatible with $\Delta _{+}^{0}$
and $\Delta _{\Sigma }^{0},$ and define the mesh 
\begin{equation}
\Delta _{+}=\left\{ \left( \Omega _{\Sigma }^{0,1}\right) _{k},\left( \Omega
_{\Sigma }^{0,2}\right) _{k},\left( \Omega _{+}^{0,1}\right) _{j},\left(
\Omega _{+}^{0,2}\right) _{j},\left( \Omega _{+}^{1}\right)
_{i},k=1,...,n,j=1,...,m,i=1,...,\ell \right\} ,  \label{Delta_+}
\end{equation}%
over $\Omega _{+}.$ \ The mesh $\Delta _{-}$ over $\Omega _{-}$ is simply be
chosen to be compatible with $\Delta _{+}$ , and regular, in the sense of 
\cite{ciarlet:78}. The mesh over the entire domain $\Omega $ is then taken
to be 
\begin{equation}
\Delta =\Delta _{+}\cup \Delta _{-},  \label{Delta}
\end{equation}%
and we assume that the number of elements in $\Delta $ is bounded
independently of $\varepsilon $. The above mesh satisfies the definition of
a \emph{regular admissible boundary layer mesh} (Definition 3.2 in \cite{ms}%
), which implies the following: With $S:=[0,1]\times \lbrack 0,1]$ the usual
reference square, we associate with each quadrilateral $\Omega _{i}^{\pm
}\in \Delta $ a differentiable, bijective element mapping 
\begin{equation*}
M_{i}^{\pm }:S\rightarrow \overline{\Omega }_{i}^{\pm },
\end{equation*}%
which, in this case, satisfies $\forall \;i$ 
\begin{equation*}
\left\Vert D^{\alpha }M_{i}^{\pm }\right\Vert _{L^{\infty }(S)}\lesssim
\gamma ^{\left\vert \alpha \right\vert }\left\vert \alpha \right\vert
!\;\forall \;\alpha \in \mathbb{N}_{0}^{2}.
\end{equation*}%
The space $V_{N}$ is then defined as 
\begin{equation}
V_{N}=\left\{ u\in H^{1}\left( \Omega \right) :\left. u\right\vert _{\Omega
_{i}^{\pm }}=\phi _{p}\circ \left( M_{i}^{\pm }\right) ^{-1}\text{ for }\phi
_{p}\in Q_{p}(S)\right\} \cap H_{0}^{1}\left( \Omega \right) ,  \label{VN}
\end{equation}%
where $Q_{p}(S)$ denotes the space of all polynomials of degree $p$ in each
variable defined on the reference square $S$. Note that 
\begin{equation*}
N=\dim V_{N}=O\left( p^{2}\right) .
\end{equation*}

Now, for $p\geq 1$ we define on the space of continuous function $C\left(
[0,1]\right) ,$ the operator $\pi _{p}$ by interpolation in the $p+1$
Gauss-Lobatto points, and on $S$ we introduce the interpolation operator $%
\Pi _{p}$ as the tensor product of the two one-dimensional operators $\pi
_{p}^{x}$ and $\pi _{p}^{y}$. Then, by Lemma 3.8 in \cite{ms}, we have that
for any $u\in C^{\infty }\left( S\right) $ with $\left\Vert D^{\alpha
}u\right\Vert _{0,S}\lesssim \gamma ^{\left\vert \alpha \right\vert
}\left\vert \alpha \right\vert !\;\forall \;\alpha \in \mathbb{N}_{0}^{2},$
there exists a constant $\sigma >0$ depending only on $\gamma $, such that 
\begin{equation}
\left\Vert u-\Pi _{p}u\right\Vert _{L^{\infty }(S)}+\left\Vert \nabla \left(
u-\Pi _{p}u\right) \right\Vert _{L^{\infty }(S)}\lesssim e^{-\sigma p}.
\label{interpolation}
\end{equation}
Moreover, there holds (see, e.g., Lemma 3.7 in \cite{ms}), 
\begin{equation}
\left\{ 
\begin{array}{c}
\left\Vert \Pi _{p}u\right\Vert _{L^{\infty }(S)}\lesssim (1+\ln
p)^{2}\left\Vert u\right\Vert _{L^{\infty }(S)} \\ 
\left\Vert \partial _{x}\Pi _{p}u\right\Vert _{L^{\infty }(S)},\left\Vert
\partial _{y}\Pi _{p}u\right\Vert _{L^{\infty }(S)}\lesssim p^{2}(1+\ln
p)^{2}\left\Vert u\right\Vert _{L^{\infty }(S)}%
\end{array}%
\right. \;.  \label{interpolation2}
\end{equation}

We now prove our main approximation result.

\begin{theorem}
\label{thm_main}
Let $u^{\varepsilon }\in H_{0}^{1}\left( \Omega \right) ,u_{N}^{\varepsilon
}\in V_{N}$ be the solutions of (\ref{Buv}) and (\ref{Buv_N}), respectively, with
$V_N$ defined by (\ref{VN}) on the mesh $\Delta$ given by (\ref{Delta}).  Further,
assume that $\partial \Omega$ is analytic and the functions $f_{\pm }$ are 
analytic on $\Omega _{\pm }$ while the function $h$ is analytic on $\Sigma $. Then,
for $\kappa$ sufficiently small, we have
\begin{equation*}
\left\Vert u^{\varepsilon }-u_{N}^{\varepsilon }\right\Vert _{\varepsilon
}\lesssim N^2 e^{-b\sqrt{N}},
\end{equation*}
for some constant $b>0$ independent of $\varepsilon $ and $p$.
\end{theorem}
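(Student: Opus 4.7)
The strategy is to invoke C\'ea's lemma (estimate (\ref{best_approximation})) and exhibit an explicit element of $V_N$ that approximates $u^\varepsilon$ at an exponential rate. Since $N=O(p^2)$, an estimate of the form $\|u^\varepsilon-v\|_\varepsilon\lesssim p^2 e^{-bp}$ will yield the desired bound $N^2 e^{-b\sqrt{N}}$. Following the two-regime analysis of \cite{ms}, I would distinguish the asymptotic case $\kappa p\varepsilon\geq 1/2$ from the pre-asymptotic case $\kappa p\varepsilon<1/2$. In the asymptotic case, Theorem \ref{thm_asy} provides $\varepsilon\|D^\alpha u_+^\varepsilon\|_{0,\Omega_+}+\|D^\alpha u_-^\varepsilon\|_{0,\Omega_-}\lesssim K^{|\alpha|}|\alpha|!$ (since $\varepsilon^{-1}\lesssim p$), and a direct piecewise Gauss-Lobatto interpolation via (\ref{interpolation}), applied element by element after pulling back by the analytic element maps $M_i^\pm$, gives $\|u^\varepsilon-\Pi_p u^\varepsilon\|_\varepsilon\lesssim e^{-\sigma p}$ on the shape-regular mesh $\Delta$.

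In the pre-asymptotic case, I would pick $M=\lfloor c/\varepsilon\rfloor$ with $c$ sufficiently small so that the smallness conditions of the results of Section \ref{expansion} (in particular of Theorem \ref{thm_rem} and Corollary \ref{thm_vIM}) are satisfied, and then build the approximant component-wise using the decomposition (\ref{decomp}). For the smooth outer part $w_M^\pm$, the analytic bounds (\ref{eq14a})--(\ref{eq14}) are uniform in $\varepsilon$ and plain element-wise Gauss-Lobatto interpolation yields $e^{-\sigma p}$. The same is true for $\chi_{IL}v_{I,M}^-$ by Corollary \ref{thm_vIM}. For $\chi_{BL}u_{BL}^M$ and $\chi_{IL}\widehat{v}_{I,M}^+$, the needle elements $(\Omega_+^{0,1})_j$ and $(\Omega_\Sigma^{0,1})_k$ have width $\tfrac12 \rho_0\kappa p\varepsilon$ in the normal direction, so that after pulling back to the reference square the $\varepsilon^{-p}$ derivative blow-up in (\ref{uBL_bound}) and (\ref{Vjplus_bound_derivatives}) is exactly compensated by $(p\varepsilon)^p$ from the chain rule; (\ref{interpolation}) then applies with constants independent of $\varepsilon$. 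On the outer elements $(\Omega_+^{0,2})_j$ and $(\Omega_\Sigma^{0,2})_k$, the layers are pointwise bounded by $e^{-\alpha\rho_0\kappa p/2}$, and inserting this into (\ref{interpolation2}) yields an error of size $p^2(1+\ln p)^2 e^{-\alpha\rho_0\kappa p/2}$, still exponentially small.

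The remainder $r^\varepsilon$ need not be interpolated: by the verification at the start of Section \ref{rem} it satisfies the homogeneous transmission conditions, so integration by parts and coercivity give
\begin{equation*}
\|r^\varepsilon\|_\varepsilon^2 = \int_{\Omega_+}(L_\varepsilon r_+^\varepsilon)\, r_+^\varepsilon + \int_{\Omega_-}(L_1 r_-^\varepsilon)\, r_-^\varepsilon \lesssim \bigl(\|L_\varepsilon r_+^\varepsilon\|_{0,\Omega_+}+\|L_1 r_-^\varepsilon\|_{0,\Omega_-}\bigr)\|r^\varepsilon\|_\varepsilon,
\end{equation*}
so Theorem \ref{thm_rem} yields $\|r^\varepsilon\|_\varepsilon\lesssim(\varepsilon(2M+2)K)^{2M+2}$, which, with $M\simeq c/\varepsilon$ and small enough $c$, is $\lesssim e^{-\alpha/\varepsilon}\lesssim e^{-\alpha'p}$ since $\varepsilon^{-1}\gtrsim 2\kappa p$ in this regime. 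Summing all contributions and absorbing polynomial factors into $N^2=O(p^4)$ gives the claimed estimate.

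The main technical obstacle is the book-keeping on the needle elements. One must verify that the pulled-back boundary-layer function on the reference square satisfies analytic-type derivative bounds with constants independent of $\varepsilon$, carefully tracking how the normal-variable scaling by $\tfrac12\rho_0\kappa p\varepsilon$ interacts with the $\varepsilon^{-p}e^{(1-\alpha)p}$ factor in (\ref{uBL_bound}) and (\ref{Vjplus_bound_derivatives}); the constant $\kappa$ must be chosen small enough (relative to the constant $K_1$ in those bounds) so that the product is $<1$, which is exactly where the hypothesis ``$\kappa$ sufficiently small'' enters. This is essentially the same argument as Lemma 3.9 of \cite{ms}, with the added ingredient of treating the interface layer $\widehat v_{I,M}^+$ via Corollary \ref{thm_vIM}, whose derivative bound has the same structure as the classical boundary-layer bound of \cite{ms1}.
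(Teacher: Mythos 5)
Your proposal follows the paper's own proof almost step for step: the same two-regime split, the same decomposition and component-wise interpolation in the pre-asymptotic case, the same needle-element scaling argument (Proposition 3.11 of \cite{ms}) with the $\kappa$-smallness condition, and the same variational/Cauchy--Schwarz treatment of the remainder via Theorem \ref{thm_rem}. One small slip: in the asymptotic regime, Theorem \ref{thm_asy} does not give $\varepsilon\Vert D^{\alpha}u_{+}^{\varepsilon}\Vert_{0,\Omega_{+}}+\Vert D^{\alpha}u_{-}^{\varepsilon}\Vert_{0,\Omega_{-}}\lesssim K^{|\alpha|}|\alpha|!$ directly from $\varepsilon^{-1}\lesssim p$; the paper instead invokes Lemma 3.10 of \cite{ms} to obtain the pulled-back bound $\gamma^{|\alpha|}|\alpha|!\,e^{1/\varepsilon}$ and then kills the $e^{1/\varepsilon}$ factor using $1/\varepsilon<2\kappa p$ together with $\kappa<\sigma/2$ --- which is the same place the hypothesis ``$\kappa$ sufficiently small'' is used.
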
%
\begin{proof} We consider the cases $\kappa p\varepsilon >1/2$ (asymptotic
case) and $\kappa p\varepsilon \leq 1/2$ (pre-asymptotic case) separately.

\emph{Case 1}: $\kappa p\varepsilon >1/2$ (asymptotic case)

By Theorem \ref{thm_asy} there exist constants $C,K>0$
depending only on the data such that
\begin{equation*}
\varepsilon\left\Vert D^{\alpha }u_{+}^{\varepsilon }\right\Vert _{0,\Omega _{+}}+
\left\Vert D^{\alpha }u_{-}^{\varepsilon }\right\Vert _{0,\Omega _{-}}\leq
C \varepsilon K^{\left\vert \alpha \right\vert }\max \left\{ \left\vert \alpha
\right\vert ,\varepsilon ^{-1}\right\} ^{\left\vert \alpha \right\vert
}\;\forall \;\alpha = 1,2,... .  
\end{equation*}
Now, by Lemma 3.10 of \cite{ms} we have that for each element map $M_{i}^{\pm }$,
\begin{equation*}
\left\Vert D^{\alpha }\left( u_{\pm}^{\varepsilon }\circ M_{i}^{\pm}\right)
\right\Vert _{0,S}\lesssim \gamma _{\pm}^{\left\vert \alpha \right\vert
}\left\vert \alpha \right\vert !e^{1/\varepsilon },
\end{equation*}
for some constants $\gamma _{\pm }>0$ independent of $\varepsilon $ and $i$.
Hence, by (\ref{interpolation}) and Lemma 3.6 of \cite{ms} we have 
\begin{equation*}
\left\Vert \left( u_{\pm }^{\varepsilon }\circ M_{i}^{\pm }\right) -\Pi _{p}\left(
u_{\pm }^{\varepsilon }\circ M_{i}^{\pm }\right) \right\Vert _{L^{\infty
}(S)}+\left\Vert \nabla \left( \left( u_{\pm}^{\varepsilon }\circ M_{i}^{\pm
}\right) -\Pi _{p}\left( u_{\pm }^{\varepsilon }\circ M_{i}^{\pm }\right) \right)
\right\Vert _{L^{\infty }(S)}\lesssim e^{-\sigma p+1/\varepsilon }.
\end{equation*}
Since $2\kappa p>1/\varepsilon ,$ we have $-\sigma p+1/\varepsilon \leq
-\sigma p+2\kappa p$ and, under the assumption that $\kappa <\sigma /2,$
\begin{equation*}
\left\Vert \left( u_{\pm }^{\varepsilon }\circ M_{i}^{\pm }\right) -\Pi _{p}\left(
u_{\pm }^{\varepsilon }\circ M_{i}^{\pm }\right) \right\Vert _{L^{\infty
}(S)}+\left\Vert \nabla \left( \left( u_{\pm }^{\varepsilon }\circ M_{i}^{\pm
}\right) -\Pi _{p}\left( u_{\pm }^{\varepsilon }\circ M_{i}^{\pm }\right) \right)
\right\Vert _{L^{\infty }(S)}\lesssim e^{-bp},
\end{equation*}
for some constant \ $b>0$. \ By (\ref{best_approximation}) and 
(\ref{energy_norm}) we get the desired result.

\emph{Case 2}: $\kappa p\varepsilon \leq 1/2$ (pre-asymptotic case)

In this case we utilize the expansion and regularity results of Section 
\ref{expansion} which state that the solution $u^{\varepsilon }=\left(
u_{+}^{\varepsilon },u_{-}^{\varepsilon }\right) $ can be written as
\begin{eqnarray*}
u_{+}^{\varepsilon } &=&w_{M}^{+}+\chi _{BL}u_{BL}^{M}+\chi _{IL}\widehat{v}%
_{I,M}^{+}+r_{+}^{\varepsilon }, \\
u_{-}^{\varepsilon } &=&w_{M}^{-}+\chi _{IL}v_{I,M}^{-}+r_{-}^{\varepsilon },
\end{eqnarray*}
with each term defined and analyzed in subsections \ref{smooth}--\ref{rem}.
We begin be selecting $M$ in such a way that $\varepsilon M$ is sufficiently
small for all the regularity results of subsections \ref{smooth}--\ref{rem}
to hold true. (The lack of concreteness on our part is due to the careful
constant selection made in \cite{ms}, i.e. such a choice for $M$ is possible
by \cite{ms}). The proof relies on the following observations:

\begin{enumerate}
\item The terms $w_{M}^{\pm },v_{I,M}^{-}$ are analytic in their respective
domains, hence (\ref{interpolation}) may be applied.

\item The construction of the mesh allows us to approximate $u_{BL}^{M}$ and 
$\widehat{v}_{I,M}^{+}$ at an exponential rate.

\item The choice of $M$ renders the term $\Vert r^{\varepsilon } \Vert_{\varepsilon, \Omega}$ 
negligible (exponentially small), hence the remainder need not be approximated.
\end{enumerate}

Let us first consider item 1 above. \ For the term $v_{I,M}^{-}$ we have, by
Corollary \ref{thm_vIM}, that for each element map $M_{i}^{-}$
\begin{equation*}
\left\Vert D^{\alpha }\left( v_{I,M}^{-}\circ M_{i}^{-}\right) \right\Vert
_{0,S}\lesssim C^{\left\vert \alpha \right\vert }\left\vert \alpha
\right\vert !,
\end{equation*}
hence by (\ref{interpolation}), 
\begin{equation*}
\left\Vert \left( v_{I,M}^{-}\circ M_{i}^{-}\right) -\Pi _{p}\left(
v_{I,M}^{-}\circ M_{i}^{-}\right) \right\Vert _{L^{\infty }(S)}+\left\Vert
\nabla \left( \left( v_{I,M}^{-}\circ M_{i}^{-}\right) -\Pi _{p}\left(
v_{I,M}^{-}\circ M_{i}^{-}\right) \right) \right\Vert _{L^{\infty
}(S)}\lesssim e^{-bp}.
\end{equation*}
The same works for the other two terms (the details are ommitted).

Next let us comment on item 2; since the steps are very similar for both 
$u_{BL}^{M}$ and $\widehat{v}_{I,M}^{+}$, we will only consider the latter.
Without loss of generality we assume that $\chi _{IL}$ is 1 in 
$\Omega_{\Sigma }^{0}$ and 0 otherwise. Hence we only need to approximate 
$\widehat{v}_{I,M}^{+}$ within $\Omega _{\Sigma }^{0}$. To this end, we note that the
mesh in $\Omega _{\Sigma }^{0}$ consists of two types of elements (cf. (\ref{Delta_+})): 
\begin{equation*}
\left( \Omega _{\Sigma }^{0,1}\right) _{j}=\left\{ \left( \rho ,\theta
\right) :\theta _{j}\leq \theta \leq \theta _{j+1},0\leq \rho \leq \frac{1}{2%
}\rho _{0}\kappa p\varepsilon \right\} \;\text{\ and }\;\left( \Omega
_{\Sigma }^{0,2}\right) _{j}=\left( \Omega _{+}^{0}\right) _{j}\backslash
\left( \Omega _{+}^{0,1}\right) _{j}.
\end{equation*}
For $\left( \Omega _{\Sigma }^{0,1}\right) _{j}$ , with associated mapping 
$M_{j}^{0,1}$, we have from Proposition 3.11 in \cite{ms}
\begin{equation*}
\left\Vert D^{\alpha }\left( \widehat{v}_{I,M}^{+}\circ \psi ^{-1}\circ
M_{j}^{0,1}\right) \right\Vert _{L^{\infty }(S)}\lesssim e^{\kappa
p}K^{\left\vert \alpha \right\vert }\left\vert \alpha \right\vert !\;\forall
\alpha \in \mathbb{N}_{0}^{2},
\end{equation*}
where $\psi $ was defined by (\ref{psi}). \ Therefore, by (\ref{interpolation})
\begin{equation*}
\left\Vert \left( \widehat{v}_{I,M}^{+}\circ \psi ^{-1}\circ
M_{j}^{0,1}\right) -\Pi _{p}\left( \widehat{v}_{I,M}^{+}\circ \psi
^{-1}\circ M_{j}^{0,1}\right) \right\Vert _{L^{\infty }(S)}+
\end{equation*}
\begin{equation*}
+\left\Vert \nabla \left( \left( \widehat{v}_{I,M}^{+}\circ \psi ^{-1}\circ
M_{j}^{0,1}\right) -\Pi _{p}\left( \widehat{v}_{I,M}^{+}\circ \psi
^{-1}\circ M_{j}^{0,1}\right) \right) \right\Vert _{L^{\infty }(S)}\lesssim
e^{\kappa p}e^{-bp},
\end{equation*}
from which the desired result follows \emph{provided} $\kappa <b.$ Now, let
us consider the approximation of $\widehat{v}_{I,M}^{+}$ over the elements 
$\left( \Omega _{\Sigma }^{0,2}\right) _{j}$, with associated mapping 
$M_{j}^{0,1}$. From (\ref{uBL_bound}) we have 
\begin{equation*}
\left\Vert \chi _{IL}\widehat{v}_{I,M}^{+}\circ \psi ^{-1}\circ
M_{j}^{0,1}\right\Vert _{L^{\infty }(S)}\lesssim C_{\alpha }e^{-\alpha
\kappa p}\;,\;\left\Vert \nabla \left( \chi _{IL}\widehat{v}_{I,M}^{+}\circ
\psi ^{-1}\circ M_{j}^{0,1}\right) \right\Vert _{L^{\infty }(S)}\lesssim
C_{\alpha }\varepsilon ^{-1}e^{-\alpha \kappa p}.
\end{equation*}
Therefore, from (\ref{interpolation2}) we get
\begin{equation*}
\left\Vert \chi _{IL}\widehat{v}_{I,M}^{+}\circ \psi ^{-1}\circ
M_{j}^{0,1}\right\Vert _{L^{\infty }(S)}\lesssim \left( 1+\ln p\right)
^{2}e^{-bp}
\end{equation*}
and
\begin{equation*}
\left\Vert \nabla \left( \chi _{IL}\widehat{v}_{I,M}^{+}\circ \psi
^{-1}\circ M_{j}^{0,1}\right) \right\Vert _{L^{\infty }(S)}\lesssim
\varepsilon ^{-1}p^{2}\left( 1+\ln p\right) ^{2}e^{-bp},
\end{equation*}
from which the desired result follows once we use (\ref{energy_norm}).

Turning to item 3, we have from the variational formulation (\ref{VF})--(\ref{Fv}), that the remainder
$r^{\varepsilon} = \left( r^{\varepsilon}_{+}, r^{\varepsilon}_{-} \right)$ satisfies
\[
B_{\varepsilon }(r^{\varepsilon }, v)=
\int_{ \Omega _{+}}  L_{\varepsilon }r_{+}^{\varepsilon } v\,dx+
\int_{ \Omega _{-}}  L_{1 }r_{-}^{\varepsilon } v\,dx, \quad \forall v\in H^1_0(\Omega).
\]
Hence 
by Cauchy-Schwarz's inequality we get
\[
B_{\varepsilon }(r^{\varepsilon }, r^{\varepsilon })\leq
\|L_{\varepsilon }r_{+}^{\varepsilon }\|^2_{0, \Omega _{+}}+
\|L_{1 }r_{-}^{\varepsilon }\|^2_{0, \Omega _{-}},
\]
and by Theorem \ref{thm_rem} we obtain
\[
\Vert r^{\varepsilon} \Vert_{\varepsilon} = %
\left[ B_{\varepsilon }(r^{\varepsilon }, r^{\varepsilon })\right]^{1/2}
\lesssim \left( \varepsilon (2M+2)K\right) ^{2M+2} \lesssim e^{-bp},
\]
%
%
for some suitable constant $b>0$, depending only on the data. This completes the proof. \end{proof}

\section{Numerical results\label{computations}}

In this section we will illustrate our theoretical findings for the model
problem (\ref{bvpa})--(\ref{bvpf}), in the case when $f_{+}=f_{-}=1,h=0$ and
the domain $\Omega $ consists of the two subdomains $\Omega _{+}$ and $%
\Omega _{-}$, delimited by the three concentric circles with radii 1, 2 and
3. In other words, $\Omega _{+}$ is the domain inside the two concentric
circles of radii 1 and 2, while $\Omega _{-}$ is the domain inside the two
concentric circles of radii 2 and 3, as shown in figure 2.

\begin{figure}[h]
\label{compdomain} \setlength{\unitlength}{1cm}
\par
\begin{center}
\hspace{2cm}\psfig{figure=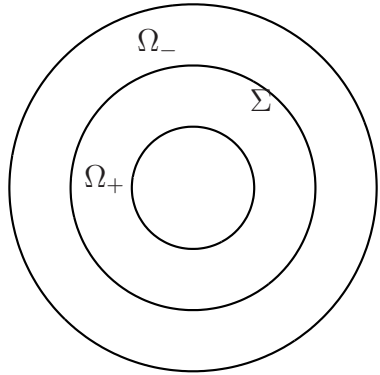,width=5cm} 
\begin{picture}(2,3)
\put (-4.1,2.5) {\shortstack[c] {$\Omega_{+}$}}
\put (-3.4,4.3) {\shortstack[c] {$\Omega_{-}$}}
\put (-1.9,3.5) {\shortstack[c] {$\Sigma$}}
\end{picture}
\end{center}
\caption{Domains $\Omega_+$ and $\Omega_-$ used for the computations.}
\end{figure}

We expect to have a boundary layer along $\partial \Omega _{+}\backslash
\Sigma $ (the circle of radius 1) and an interface layer along $\Sigma $
(the circle of radius 2). The mesh, shown in figure 
3, accounts for the presence of the layers by including thin elements of
size $p\varepsilon $ along $\partial \Omega _{+}\backslash \Sigma $ and $%
\Sigma $ -- the value of the constant $\kappa $ appearing in the definition
of the mesh in the previous section was taken to be 1 (a value known to
produce almost the same results as those obtained with the \textquotedblleft
optimal \textquotedblright\ value of $\kappa $, see, e.g., \cite{ss}). An
exact solution is available for this problem, hence our computations are
reliable.

\begin{figure}[h]
\label{mesh}
\par
\begin{center}
\psfig{figure=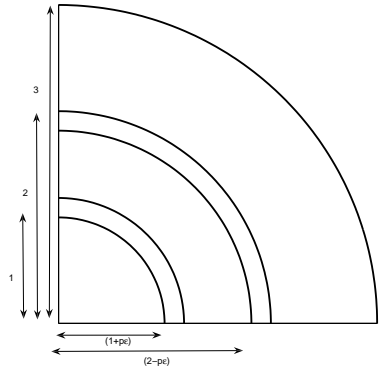,width=5cm}
\end{center}
\caption{Design of the mesh (on a quarter of the domain).}
\end{figure}

\begin{figure}[h]
\label{soln}
\par
\begin{center}
\psfig{figure=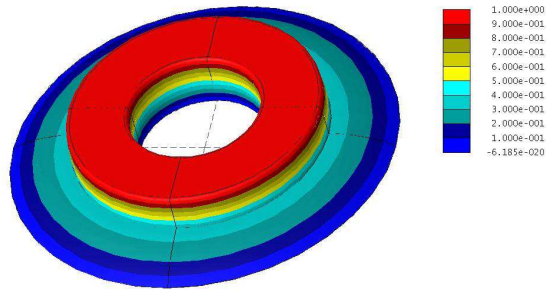,width=3in} 
\end{center}
\caption{Approximate solution for $p=8, \protect\varepsilon =0.01$.}
\end{figure}

\begin{figure}[h]
\label{conv}
\par
\begin{center}
\psfig{figure=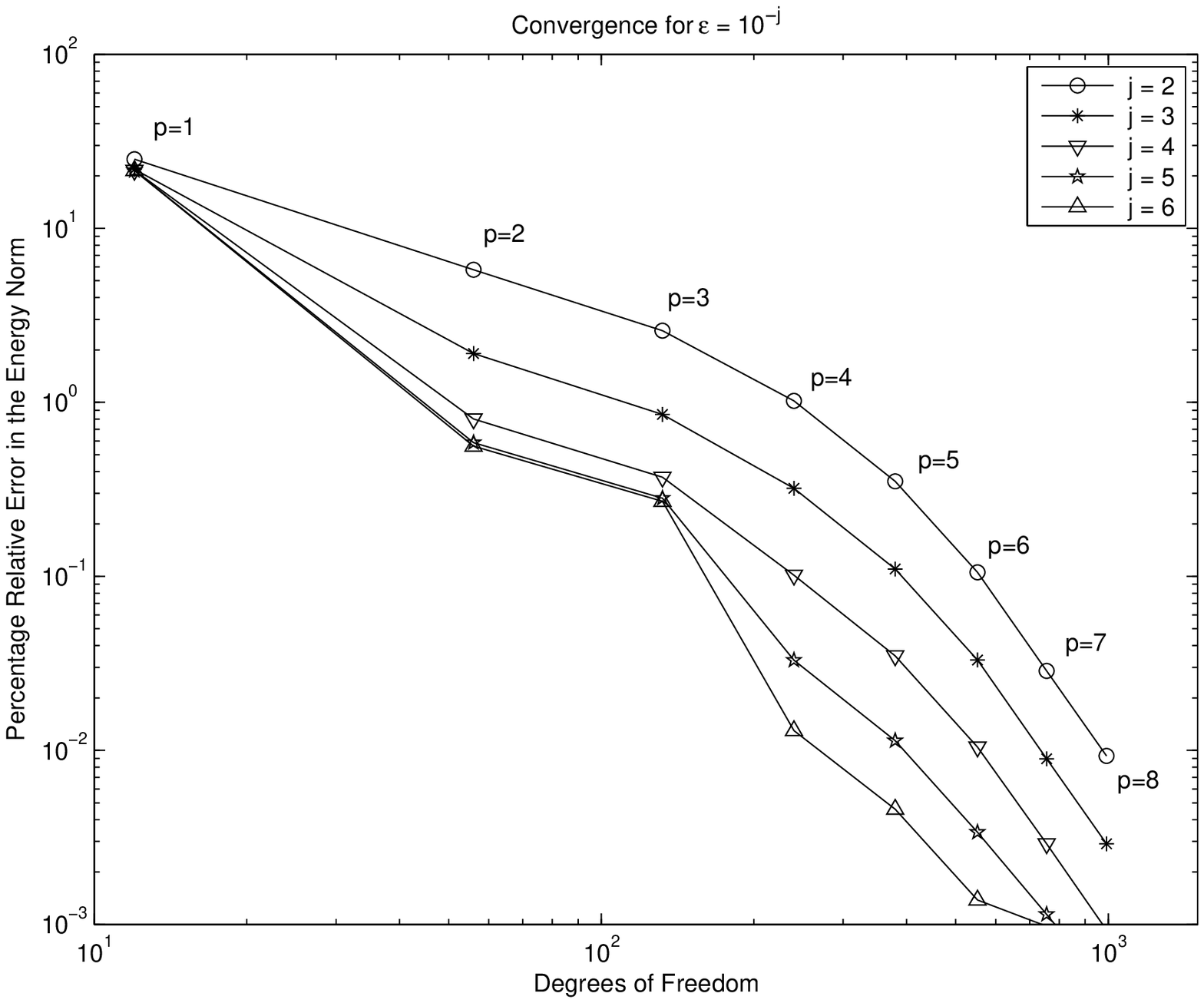,width=3in} %
\psfig{figure=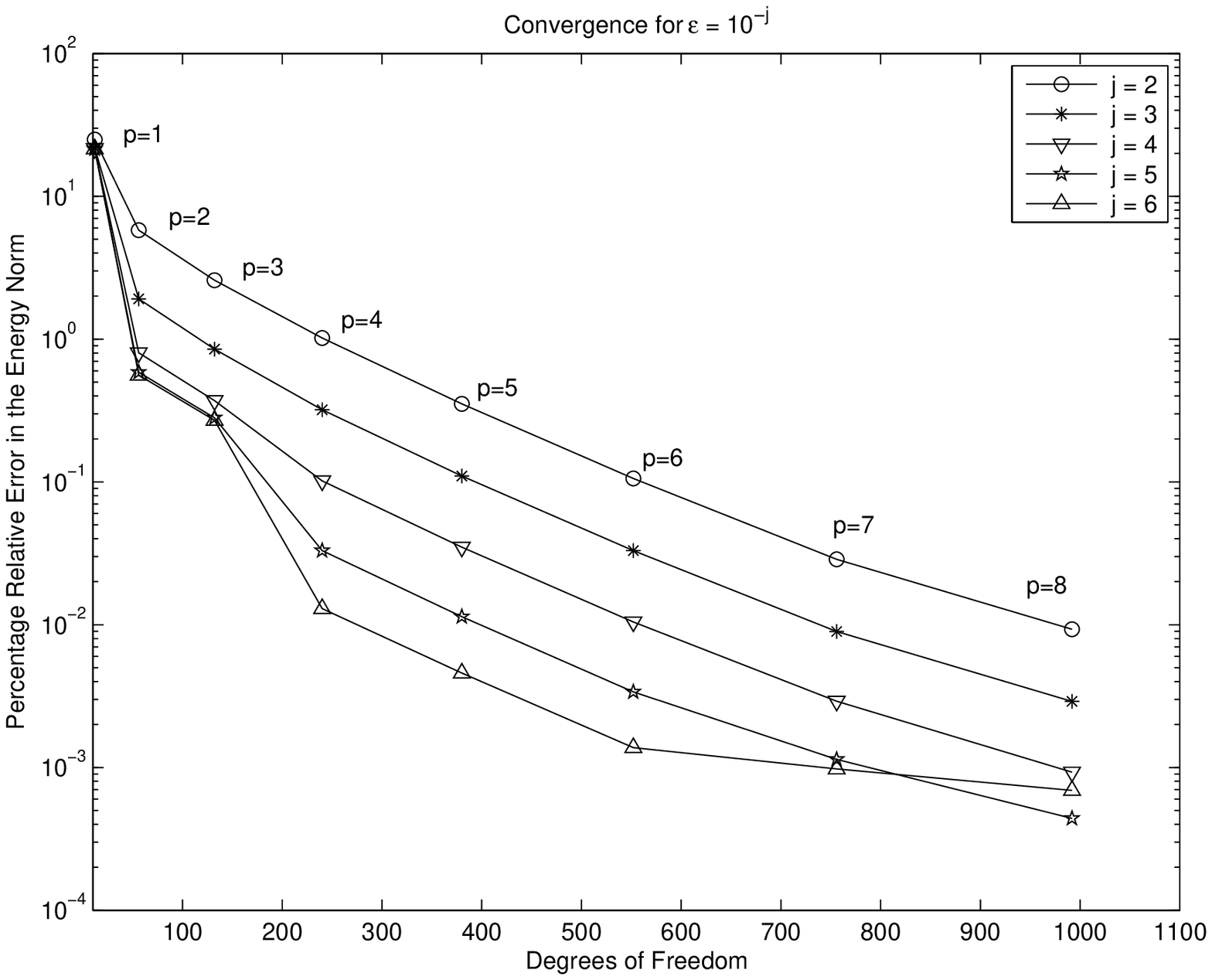,width=3in} 
\end{center}
\caption{Convergence of the approximate solution: loglog plot(left); semilog
plot (right).}
\end{figure}

The computations were performed with the commercial package StressCheck
(E.S.R.D., St. Louis, MO) which is a $p$-version FEM software package
allowing the polynomial degree to vary from $p=1$ to $p=8$ (on a fixed
mesh). Figure 4 below shows the approximate solution for $p=8, \varepsilon =
0.01$, and figure 5 shows the convergence (in the energy norm) as $p$ is
increased -- the exponential convergence is readily visible.

\section{Conclusions}

\label{conclusions} We have studied the finite element approximation of a
singularly perturbed transmission problem posed on a (smooth) domain with
analytic boundary. Upon obtaining appropriate regularity results, via
asymptotic expansions, we were able to design and analyze an $hp$ finite
element method for the robust approximation of the solution to the
singularly perturbed transmission problem. \ We showed that under the
assumption of analytic data, our method converges at an exponential rate,
independently of the singular perturbation parameter. This is in line with
our one-dimensional results \cite{nx}, as well as with two-dimensional
results for non-transmission problems \cite{ms}. The approximation of
singularly perturbed transmission problems on non-smooth domains is the
focus of our current research efforts.


\begin{thebibliography}{99}
\bibitem{AF} D. N. Arnold and R. S. Falk, Asymptotic analysis of the
boundary layer for the Reissner-Mindlin plate model, SIAM\ J. Math. Anal.,
No 27 (1996),486--514.



\bibitem{ciarlet:78} {P. G. Ciarlet}, \emph{The finite element method for
elliptic problems}, {North-Holland}, {Amsterdam}, 1978.

\bibitem{GLC} M. Costabel, M. Dauge and S. Nicaise, \emph{Corner
singularities and analytic regularity of linear elliptic systems}, in
preparation (2011).

\bibitem{mn} A. Maghnouji and S. Nicaise, \emph{Boundary layers for
transmission problems with singularities}, Electronic J. Diff. Eqs, \textbf{%
2006}, No. 14 (2006), 1--16.

\bibitem{melenk} J. M. Melenk, \emph{On the robust exponential convergence
of hp finite element methods for problems with boundary layers}, IMA J. Num.
Anal., No. 17 (1997) 577--601.

\bibitem{melenkB} J. M. Melenk, \emph{hp-finite element methods for singular
perturbations}, Lecture notes in Mathematics 1796, Springer, 2002.

\bibitem{ms} J. M. Melenk and C. Schwab, \emph{hp FEM\ for
Reaction-Diffusion equations I: Robust expontial convergence}, SIAM J. Num.
Anal., No. 4 (1998) 1520--1557.

\bibitem{ms1} J. M. Melenk and C. Schwab, \emph{Analytic regularity for a
singularly perturbed problem}, SIAM J. Math. Anal., No. 2 (1999) 379--400.

\bibitem{miller:96} {J. J. H. Miller, E. O'Riordan and G.I. Shishkin}, 
\textit{Fitted numerical methods for singularly perturbed problems. Error
Estimates in the maximum norm for linear problems in one and two dimensions}%
, World Scientific Publications, Singapore, 1996.

\bibitem{morton:96} {K. W. Morton}, \textit{Numerical Solution of
Convection-Diffusion Problems}, Chapman \& Hall, Applied Mathematics and
Mathematical Computation 12, London, 1996.

\bibitem{roos:96} {H.-G. Roos and M. Stynes and L. Tobiska}, \textit{%
Numerical methods for singularly perturbed differential equations.
Convection-diffusion and flow problems},

\bibitem{nx} S. Nicaise and C. Xenophontos, \emph{Finite element methods for
a singularly perturbed transmission problem}, J. Numer. Math. \textbf{17},
No. 4 (2010) 245--275.

\bibitem{schwab} C. Schwab, \emph{p- and hp- Finite Element Methods}, Oxford
Science Publications, 1998.

\bibitem{ss} C. Schwab and M. Suri, \emph{The p and hp versions of the
finite element method for problems with boundary layers}, Math. Comp. 
\textbf{65} (1996) 1403--1429.

\end{thebibliography}
\end{document}